\numberwithin{equation}{section}
\theoremstyle{plain}
\newtheorem{theorem}{Theorem}[section]
\newtheorem{lemma}[theorem]{Lemma}
\newtheorem{proposition}[theorem]{Proposition}
\theoremstyle{definition}
\newtheorem{definition}[theorem]{Definition}
\theoremstyle{remark}
\newtheorem{remark}[theorem]{Remark}
\newcommand{\Vol}{\operatorname{Vol}}
\newcommand{\diam}{\operatorname{diam}}
\newcommand{\dist}{\operatorname{dist}_g}
\newcommand{\KM}{K_M^{s}}           
\newcommand{\KMeps}{K_{M,\varepsilon}^{s}}
\newcommand{\Kps}{K_{p}^{s}}
\newcommand{\avg}[1]{\overline{#1}}
\begin{document}
\title{Sharp Fractional Sobolev Embeddings on Closed Manifolds}
\author{Hao Tan$^{1}$, Zetian Yan$^{2}$, Zhipeng Yang$^{1}$\thanks{Corresponding author:yangzhipeng326@163.com. }\\
        {\small $^{1}$ Department of Mathematics, Yunnan Normal University, Kunming, China.}\\
        {\small $^{2}$ Department of Mathematics, UC Santa Barbara, Santa Barbara, CA, USA}}

\date{} \maketitle

\begin{abstract}
We develop an intrinsic, heat-kernel based fractional Sobolev framework on closed Riemannian manifolds and study the critical fractional Sobolev embedding. We determine the optimal coefficient of the lower-order $L^{p}$ term and prove that the fully sharp $p$-power inequality cannot hold globally in the superquadratic range. We further establish an almost sharp inequality whose leading constant is arbitrarily close to the Euclidean best constant, and we derive improved inequalities under finitely many orthogonality constraints with respect to sign-changing test families.

\par
\smallskip
\noindent {\bf  Keywords}: Fractional Laplacian; closed Riemannian manifolds; Sobolev inequality.
		
\smallskip
\noindent {\bf MSC2020}: 35R01, 35R11, 35A15.
\end{abstract}

\tableofcontents

\section{Introduction and main results}\label{sec:intro}
Nonlocal models have become central in geometric analysis and in continuum physics.
On the geometric side, nonlocal minimal surfaces replace the classical perimeter by the $s$--perimeter
\begin{equation*}
	\mathrm{Per}_s(E;\Omega)
	=
	\iint_{(E\cap\Omega)\times(E^c)}\frac{dx\,dy}{|x-y|^{n+2s}}
	+
	\iint_{(E\setminus\Omega)\times(\Omega\setminus E)}\frac{dx\,dy}{|x-y|^{n+2s}},
	\qquad s\in(0,1).
\end{equation*}
Its first variation yields the nonlocal mean curvature
\begin{equation*}
	H_s[E](x)
	=
	c_{n,s}\,\mathrm{PV}\int_{\mathbb{R}^n}\frac{\chi_{E^c}(y)-\chi_E(y)}{|x-y|^{n+2s}}\,dy,
	\qquad x\in\partial E,
\end{equation*}
a framework initiated in \cite{Caffarelli-Roquejoffre-Savin2010} and systematically developed in \cite{Bucur-Valdinoci2016}.
On a closed Riemannian manifold $(M,g)$, it is natural to replace $|x-y|$ by the geodesic distance $d_g(x,y)$, or, in a coordinate-free spirit that fits analysis and probability on $(M,g)$, to use the heat kernel $K_M(t,x,y)$ to define intrinsic nonlocal energies \cite{Caselli-Florit-Simon-Serra-2024}.

On the physical side, anomalous diffusion and L\'evy flights lead to the fractional heat equation
\begin{equation*}
	\partial_t u + (-\Delta)^s u = 0,
	\qquad s\in(0,1),
\end{equation*}
and fractional quantum mechanics leads to the fractional Schr\"odinger equation
\begin{equation*}
	i\,\partial_t \psi = (-\Delta)^s \psi + V\psi,
\end{equation*}
see \cite{Metzler-Klafter2000,Laskin2002}. In peridynamics, nonlocal elasticity postulates bond-based interaction energies of the type
\begin{equation*}
	\mathcal{E}[u]
	=
	\frac{1}{2}\iint_{\mathbb{R}^n\times\mathbb{R}^n}
	\Phi\!\left(\frac{|u(x)-u(y)|}{|x-y|}\right)\rho_\delta(|x-y|)\,dx\,dy,
\end{equation*}
where $\rho_\delta$ is a long-range kernel \cite{Silling2000}. On manifolds, the spectral and semigroup calculus defines $(-\Delta_g)^s$ by
\begin{equation*}
	(-\Delta_g)^s u
	=
	\frac{1}{\Gamma(-s)}\int_0^\infty \bigl(e^{t\Delta_g}u-u\bigr)\frac{dt}{t^{1+s}}
	=
	\sum_{k\ge0}\lambda_k^{s}\,\langle u,\varphi_k\rangle\,\varphi_k,
\end{equation*}
linking fractional diffusion to subordinate Brownian motion and to the heat kernel; cf. \cite{Caffarelli-Silvestre2007,Stinga-Torrea2010}.

When the ambient space is a closed (compact, without boundary) Riemannian manifold $(M,g)$, the lack of translation invariance and the presence of curvature force one to rethink the very definition of fractional objects. In particular, extending the Euclidean fractional Sobolev framework to manifolds in a coordinate-free, geometrically natural way is a prerequisite for importing nonlocal tools into geometric analysis on $(M,g)$, including applications to nonlocal isoperimetry, phase transitions, and fractional curvature flows.

Let $s\in(0,1)$ and $p\in[1,\infty)$. Following the heat-kernel approach (Section~2), we define
\begin{equation}\label{eq1.1}
	K_p^s(x,y)
	=
	c_{s,p}\int_0^\infty K_M(t,x,y)\frac{dt}{t^{1+\frac{sp}{2}}},
	\qquad x\ne y,
\end{equation}
and the intrinsic seminorm
\begin{equation}\label{eq1.2}
	[u]_{W^{s,p}(M)}^{p}
	=
	\iint_{M\times M}|u(x)-u(y)|^p K_p^s(x,y)\,d\mu(x)\,d\mu(y).
\end{equation}
We set
\[
W^{s,p}(M)=\{u\in L^p(M):[u]_{W^{s,p}(M)}<\infty\}.
\]
On closed $(M,g)$ there exist constants $C_1,C_2>0$ such that for all $x\ne y$,
\[
C_1 d_g(x,y)^{-(n+sp)}\le K_p^s(x,y)\le C_2 d_g(x,y)^{-(n+sp)}.
\]
Consequently, \eqref{eq1.2} is equivalent to the geodesic Gagliardo seminorm; compare \cite{Caselli-Florit-Simon-Serra-2024,kreuml-Mordhorst2019,Rey-Saintier2024}.

Within this framework, we show that $W^{s,p}(M)$ is a Banach space. Moreover, it is separable for $1\le p<\infty$ and reflexive for $1<p<\infty$. It also satisfies a fractional Poincar\'e inequality and Sobolev-type embedding results (see Section~4.1).

On the other hand, the sharp fractional Sobolev inequality for the quadratic case $p=2$ in the Euclidean setting reads
\begin{equation}\label{eq1.3}
	\|u\|_{L^{2_s^*}(\mathbb{R}^n)}^2
	\le
	K(n,s,2)\iint_{\mathbb{R}^n\times\mathbb{R}^n}\frac{|u(x)-u(y)|^2}{|x-y|^{n+2s}}\,dx\,dy,
	\qquad
	2_s^*=\frac{2n}{n-2s},
\end{equation}
with the sharp constant $K(n,s,2)$, attained by the standard fractional bubbles; see \cite{DiNezza-Palatucci-Valdinoci2012}.
For local (gradient) inequalities on manifolds, the foundational works of Aubin, Hebey, Druet and Bakry established Euclidean sharpness of the leading constant, the structure and closure of the best lower-order term, and orthogonality improvements; see \cite{Aubin1976,Hebey1999,Druet1998,Bakry1994}.
In the fractional setting on compact manifolds, recent contributions include intrinsic characterizations of $W^{s,p}(M)$ and nonlocal inequalities for equations on $(M,g)$; see \cite{kreuml-Mordhorst2019,Rey-Saintier2024,Caselli-Florit-Simon-Serra-2024}.

We investigate optimal fractional Sobolev embeddings on closed $(M,g)$ in an intrinsic framework.
Assume $sp<n$ and set
\[
p_s^*=\frac{np}{n-sp}.
\]
For $u\in W^{s,p}(M)$ we consider the two standard formulations
\begin{equation}\label{eq1.4}
\|u\|_{L^{p_s^*}(M)}\le A[u]_{W^{s,p}(M)}+B\|u\|_{L^p(M)},
\end{equation}
and
\begin{equation}\label{eq1.4p}
\|u\|_{L^{p_s^*}(M)}^{p}\le \widetilde A[u]_{W^{s,p}(M)}^{p}+\widetilde B\|u\|_{L^p(M)}^{p}.
\end{equation}
We define $\beta_p(M)$ and $\overline\beta_p(M)$ as the best (infimal) constants $B$ and $\widetilde B$ in \eqref{eq1.4} and \eqref{eq1.4p}, respectively, in the spirit of Hebey \cite{Hebey1996}.
We refer to this as the $\mathcal B$--program. We also study the $\mathcal A$--program, namely the sharp leading constant and its improvement under orthogonality constraints.

Our analysis yields complete answers for the $\mathcal B$--program and sharp leading constants for the $\mathcal A$--program on closed manifolds.

\begin{theorem}\label{thm1.1}
\medskip\noindent\textbf{The $\mathcal B$--program.}
\begin{itemize}
	\item[(B1)] If $n>sp$ and $1\le p<\infty$, then
	\[
	\beta_p(M)=\Vol(M)^{-s/n},
	\qquad
	\overline\beta_p(M)=\Vol(M)^{-sp/n}.
	\]
	\item[(B2)] For the linear form in \eqref{eq1.4}, the set of admissible constants $B$ is closed at its infimum, and the optimal inequality holds with $B=\beta_p(M)$.
	\item[(B3)] If $n\ge2$, the $p$-power optimal inequality holds for every $p\in[1,2]$ with $sp<n$. When $n\ge3$ and $p\in(2,n)$, it may fail in general.
\end{itemize}
\end{theorem}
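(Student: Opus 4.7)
The plan is to: (i) deduce (B1) and (B2) together by testing against constants and a mean/oscillation decomposition; (ii) reduce the closure question in (B3) to a second-order Taylor expansion around constants whose outcome is controlled by the sign of $2-p$; and (iii) exhibit the failure in (B3) for $p>2$ by an Aubin-type bubble construction. The hard part will be step (ii), where the threshold at $p=2$ enters through a Sobolev--Poincaré estimate on mean-zero functions.

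First I would handle the lower bounds in (B1): substituting $u\equiv 1$ in \eqref{eq1.4} and \eqref{eq1.4p} kills the seminorm and the identity $\tfrac{1}{p_s^*}-\tfrac{1}{p}=-\tfrac{s}{n}$ forces $B\ge \Vol(M)^{-s/n}$ and $\widetilde B\ge\Vol(M)^{-sp/n}$. For the upper bound in the linear form (which simultaneously yields (B2)), I would decompose $u=\avg u+v$ with $\avg u=\Vol(M)^{-1}\int_M u\,d\mu$, so $[u]_{W^{s,p}(M)}=[v]_{W^{s,p}(M)}$. Combining the fractional Poincaré inequality with the non-sharp embedding $W^{s,p}(M)\hookrightarrow L^{p_s^*}(M)$ of Section~4.1 produces a constant $K$ with $\|v\|_{L^{p_s^*}(M)}\le K[v]_{W^{s,p}(M)}$ on mean-zero $v$, and Minkowski in $L^{p_s^*}$ together with the Hölder bound $|\avg u|\le \Vol(M)^{-1/p}\|u\|_{L^p(M)}$ give
\[
\|u\|_{L^{p_s^*}(M)}\le K[u]_{W^{s,p}(M)}+\Vol(M)^{-s/n}\|u\|_{L^p(M)},
\]
so $\beta_p(M)=\Vol(M)^{-s/n}$ and (B2) follow. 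The Young inequality $(a+b)^p\le(1+\varepsilon)a^p+C_\varepsilon b^p$ and Jensen's bound $|\avg u|^p\Vol(M)^{p/p_s^*}\le\Vol(M)^{-sp/n}\|u\|_{L^p(M)}^p$ transfer this to the $p$-power form and yield $\overline\beta_p(M)\le\Vol(M)^{-sp/n}$, completing (B1).

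For the positive part of (B3), the case $p=1$ is immediate from (B2). For $p\in(1,2]$ I would argue by contradiction: a failure of admissibility at $\widetilde B=\Vol(M)^{-sp/n}$ would produce a sequence $u_k$ with $\|u_k\|_{L^{p_s^*}(M)}=1$ and $1>k[u_k]_{W^{s,p}(M)}^p+\Vol(M)^{-sp/n}\|u_k\|_{L^p(M)}^p$, so $[u_k]_{W^{s,p}(M)}\to 0$ and (by the mean-zero Sobolev estimate) $v_k:=u_k-\avg{u_k}\to 0$ in $L^{p_s^*}(M)$ with $|\avg{u_k}|\to \Vol(M)^{-1/p_s^*}$. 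Taylor-expanding $\int_M|u_k|^{p_s^*}\,d\mu=1$ and $\int_M|u_k|^p\,d\mu$ about the constant $\avg{u_k}$ (the linear terms vanish by $\int_M v_k\,d\mu=0$) and matching the quadratic contributions yields
\[
1-\Vol(M)^{-sp/n}\|u_k\|_{L^p(M)}^p=\frac{p(p_s^*-p)}{2}\Vol(M)^{-(p_s^*-2)/p_s^*}\|v_k\|_{L^2(M)}^2+o(\|v_k\|_{L^2(M)}^2).
\]
The strict inequality thus forces $\|v_k\|_{L^2(M)}^2\gtrsim k[v_k]_{W^{s,p}(M)}^p$, while Poincaré--Hölder interpolation through the mean-zero Sobolev embedding gives $\|v_k\|_{L^2(M)}^2\le C[v_k]_{W^{s,p}(M)}^2$, so $[v_k]_{W^{s,p}(M)}^{2-p}\gtrsim k\to\infty$ --- impossible precisely when $2-p\ge 0$, so admissibility holds for $p\in[1,2]$.

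For the negative part of (B3), the same expansion is consistent when $p>2$ (the exponent $2-p<0$ reverses direction), so failure must be exhibited by an explicit family. Following Druet's strategy, on a closed manifold of dimension $n\ge 3$ with a point $x_0$ of strictly positive scalar curvature (for instance a round sphere) I would test the Aubin-type family $u_\varepsilon(x)=\eta(x)\phi_\varepsilon(d_g(x,x_0))$ obtained by rescaling a Euclidean fractional Sobolev extremizer; expanding $\|u_\varepsilon\|_{L^{p_s^*}(M)}^p$ and $[u_\varepsilon]_{W^{s,p}(M)}^p$ to second order in $\varepsilon$ while $\|u_\varepsilon\|_{L^p(M)}^p=O(\varepsilon^{sp})$ remains subleading produces a scalar-curvature correction whose sign in the superquadratic range is incompatible with the sharp choice $\widetilde B=\Vol(M)^{-sp/n}$, exhibiting a sequence that violates any inequality with this $\widetilde B$. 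The principal obstacle throughout is this third step: the quadratic bookkeeping must be carried out precisely enough to isolate the sign of $2-p$, and in the small-$p$ regime (where $p_s^*<2$) controlling $\|v_k\|_{L^2(M)}^2$ by $[v_k]_{W^{s,p}(M)}^2$ may require a refined interpolation beyond the standard mean-zero Sobolev embedding.
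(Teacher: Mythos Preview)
Your treatment of (B1) and (B2) matches the paper's (Theorem~4.13): test with constants for the lower bound, and for the upper bound decompose $u=\avg u+v$ and absorb $v$ via the fractional Poincar\'e inequality combined with the non-sharp embedding.

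For (B3), however, both halves of your proposal diverge from the paper, and each has a genuine gap.

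\emph{Positive part $(1\le p\le2)$.} The paper (Theorem~4.14) gives a direct proof: for $p_s^*\ge2$ it establishes the Bakry-type convexity bound
\[
\Bigl(\int_M|u|^{p_s^*}\Bigr)^{2/p_s^*}
\le V^{-2(p_s^*-1)/p_s^*}\Bigl|\int_M u\Bigr|^2
+(p_s^*-1)\Bigl(\int_M|u-u_M|^{p_s^*}\Bigr)^{2/p_s^*}
\]
by differentiating $t\mapsto\|1+tv\|_{L^{p_s^*}}^2$ twice and using H\"older, then raises to the power $p/2\le1$; for $p_s^*\le2$ it uses elementary power inequalities instead. Your contradiction-plus-Taylor route is not complete as written: the remainder $o(\|v_k\|_{L^2}^2)$ in the expansion of $\int_M|\avg{u_k}+v_k|^q\,d\mu$ is not justified without some $L^\infty$-smallness of $v_k$ (you only have $v_k\to0$ in $L^{p_s^*}$), and when $p_s^*<2$ both the second-order expansion (the map $r\mapsto|r|^{p_s^*}$ is not $C^2$) and the control $\|v_k\|_{L^2}\le C[v_k]_{W^{s,p}}$ break down. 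You flag the second issue but do not resolve it; the paper's case split avoids it entirely.

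\emph{Negative part $(2<p<n)$.} Here your approach is the wrong one. The paper (Theorem~4.15) tests the family $u_\varepsilon=1+\varepsilon u$ for a fixed nonconstant smooth $u$: since $[u_\varepsilon]_{W^{s,p}}^p=\varepsilon^p[u]_{W^{s,p}}^p=o(\varepsilon^2)$ precisely when $p>2$, the seminorm disappears from the $\varepsilon^2$-balance, and matching the $\varepsilon^2$-coefficients of $\|u_\varepsilon\|_{L^{p_s^*}}^p$ and $\Vol(M)^{-sp/n}\|u_\varepsilon\|_{L^p}^p$ forces $\int_M u^2\le\Vol(M)^{-1}(\int_M u)^2$, the equality case of Cauchy--Schwarz---impossible for nonconstant $u$. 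This works on \emph{every} closed manifold and is elementary. Your bubble construction with scalar-curvature corrections is far more delicate, requires the curvature expansion of the fractional energy (not developed in the paper), and at best would only exhibit failure on manifolds admitting a point of positive scalar curvature, which is weaker than what Theorem~4.15 actually proves.
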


\begin{theorem}\label{thm1.2}
\medskip\noindent\textbf{The $\mathcal A$--program.}
Assume $n>sp$ and set $p_s^*=\frac{np}{n-sp}$.
\begin{itemize}
	\item[(A1)] For every $\varepsilon>0$ there exists $B_\varepsilon$ such that
	\[
	\|u\|_{L^{p_s^*}(M)}^p
	\le
	\bigl(K(n,s,p)+\varepsilon\bigr)[u]_{W^{s,p}(M)}^p
	+
	B_\varepsilon\|u\|_{L^p(M)}^p,
	\qquad u\in W^{s,p}(M),
	\]
	where $K(n,s,p)$ is the Euclidean best constant. In particular, the leading constant is Euclidean-sharp on any closed $(M,g)$.
	\item[(A2)] Let $f_i\in C^1(M)$, $i=1,\ldots,N$, be sign-changing functions such that
	\[
	\sum_{i=1}^N |f_i|^p\equiv 1 \quad \text{on } M.
	\]
	If, in addition, $u$ satisfies the orthogonality conditions
	\[
	\int_M f_i|f_i|^{p_s^*-1}|u|^{p_s^*}\,d\mu=0,
	\qquad i=1,\ldots,N,
	\]
	then the leading constant improves by the factor $2^{-sp/n}$: for every $\varepsilon>0$ there exists $B_{\varepsilon,\{f_i\}}$ such that
	\[
	\|u\|_{L^{p_s^*}(M)}^p
	\le
	\Bigl(\frac{K(n,s,p)}{2^{sp/n}}+\varepsilon\Bigr)[u]_{W^{s,p}(M)}^{p}
	+
	B_{\varepsilon,\{f_i\}}\|u\|_{L^p(M)}^p.
	\]
\end{itemize}
\end{theorem}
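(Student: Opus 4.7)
The plan is to prove both parts by contradiction, running a concentration--compactness scheme adapted to the intrinsic heat-kernel framework of Section~2. The argument rests on three pillars: the Rellich compactness $W^{s,p}(M)\hookrightarrow L^{p}(M)$ from Section~4; the kernel equivalence $K_p^s(x,y)\asymp d_g(x,y)^{-(n+sp)}$, which lets me replace the intrinsic seminorm by its Euclidean counterpart on a small geodesic ball up to an $\eta$-distortion controlled by the chart radius; and the sharp Euclidean fractional Sobolev inequality with best constant $K(n,s,p)$.

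\smallskip
\emph{Proof of (A1).} If the inequality fails for some $\varepsilon_0>0$, I extract $u_j\in W^{s,p}(M)$ with $\|u_j\|_{L^{p_s^*}(M)}=1$, $\|u_j\|_{L^p(M)}\to 0$, and $[u_j]_{W^{s,p}(M)}^p\le(K(n,s,p)+\varepsilon_0)^{-1}+o(1)$. Rellich gives $u_j\to 0$ in $L^p(M)$ and a.e.\ along a subsequence. A Lions-type concentration principle, adapted to the intrinsic seminorm via cutoff localization of $K_p^s$, yields (along a further subsequence) $|u_j|^{p_s^*}\,d\mu\rightharpoonup \nu=\sum_k\alpha_k\delta_{x_k}$ in duality with $C^0(M)$, with $\sum_k\alpha_k=1$; the a.e.\ vanishing rules out a regular part. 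For each atom $x_k$ pick $\chi_k\in C_c^\infty$ supported in a normal chart where $g$ is $(1+\eta)$-close to Euclidean, transfer $\chi_k u_j$ to $\mathbb{R}^n$, and combine the Euclidean sharp inequality with the Leibniz-type estimate
\[
[\chi_k u_j]_{W^{s,p}(M)}^p\le(1+C\eta)\|\chi_k\|_\infty^p[u_j]_{W^{s,p}(M)}^p+C_\eta\|u_j\|_{L^p(M)}^p
\]
and the kernel comparison; the limit $j\to\infty$ gives $\alpha_k^{p/p_s^*}\le(K(n,s,p)+O(\eta))\beta_k+o(1)$, where $\beta_k=\lim_j[\chi_k u_j]_{W^{s,p}(M)}^p$. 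Summing over atoms (cross interactions vanish since $K_p^s$ is bounded on disjoint neighborhoods) and using $\alpha_k^{p/p_s^*}\ge\alpha_k$:
\[
1=\sum_k\alpha_k\le\sum_k\alpha_k^{p/p_s^*}\le(K(n,s,p)+O(\eta))[u_j]_{W^{s,p}(M)}^p+o(1),
\]
which contradicts the upper bound on $[u_j]^p$ once $\eta$ is chosen small enough.

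\smallskip
\emph{Proof of (A2).} I run the same scheme targeting $K(n,s,p)/2^{sp/n}$. The new step is to pass orthogonality to the limit: since $f_i|f_i|^{p_s^*-1}\in C^{0}(M)$,
\[
\sum_k\alpha_k\,f_i(x_k)|f_i(x_k)|^{p_s^*-1}=0,\qquad i=1,\dots,N.
\]
Set $\beta:=p/p_s^*\in(0,1)$ and $V_{k,i}:=\alpha_k f_i(x_k)|f_i(x_k)|^{p_s^*-1}$; then $|V_{k,i}|^\beta=\alpha_k^\beta|f_i(x_k)|^p$, and the normalization $\sum_i|f_i(x_k)|^p=1$ reads $\sum_i|V_{k,i}|^\beta=\alpha_k^\beta$. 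A single-atom limit is ruled out (it would force $V_{1,i}=0$ for all $i$, hence $\alpha_1=0$). For $m\ge 2$, subadditivity of $|\cdot|^\beta$ on $\mathbb R$ applied to $\sum_k V_{k,i}=0$ gives $|V_{k,i}|^\beta\le\sum_{j\ne k}|V_{j,i}|^\beta$; summing over $i$ and writing $S:=\sum_k\alpha_k^\beta$, this becomes $\alpha_k^\beta\le S-\alpha_k^\beta$, so $\alpha_k\le(S/2)^{1/\beta}$ for every $k$. Hence
\[
1=\sum_k\alpha_k=\sum_k\alpha_k^\beta\cdot\alpha_k^{1-\beta}\le(S/2)^{(1-\beta)/\beta}\sum_k\alpha_k^\beta=(S/2)^{(1-\beta)/\beta}\,S,
\]
which rearranges to $S\ge 2^{1-\beta}=2^{sp/n}$. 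Substituting into the chain from (A1) yields
\[
2^{sp/n}\le\sum_k\alpha_k^{p/p_s^*}\le (K(n,s,p)+O(\eta))[u_j]_{W^{s,p}(M)}^p+o(1),
\]
contradicting the improved upper bound on $[u_j]^p$ for $\eta$ small.

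\smallskip
I expect the main obstacle to be the intrinsic concentration--compactness principle: the localization of $K_p^s$, the Leibniz-type estimate for the heat-kernel seminorm, and the vanishing of cross-atom bilinear interactions all have to be verified directly from the heat-kernel definition rather than imported from the Gagliardo formulation, and the purely atomic structure of the defect measure must be established in the nonlocal setting on $(M,g)$. By contrast, the arithmetic lemma driving (A2) is short: once the limit of orthogonality is identified, $\sum_k\alpha_k^{p/p_s^*}\ge 2^{sp/n}$ follows from the subadditivity of $|\cdot|^\beta$ combined with $\sum_k V_{k,\cdot}=0$, and the improvement factor $2^{-sp/n}$ is read off directly from this quasi-triangle bound.
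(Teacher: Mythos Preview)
Your approach to (A1) is essentially the paper's: argue by contradiction, normalize, extract a weakly null sequence with $\|u_j\|_{L^p}\to0$, and run concentration--compactness on the pair of defect measures $(\nu,\sigma)$ to obtain $\nu_k^{p/p_s^*}\le K(n,s,p)\,\sigma_k$ at each atom, leading to the contradiction $1\le K(n,s,p)\,\sigma(M)<1$. The paper organizes the localization step slightly differently (it works directly with the energy measure $\sigma$ rather than with $\beta_k=\lim_j[\chi_k u_j]^p$, which sidesteps the ``cross-interaction'' bookkeeping you flag), but the architecture is the same.

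For (A2), however, your route is genuinely different from the paper's. The paper does \emph{not} re-run concentration--compactness; instead it gives a short direct argument on top of (A1). Writing $f_i=f_{i,+}-f_{i,-}$, the orthogonality \eqref{eq4.41} forces $\int_M(f_{i,+})^{p_s^*}|u|^{p_s^*}=\int_M(f_{i,-})^{p_s^*}|u|^{p_s^*}=:A_i$, whence $\|f_iu\|_{L^{p_s^*}}^p=(2A_i)^{p/p_s^*}=2^{-sp/n}\bigl(\|f_{i,+}u\|_{L^{p_s^*}}^p+\|f_{i,-}u\|_{L^{p_s^*}}^p\bigr)$. Applying (A1) to each $f_{i,\pm}u$, using a Leibniz estimate to pull out $|f_i(x)|^p$, summing over $i$ with $\sum_i|f_i|^p\equiv1$, and invoking the triangle inequality in $L^{p_s^*/p}$ to bound $\|u\|_{L^{p_s^*}}^p\le\sum_i\|f_iu\|_{L^{p_s^*}}^p$ gives (A2) directly. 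The factor $2^{-sp/n}$ thus appears \emph{exactly}, from the single identity $(2A)^{p/p_s^*}=2^{1-sp/n}A^{p/p_s^*}$, rather than as the outcome of an arithmetic lemma on atoms. Your approach, by contrast, proves the interesting inequality $\sum_k\alpha_k^{p/p_s^*}\ge 2^{sp/n}$ for any atomic probability $\sum_k\alpha_k\delta_{x_k}$ satisfying the limiting orthogonality $\sum_k\alpha_k f_i(x_k)|f_i(x_k)|^{p_s^*-1}=0$; this is correct (your subadditivity/maximization chain is clean), and it is arguably more flexible if one wants to replace the two-sign splitting by more elaborate constraint families. The trade-off is that it requires redoing all the measure-theoretic machinery of (A1), whereas the paper's proof of (A2) is a five-line corollary once (A1) is in hand.
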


\begin{remark}\label{rem1.3}
The case $p=2$ corresponds to the main result of \cite{Rey-Saintier2024}. In that work, the authors obtain an almost sharp inequality by following the strategy of Aubin \cite{Aubin1976}, which relies on the classification of extremals in $\mathbb{R}^n$. Namely, up to scaling and translation, the standard fractional bubbles can be written as
\[
U_{\varepsilon,x_0}(x)
=
C_{n,s}\left(\frac{\varepsilon}{\varepsilon^2+|x-x_0|^2}\right)^{\frac{n-2s}{2}}.
\]
For $p\ne 2$, explicit formulas and a complete classification of extremals for the Euclidean sharp constant are not available in general.
In contrast, the approach in \cite{Hang2022,Yan2023} (and in the present work) is based on the concentration--compactness principle, which avoids the need for a full characterization of optimizers in $\mathbb{R}^n$.
\end{remark}

Theorems \ref{thm1.1} and \ref{thm1.2} describe how the geometry of a closed manifold influences fractional Sobolev embeddings: the leading nonlocal term is Euclidean in nature, while the manifold enters through the best lower-order $L^p$ term and through orthogonality constraints. The sharpness and closure properties extend the local manifold theory \cite{Aubin1976,Druet1998,Hebey1996,Bakry1994} to the fractional regime, complementing recent advances on nonlocal equations and inequalities on compact manifolds \cite{Rey-Saintier2024}. It is worth noting that in dimension $n=2$, the $p$-power optimal inequality in the $\mathcal B$ program holds on the range $p\in[1,2]$ in the fractional regime $s\in(0,1)$, thereby including the endpoint $p=2$. By contrast, in the local case the corresponding statement is valid only for $p\in[1,2)$.

Section~2 recalls basic geometric and heat-kernel facts on closed manifolds. Section~3 reviews three equivalent definitions of $(-\Delta_g)^s$ (spectral, semigroup/singular integral, and extension). Section~4 develops the intrinsic spaces $W^{s,p}(M)$, proves their core properties, and carries out the $\mathcal B$ and $\mathcal A$ programs stated above.

\section{Preliminaries on closed Riemannian manifolds}

In this section, we collect several elementary facts that will be used in the main estimates of the paper. For background on Riemannian geometry, we refer the reader to Chavel~\cite{Chavel1993}, do Carmo~\cite{DoCarmo1992}, Hebey~\cite{Hebey1996}, and Jost~\cite{Jost1995}. Standard references on the heat kernel include the monograph~\cite{Davies1989} and the survey~\cite{GrigorYan1999}.

\subsection{Laplace operator and eigenvalues}

In $\mathbb{R}^n$ the Euclidean Laplacian $\Delta$ acts by
\[
\Delta u=\sum_{i=1}^n \frac{\partial^2 u}{\partial (x^i)^2}.
\]
On a Riemannian manifold $(M^n,g)$, the Laplace--Beltrami operator $\Delta_g$ is given in local coordinates by
\begin{equation}\label{eq2.1}
\Delta_g u
=
\frac{1}{\sqrt{|g|}}
\sum_{i,j=1}^n \frac{\partial}{\partial x^i}
\left(\sqrt{|g|}\,g^{ij}\,\frac{\partial u}{\partial x^j}\right),
\qquad
|g|=\det(g_{ij}),
\qquad
(g^{ij})=(g_{ij})^{-1}.
\end{equation}
Throughout we adopt the sign convention that $\Delta_g\le 0$ on $L^2(M)$, so that $-\Delta_g$ is a nonnegative self-adjoint operator.

The Riemannian volume measure associated to $g$ is
\[
d\mu=\sqrt{|g|}\,dx^1\cdots dx^n,
\qquad |g|=\det(g_{ij}).
\]
By integration by parts, on a closed manifold $M^n$, we have
\begin{equation}\label{eq2.2}
\int_M v\,\Delta_g u\,d\mu
=
-\int_M \langle \nabla_g u,\nabla_g v\rangle_g\,d\mu
\end{equation}
for all $u,v\in C^\infty(M)$.

Consider the eigenvalue problem for $\Delta_g$
\[
-\Delta_g u=\lambda u,
\qquad u\in C^\infty(M).
\]
Taking $v\equiv 1$ in \eqref{eq2.2} gives
\[
\int_M \Delta_g u\,d\mu=0.
\]
Integrating the eigenvalue equation over $M$ yields
\[
0=\int_M \Delta_g u\,d\mu=-\lambda\int_M u\,d\mu,
\]
so if $\lambda\ne 0$ then $\int_M u\,d\mu=0$. Moreover, taking $v=u$ in \eqref{eq2.2} we obtain
\[
\lambda\int_M u^2\,d\mu
=
-\int_M u\,\Delta_g u\,d\mu
=
\int_M |\nabla_g u|_g^2\,d\mu
\ge 0,
\]
showing that all eigenvalues are nonnegative. Taking $u$ constant yields $\lambda_0=0$.

By the spectral theory of elliptic operators on closed manifolds, there exists an orthonormal basis $\{\varphi_k\}_{k=0}^{\infty}$ of $L^2(M)$ consisting of eigenfunctions of $-\Delta_g$ with eigenvalues
\[
0=\lambda_0<\lambda_1\le\lambda_2\le\cdots,\qquad \lambda_k\to+\infty.
\]
For $u\in L^2(M)$, writing $u=\sum_{k\ge0}u_k\varphi_k$ with $u_k=\langle u,\varphi_k\rangle_{L^2(M)}$, the heat semigroup satisfies, for every $t\ge 0$,
\begin{equation}\label{eq2.3}
e^{t\Delta_g}u=\sum_{k\ge0} e^{-t\lambda_k}u_k\,\varphi_k.
\end{equation}
In particular, $e^{t\Delta_g}$ is a bounded self-adjoint operator on $L^2(M)$.

\subsection{Heat kernels on closed Riemannian manifolds}
The purpose of this subsection is to give a brief introduction to heat kernels on a closed Riemannian manifold $(M,g)$.
We start from the Euclidean case. On $\mathbb{R}^n$, the heat kernel $p(t,x,y)$ is the fundamental solution to
\[
\partial_t u=\Delta u,
\qquad
u(0,\cdot)=\delta_y,
\]
and it is given explicitly by
\[
p(t,x,y)=\frac{1}{(4\pi t)^{n/2}}\exp\Big(-\frac{|x-y|^2}{4t}\Big),
\qquad t>0,\ x,y\in\mathbb{R}^n.
\]
Equivalently, for bounded continuous $f$, the Cauchy problem
\[
\partial_t u=\Delta u,
\qquad
u(0,x)=f(x),
\]
has the solution
\[
u(t,x)=\int_{\mathbb{R}^n} p(t,x,y)f(y)\,dy.
\]

On a closed manifold $(M,g)$, let $f\in L^2(M)$ and consider the initial value problem
\begin{equation}\label{eq2.4}
\begin{cases}
\partial_t u=\Delta_g u,\\
u(0,x)=f(x), \qquad x\in M,\ t>0.
\end{cases}
\end{equation}
We interpret \eqref{eq2.4} in the semigroup (mild) sense by setting
\begin{equation}\label{eq2.5}
u(t)=e^{t\Delta_g}f,
\qquad t\ge 0.
\end{equation}
Then $u\in C([0,\infty);L^2(M))$. Moreover, for every $t>0$ one has $u(t)\in \mathrm{Dom}(\Delta_g)$ and
$u\in C^1((0,\infty);L^2(M))$ with $\dot u(t)=\Delta_g u(t)$ in $L^2(M)$.

Rewriting $f$ in terms of an $L^2$-orthonormal basis $\{\varphi_k\}_{k\ge0}$, we have
\[
f=\sum_{k=0}^\infty a_k\varphi_k,
\qquad
a_k=\langle f,\varphi_k\rangle_{L^2(M)}.
\]
Using \eqref{eq2.3}, for $t>0$ we obtain
\[
\begin{aligned}
e^{t\Delta_g}f(x)
&=\sum_{k=0}^\infty a_k e^{-t\lambda_k}\varphi_k(x)\\
&=\sum_{k=0}^\infty e^{-t\lambda_k}\varphi_k(x)\int_M f(y)\overline{\varphi_k(y)}\,d\mu(y).
\end{aligned}
\]
For every $t>0$, the series
\[
\sum_{k=0}^\infty e^{-t\lambda_k}\varphi_k(x)\overline{\varphi_k(y)}
\]
converges in $C^\infty(M\times M)$ and defines the heat kernel $K_M(t,x,y)$. Therefore,
\[
e^{t\Delta_g}f(x)=\int_M K_M(t,x,y)f(y)\,d\mu(y),
\qquad t>0,
\]
and the mild solution to \eqref{eq2.4} is
\begin{equation}\label{eq2.6}
u(t,x)=\int_M K_M(t,x,y)f(y)\,d\mu(y),
\qquad t>0,
\end{equation}
which is another form of \eqref{eq2.5}. Hence $e^{t\Delta_g}$ admits the integral kernel $K_M(t,x,y)$.

\begin{proposition}\label{pro2.1}
The heat kernel $K_M$ satisfies the following properties.
\begin{itemize}
\item[(1)]For each fixed $y\in M$, the function $(t,x)\mapsto K_M(t,x,y)$ is smooth on $(0,\infty)\times M$ and solves
\[
\partial_t K_M(t,x,y)=\Delta_{g,x}K_M(t,x,y),
\qquad t>0,\ x\in M.
\]
Moreover, for every $\psi\in C(M)$,
\[
\int_M K_M(t,x,y)\psi(x)\,d\mu(x)\to \psi(y)
\qquad \text{as }t\to 0^+.
\]

\item[(2)]For all $t,s>0$ and all $x,y\in M$,
\[
K_M(t+s,x,y)=\int_M K_M(t,x,z)K_M(s,z,y)\,d\mu(z).
\]

\item[(3)]
For all $t>0$ and $x,y\in M$,
\[
K_M(t,x,y)=K_M(t,y,x).
\]
\end{itemize}
\end{proposition}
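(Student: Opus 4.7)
My plan is to derive all three assertions directly from the spectral expansion
\[
K_M(t,x,y)=\sum_{k\ge 0}e^{-t\lambda_k}\varphi_k(x)\overline{\varphi_k(y)},
\]
whose convergence in $C^\infty(M\times M)$ for every $t>0$ was already established in the paragraph preceding the statement. Since $-\Delta_g$ is a real self-adjoint elliptic operator on $M$, I would fix once and for all a \emph{real} $L^2$-orthonormal eigenbasis $\{\varphi_k\}$, so that $K_M$ is automatically real and each summand $\varphi_k(x)\varphi_k(y)$ is symmetric in $(x,y)$; assertion (3) is then immediate by inspection of the series.

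For (1), I plan to upgrade the $C^\infty(M\times M)$ convergence (for fixed $t>0$) to $C^\infty$ convergence in $(t,x)$ on every strip $[\tau,\infty)\times M$ with $\tau>0$. This rests on the standard observation that each $t$-derivative produces a factor $\lambda_k$, while $x$-derivatives of $\varphi_k$ are controlled through elliptic regularity together with Weyl's law for $\lambda_k$; the exponential $e^{-t\lambda_k}$ absorbs any resulting polynomial growth. Termwise differentiation plus the identity $\Delta_{g,x}\varphi_k=-\lambda_k\varphi_k$ then yield both smoothness and the heat equation $\partial_tK_M=\Delta_{g,x}K_M$. For the initial trace, I first test against a smooth $\psi=\sum_k b_k\varphi_k$: orthonormality gives
\[
\int_MK_M(t,x,y)\psi(x)\,d\mu(x)=\sum_{k\ge 0}e^{-t\lambda_k}b_k\varphi_k(y),
\]
and the right-hand side tends to $\psi(y)$ as $t\to 0^+$ uniformly on $M$, thanks to the rapid decay of the $b_k$. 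To pass from smooth $\psi$ to an arbitrary $\psi\in C(M)$, I would exploit the conservation identity $\int_MK_M(t,\cdot,y)\,d\mu=1$ (which follows from $\lambda_0=0$ and $\varphi_0\equiv\Vol(M)^{-1/2}$) together with positivity / $L^1$-contractivity of the heat semigroup, approximating $\psi$ uniformly by smooth functions. I expect this density step to be the main obstacle of the proof, since the spectral series alone only gives $L^2$-type information and does not automatically upgrade pointwise approximate-identity behaviour to merely continuous test functions.

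For (2), the Chapman--Kolmogorov identity, I would substitute both spectral expansions into the integrand on the right, interchange sum and integral (justified by uniform convergence on $M$), and apply orthonormality $\int_M\varphi_k\varphi_j\,d\mu=\delta_{kj}$ to collapse the resulting double sum to
\[
\sum_{k\ge 0}e^{-t\lambda_k}e^{-s\lambda_k}\varphi_k(x)\varphi_k(y)=\sum_{k\ge 0}e^{-(t+s)\lambda_k}\varphi_k(x)\varphi_k(y)=K_M(t+s,x,y).
\]
This is just the kernel form of the semigroup identity $e^{(t+s)\Delta_g}=e^{t\Delta_g}\circ e^{s\Delta_g}$ already available from the spectral calculus recalled earlier in the paper, so no genuinely new input is needed here beyond the bookkeeping.
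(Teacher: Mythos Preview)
The paper does not actually prove Proposition~\ref{pro2.1}: it is stated as a standard background fact, with the reader referred to the monographs of Chavel, Davies, Grigor'yan, Hebey, and Jost cited at the start of Section~2. Your spectral-expansion proof sketch is a perfectly standard and correct route to these properties, and is essentially what one finds in those references.

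One point worth flagging: in the density step for the initial trace in~(1), you invoke positivity of $K_M$ (or equivalently $L^\infty$-contractivity of the semigroup) to pass from smooth test functions to continuous ones. Positivity is \emph{not} visible from the spectral series alone and typically requires an independent argument (parabolic maximum principle, or the Beurling--Deny criterion for Dirichlet forms). You correctly identify this as the main obstacle; just be aware that it is a genuine extra ingredient, not something that falls out of the expansion. The paper sidesteps this entirely by treating all of Propositions~\ref{pro2.1} and~\ref{pro2.2} as known.
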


\begin{proposition}\label{pro2.2}
Let $(M,g)$ be a closed Riemannian manifold. Then:
\begin{itemize}
\item[(1)] The heat semigroup preserves constants:
\[
e^{t\Delta_g}1=1
\qquad \text{for all }t\ge 0.
\]

\item[(2)] The heat kernel has unit mass: for every $t>0$ and every $x\in M$,
\[
\int_M K_M(t,x,y)\,d\mu(y)=1.
\]

\item[(3)] Short-time Gaussian bounds.
There exist $t_0>0$ and constants $c,C>0$ depending only on $(M,g)$ such that for all $0<t\le t_0$ and all $x,y\in M$,
\[
\frac{c}{t^{n/2}}\exp\Big(-\frac{d_g(x,y)^2}{Ct}\Big)
\le
K_M(t,x,y)
\le
\frac{C}{t^{n/2}}\exp\Big(-\frac{d_g(x,y)^2}{ct}\Big).
\]

\item[(4)] Large-time behavior.
Let $\lambda_1>0$ be the first nonzero eigenvalue of $-\Delta_g$. Then there exists $C>0$ such that for all $t\ge 1$ and all $x,y\in M$,
\[
\Bigl|K_M(t,x,y)-\frac{1}{\Vol(M)}\Bigr|\le C e^{-\lambda_1 t}.
\]
In particular,
\[
0<K_M(t,x,y)\le \frac{1}{\Vol(M)}+C e^{-\lambda_1 t}.
\]
\end{itemize}
\end{proposition}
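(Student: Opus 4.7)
The plan is to derive (1), (2), and (4) directly from the spectral expansion of $K_M$ together with the basic properties in Proposition~\ref{pro2.1}, and to treat (3) via the classical Minakshisundaram--Pleijel parametrix on a closed manifold.

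For (1), I would use that $\Delta_g 1\equiv 0$, which forces $\lambda_0=0$ with normalized eigenfunction $\varphi_0\equiv \Vol(M)^{-1/2}$. Applying the spectral formula \eqref{eq2.3} to $u\equiv 1$, every $k\ge 1$ mode vanishes by orthogonality of $\varphi_k$ to constants, and the $k=0$ term equals $\langle 1,\varphi_0\rangle\varphi_0=1$. Item (2) is then immediate from the integral representation \eqref{eq2.6}: evaluating $e^{t\Delta_g}1\equiv 1$ at $x$ yields $\int_M K_M(t,x,y)\,d\mu(y)=1$.

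For (4), I would start from the pointwise spectral identity
\[
K_M(t,x,y)-\frac{1}{\Vol(M)}=\sum_{k\ge 1}e^{-t\lambda_k}\varphi_k(x)\varphi_k(y),\qquad t>0,
\]
using $\varphi_0\equiv \Vol(M)^{-1/2}$. For $t\ge 1$, I split $e^{-t\lambda_k}=e^{-(t-1)\lambda_k}e^{-\lambda_k}$ and use $\lambda_k\ge\lambda_1$ to obtain $e^{-(t-1)\lambda_k}\le e^{-(t-1)\lambda_1}$. Cauchy--Schwarz in $k$ then yields
\[
\Bigl|K_M(t,x,y)-\frac{1}{\Vol(M)}\Bigr|\le e^{-(t-1)\lambda_1}\,K_M(1,x,x)^{1/2}K_M(1,y,y)^{1/2},
\]
and compactness of $M$ together with smoothness of $K_M(1,\cdot,\cdot)$ on the diagonal bounds the right-hand factor by a uniform constant, giving the announced exponential decay. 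The strict positivity $K_M>0$ used in the last clause then follows from the short-time lower bound in (3) together with the semigroup identity in Proposition~\ref{pro2.1}(2), which propagates positivity to all $t>0$.

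The main obstacle is (3), the classical short-time Gaussian bound on a closed manifold. I would carry out the standard parametrix construction: in a geodesic normal neighborhood around $y\in M$ of radius less than $\inj(M,g)$, set
\[
H_N(t,x,y)=\frac{\exp\!\bigl(-d_g(x,y)^2/(4t)\bigr)}{(4\pi t)^{n/2}}\sum_{j=0}^N u_j(x,y)\,t^j,
\]
with the coefficients $u_j$ determined inductively by transport equations along radial geodesics emanating from $y$. A direct computation shows $(\partial_t-\Delta_{g,x})H_N$ is of order $t^{N-n/2}$ with a Gaussian remainder; gluing with a smooth cutoff and correcting by Duhamel's principle yields $K_M$ with matching Gaussian upper and lower bounds for $0<t\le t_0$, where $t_0$ is controlled by $\inj(M,g)$ and the curvature. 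Compactness of $M\times M$ then delivers uniform constants $c,C>0$. As an alternative route I could invoke the Li--Yau Harnack inequality under the two-sided Ricci bound automatic on a closed manifold and integrate along space--time paths to recover the same two-sided Gaussian bounds. Rather than reproducing the technical estimates, I would cite \cite{Chavel1993,Davies1989,GrigorYan1999} for full details.
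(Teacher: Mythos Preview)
Your proposal is correct and, in fact, provides more detail than the paper does: Proposition~\ref{pro2.2} is stated in the preliminaries section without proof, with the paper simply referring to the standard sources \cite{Chavel1993,Davies1989,GrigorYan1999} for background on heat kernels. Your sketches for (1), (2), and (4) via the spectral expansion, and your deferral to the parametrix or Li--Yau machinery for (3), are exactly the standard arguments one would find in those references, so your approach is fully consistent with the paper's treatment.
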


\section{Fractional Laplacian on closed Riemannian manifolds}
Throughout this section, unless explicitly stated otherwise, $(M^n,g)$ denotes a closed $n$-dimensional Riemannian manifold.
Motivated by the Euclidean constructions in \cite{Caffarelli-Silvestre2007,DiNezza-Palatucci-Valdinoci2012,Stinga-Torrea2010}, we present several equivalent definitions of the fractional Laplacian $(-\Delta_g)^s$ for $s\in(0,1)$.

For $u\in L^2(M)$ we write its spectral expansion
\[
u=\sum_{k=0}^\infty u_k \phi_k,
\qquad
u_k=\langle u,\phi_k\rangle_{L^2(M)}=\int_M u\,\overline{\phi_k}\,d\mu.
\]
For $s\ge 0$ we define
\begin{equation}\label{eq3.1}
H^s(M)
=
\bigg\{
u=\sum_{k=0}^\infty u_k\phi_k\in L^2(M)
\ \Big|\
\sum_{k=0}^\infty (1+\lambda_k)^{s}|u_k|^2<\infty
\bigg\},
\end{equation}
endowed with the norm
\[
\|u\|_{H^s(M)}^2
=
\sum_{k=0}^\infty (1+\lambda_k)^s|u_k|^2,
\]
which is equivalent to the standard Sobolev $H^s$ norm on closed manifolds.
In particular, $\|u\|_{L^2(M)}\le \|u\|_{H^s(M)}$ for all $s\ge 0$.

\begin{definition}[Spectral fractional Laplacian]\label{def3.1}
Let $s\in(0,1)$.
\begin{itemize}
\item[(i)] As an unbounded operator on $L^2(M)$.
Its domain is
\[
\mathrm{Dom}((-\Delta_g)^s)
=
\bigg\{
u=\sum_{k=0}^\infty u_k\phi_k\in L^2(M)
\ \Big|\
\sum_{k=0}^\infty \lambda_k^{2s}|u_k|^2<\infty
\bigg\},
\]
and for $u\in \mathrm{Dom}((-\Delta_g)^s)$,
\[
(-\Delta_g)^s u
=
\sum_{k=0}^\infty \lambda_k^s u_k\phi_k
\in L^2(M).
\]
Moreover, $\mathrm{Dom}((-\Delta_g)^s)=H^{2s}(M)$ as sets, with equivalent norms.

\item[(ii)] As a bounded operator $H^s(M)\to H^{-s}(M)$.
For $u=\sum u_k\phi_k\in H^s(M)$ we define $(-\Delta_g)^s u\in H^{-s}(M)$ by duality:
\begin{equation}\label{eq3.2}
\langle (-\Delta_g)^s u,\psi\rangle
=
\sum_{k=0}^\infty \lambda_k^s u_k\overline{\psi_k},
\qquad
\psi=\sum_{k=0}^\infty \psi_k\phi_k\in H^s(M).
\end{equation}
This defines a continuous pairing on $H^s(M)\times H^s(M)$.
\end{itemize}
\end{definition}

\begin{remark}\label{rem3.2}
It is often convenient to use the quadratic form
\[
|u|_{H^s_\Delta}^2
=
\sum_{k=0}^\infty \lambda_k^{s}|u_k|^2
=
\|(-\Delta_g)^{s/2}u\|_{L^2(M)}^2,
\]
which is a seminorm (it vanishes on constants).
On mean-zero functions, $|\,\cdot\,|_{H^s_\Delta}$ is equivalent to the full $H^s(M)$ norm.
\end{remark}

\subsection{Heat semigroup and singular integral}

The fractional Laplacian $(-\Delta_g)^s$ can be defined as the $s$-th power (in the spectral sense) of the Laplace--Beltrami operator on a closed Riemannian manifold, and it admits a semigroup representation.

\begin{definition}\label{def3.3}
Let $s\in(0,1)$ and let $u\in H^{s}(M)$ with spectral coefficients $u_k=\langle u,\phi_k\rangle_{L^2(M)}$.
We fix the normalization constant
\[
c_s=\frac{1}{|\Gamma(-s)|},
\]
and define
\begin{equation}\label{eq3.3}
\begin{aligned}
(-\Delta_g)^s u
&=\sum_{k=0}^{\infty} \lambda_k^{s}u_k\phi_k
\qquad \text{in } L^2(M)\ \text{if }u\in H^{2s}(M),\\[1mm]
\langle (-\Delta_g)^s u,\psi\rangle
&=c_s\int_0^{\infty}\langle u-e^{t\Delta_g}u,\psi\rangle_{L^2(M)}\frac{dt}{t^{1+s}}
\qquad \text{for all }\psi\in H^{s}(M),
\end{aligned}
\end{equation}
which defines $(-\Delta_g)^s u\in H^{-s}(M)$ in general.
\end{definition}

The equivalence of the two expressions follows from the scalar identity
\[
\lambda^{s}
=c_s\int_0^{\infty}\bigl(1-e^{-\lambda t}\bigr)\frac{dt}{t^{1+s}},
\qquad \lambda>0,\ s\in(0,1),
\]
together with the spectral expansions $u=\sum u_k\phi_k$ and $\psi=\sum \psi_k\phi_k$.
Indeed, for $u,\psi\in H^{s}(M)$,
\begin{equation}\label{eq3.4}
\begin{aligned}
\langle (-\Delta_g)^s u,\psi\rangle
&=\sum_{k=0}^{\infty} \lambda_k^{s}u_k\overline{\psi_k} \\
&=c_s\sum_{k=0}^{\infty}\int_0^{\infty}\bigl(1-e^{-t\lambda_k}\bigr)u_k\overline{\psi_k}\frac{dt}{t^{1+s}} \\
&=c_s\int_0^{\infty}\left(\sum_{k=0}^{\infty}u_k\overline{\psi_k}-\sum_{k=0}^{\infty}e^{-t\lambda_k}u_k\overline{\psi_k}\right)\frac{dt}{t^{1+s}} \\
&=c_s\int_0^{\infty}\langle u-e^{t\Delta_g}u,\psi\rangle_{L^2(M)}\frac{dt}{t^{1+s}}.
\end{aligned}
\end{equation}

The interchange of sum and integral is justified by absolute integrability.
For $s\in(0,1)$ and $\lambda\ge 0$,
\[
\int_0^\infty \frac{1-e^{-\lambda t}}{t^{1+s}}\,dt
=\frac{\lambda^{s}}{c_s}.
\]
Therefore,
\[
\sum_{k=0}^\infty \lambda_k^{s}|u_k||\psi_k|
\le
\left(\sum_{k=0}^\infty \lambda_k^{s}|u_k|^2\right)^{1/2}
\left(\sum_{k=0}^\infty \lambda_k^{s}|\psi_k|^2\right)^{1/2}
<\infty,
\]
for all $u,\psi\in H^{s}(M)$.

\begin{theorem}\label{thm3.4}
Let $u,\psi\in H^{s}(M)$ with $s\in(0,1)$. Then
\begin{equation}\label{eq3.5}
\big\langle (-\Delta_g)^s u,\psi\big\rangle_{H^{-s},H^{s}}
=
\frac12\int_{M}\!\int_{M}\bigl(u(x)-u(y)\bigr)\overline{\bigl(\psi(x)-\psi(y)\bigr)}
\,\KM(x,y)\,d\mu(x)\,d\mu(y),
\end{equation}
where the kernel $\KM$ is given, for $x\ne y$, by
\begin{equation}\label{eq3.6}
0\le \KM(x,y)
=
c_s\int_{0}^{\infty} K_M(t,x,y)\,\frac{dt}{t^{1+s}},
\qquad
c_s=\frac{1}{|\Gamma(-s)|},
\end{equation}
and there exists $C_{M,s}>0$ depending only on $(M,g)$ and $s$ such that
\[
\KM(x,y)\le \frac{C_{M,s}}{\dist(x,y)^{\,n+2s}},
\qquad x\ne y.
\]
\end{theorem}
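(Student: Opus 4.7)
The plan is to unfold the semigroup expression of Definition \ref{def3.3} via the heat kernel and then apply Fubini. Fix $t>0$. Using conservation of mass $\int_M K_M(t,x,y)\,d\mu(y)=1$ from Proposition \ref{pro2.2}(2), I would write $u(x)=\int_M K_M(t,x,y)\,u(x)\,d\mu(y)$; subtracting the representation $e^{t\Delta_g}u(x)=\int_M K_M(t,x,y)u(y)\,d\mu(y)$ and pairing against $\psi$ in $L^2(M)$ yields
\[
\langle u-e^{t\Delta_g}u,\psi\rangle_{L^2}
=\int_M\!\int_M K_M(t,x,y)\bigl(u(x)-u(y)\bigr)\overline{\psi(x)}\,d\mu(x)\,d\mu(y).
\]
Renaming $x\leftrightarrow y$ and exploiting the symmetry $K_M(t,x,y)=K_M(t,y,x)$ from Proposition \ref{pro2.1}(3), then averaging, produces the symmetric form
\[
\langle u-e^{t\Delta_g}u,\psi\rangle_{L^2}
=\tfrac12\int_M\!\int_M K_M(t,x,y)\bigl(u(x)-u(y)\bigr)\overline{\bigl(\psi(x)-\psi(y)\bigr)}\,d\mu(x)\,d\mu(y).
\]

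Next I would multiply by $c_s/t^{1+s}$, integrate over $t\in(0,\infty)$, and push the $t$-integral inside so that the inner bracket becomes $\KM(x,y)$ as given in \eqref{eq3.6}. Absolute integrability is the delicate point, since no Gagliardo-type bound is yet available. To bootstrap it, I would specialize the symmetric identity above to $\psi=u$ and invoke Tonelli (the integrand is non-negative), obtaining
\[
\int_0^\infty\!\frac{c_s}{t^{1+s}}\int_M\!\int_M K_M(t,x,y)|u(x)-u(y)|^2\,d\mu\,dt
=2c_s\!\int_0^\infty\!\langle u-e^{t\Delta_g}u,u\rangle_{L^2}\frac{dt}{t^{1+s}}
=2\|(-\Delta_g)^{s/2}u\|_{L^2(M)}^2,
\]
which is finite for $u\in H^s(M)$; the analogous bound holds for $\psi$. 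Combining via $|a\overline b|\le\tfrac12(|a|^2+|b|^2)$ supplies the absolute integrability required to invoke Fubini, producing \eqref{eq3.5}.

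For the pointwise kernel bound I would split the defining integral \eqref{eq3.6} at the time $t_0$ of Proposition \ref{pro2.2}(3). On $(0,t_0)$ the short-time Gaussian upper bound $K_M(t,x,y)\le Ct^{-n/2}\exp(-d_g(x,y)^2/(ct))$ together with the substitution $\tau=d_g(x,y)^2/(ct)$ converts $\int_0^{t_0}K_M(t,x,y)t^{-1-s}\,dt$ into $C\,d_g(x,y)^{-(n+2s)}$ times a convergent $\Gamma$-type integral $\int \tau^{n/2+s-1}e^{-\tau}\,d\tau$. On $(t_0,\infty)$ Proposition \ref{pro2.2}(4) gives $K_M(t,x,y)\le 1/\Vol(M)+Ce^{-\lambda_1 t}$, so $\int_{t_0}^\infty K_M(t,x,y)t^{-1-s}\,dt$ is bounded by a constant depending only on $(M,g)$ and $s$; since $d_g(x,y)\le\diam(M)$, this tail contribution is absorbed into $C_{M,s}\,\dist(x,y)^{-(n+2s)}$.

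The main obstacle is justifying the Fubini interchange in the absence of an a priori Gagliardo-type bound, since such a bound is essentially what the theorem produces. The circularity is resolved by exploiting the non-negativity of the diagonal integrand together with the spectral identity for $\langle(-\Delta_g)^s u,u\rangle$, which together yield just enough integrability to legitimize the exchange in the general bilinear pairing; the kernel bound is then a self-contained consequence of the heat-kernel estimates collected in the previous section.
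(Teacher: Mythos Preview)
Your proposal is correct and follows essentially the same route as the paper: derive the symmetric bilinear identity at fixed $t$ via mass conservation and kernel symmetry, then integrate in $t$ and read off $\KM$; for the pointwise bound, split at $t_0$ and use the short-time Gaussian estimate on $(0,t_0)$ and uniform boundedness on $[t_0,\infty)$, absorbing the tail via $\dist(x,y)\le\diam(M)$. Your treatment is in fact more careful than the paper's on the Fubini step---the paper simply asserts Fubini, whereas you bootstrap absolute integrability from the diagonal case via Tonelli and the spectral finiteness of $\|(-\Delta_g)^{s/2}u\|_{L^2}^2$; one minor point is that Proposition~\ref{pro2.2}(4) is stated for $t\ge 1$, so if $t_0<1$ you should cover $[t_0,1]$ by the smoothness (hence boundedness) of $K_M$ on that compact time interval.
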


\begin{proof}
By Definition~\ref{def3.3} (with $c_s=\frac{1}{|\Gamma(-s)|}$), for $u,\psi\in H^s(M)$,
\begin{equation}\label{eq3.4bis}
\big\langle (-\Delta_g)^s u,\psi\big\rangle_{H^{-s},H^{s}}
=
c_s\int_{0}^{\infty}\big\langle u-e^{t\Delta_g}u,\psi\big\rangle_{L^2(M)}\,\frac{dt}{t^{1+s}}.
\end{equation}

Fix $t>0$. Using the heat-kernel representation \eqref{eq2.5} and Proposition~\ref{pro2.2}, we write
\begin{align*}
\big\langle u-e^{t\Delta_g}u,\psi\big\rangle_{L^2(M)}
&=\int_M u(x)\overline{\psi(x)}\,d\mu(x)-\int_M\!\int_M K_M(t,x,y)u(y)\overline{\psi(x)}\,d\mu(y)d\mu(x)\\
&=\int_M\!\int_M K_M(t,x,y)u(x)\overline{\psi(x)}\,d\mu(y)d\mu(x)
   -\int_M\!\int_M K_M(t,x,y)u(y)\overline{\psi(x)}\,d\mu(y)d\mu(x)\\
&=\int_M\!\int_M K_M(t,x,y)\bigl(u(x)-u(y)\bigr)\overline{\psi(x)}\,d\mu(x)d\mu(y).
\end{align*}
By symmetry $K_M(t,x,y)=K_M(t,y,x)$, exchanging $x$ and $y$ yields also
\[
\big\langle u-e^{t\Delta_g}u,\psi\big\rangle_{L^2(M)}
=-\int_M\!\int_M K_M(t,x,y)\bigl(u(x)-u(y)\bigr)\overline{\psi(y)}\,d\mu(x)d\mu(y).
\]
Adding the two identities gives
\begin{equation}\label{eq3.4ter}
2\big\langle u-e^{t\Delta_g}u,\psi\big\rangle_{L^2(M)}
=
\int_M\!\int_M K_M(t,x,y)\bigl(u(x)-u(y)\bigr)\overline{\bigl(\psi(x)-\psi(y)\bigr)}\,d\mu(x)d\mu(y).
\end{equation}

Insert \eqref{eq3.4ter} into \eqref{eq3.4bis}. By Fubini theorem we obtain
\[
\big\langle (-\Delta_g)^s u,\psi\big\rangle_{H^{-s},H^{s}}
=
\frac12\int_M\!\int_M \bigl(u(x)-u(y)\bigr)\overline{\bigl(\psi(x)-\psi(y)\bigr)}
\left(c_s\int_0^\infty K_M(t,x,y)\frac{dt}{t^{1+s}}\right)
d\mu(x)d\mu(y),
\]
which is exactly \eqref{eq3.5} with \eqref{eq3.6}.

We now estimate $\KM(x,y)$ for $x\ne y$. Using the short-time Gaussian upper bound in Proposition~\ref{pro2.2}, there exist $t_0>0$ and constants $C,c>0$ such that for $0<t\le t_0$,
\[
0\le K_M(t,x,y)\le \frac{C}{t^{n/2}}\exp\Big(-\frac{\dist(x,y)^2}{c\,t}\Big).
\]
Hence
\[
\int_{0}^{t_0} K_M(t,x,y)\,\frac{dt}{t^{1+s}}
\le
C\int_{0}^{t_0} t^{-\frac{n}{2}-1-s}\exp\Big(-\frac{\dist(x,y)^2}{c\,t}\Big)\,dt
\le
\frac{C}{\dist(x,y)^{\,n+2s}}.
\]
For the large-time part, since $M$ is compact, $K_M(t,x,y)$ is bounded uniformly in $(x,y)$ for $t\ge t_0$, and therefore
\[
\int_{t_0}^{\infty} K_M(t,x,y)\,\frac{dt}{t^{1+s}}\le C(M,s).
\]
Since $\dist(x,y)\le \diam(M)$ for all $x,y\in M$, we have
\[
C(M,s)\le \frac{C(M,s)\,\diam(M)^{n+2s}}{\dist(x,y)^{\,n+2s}},
\]
so combining the two ranges yields
\[
\int_{0}^{\infty} K_M(t,x,y)\,\frac{dt}{t^{1+s}}
\le
\frac{C_{M,s}}{\dist(x,y)^{\,n+2s}},
\qquad x\ne y,
\]
and multiplying by $c_s$ proves the bound in \eqref{eq3.6}.
\end{proof}

\begin{remark}\label{rem3.5}
On a noncompact manifold, the identity $\int_M K_M(t,x,y)\,d\mu(y)=1$ may fail in general.
It holds, for example, under stochastic completeness. If mass conservation fails, an additional term appears in the derivation above, just as in the fractional divergence-form operators studied in \cite{Caffarelli-Stinga2016}.
\end{remark}

Based on Theorem~\ref{thm3.4}, we can now give another definition of the fractional Laplacian on $M$, closely related to the spectral one, which expresses it as a singular integral operator.

\begin{definition}\label{def3.6}
Let $s\in(0,1)$.
For $u\in C^\infty(M)$, the fractional Laplacian $(-\Delta_g)^s$ is defined by
\[
\begin{aligned}
(-\Delta_g)^{s} u(x)
    &= \mathrm{p.v.}\!\int_M (u(x)-u(y))\,K_M^s(x,y)\,d\mu(y) \\
    &= \lim_{\varepsilon\to 0}\int_M (u(x)-u(y))\,K^s_{M,\varepsilon}(x,y)\,d\mu(y).
\end{aligned}
\]
Here $\mathrm{p.v.}$ denotes the principal value, as encoded by the limiting procedure above.
The kernel $K_M^s(x,y)$ is the singular kernel introduced in \eqref{eq3.6}, and
\[
K^s_{M,\varepsilon}(x,y)
    = c_s
      \int_0^{\infty} K_M(t,x,y)\,
      e^{-\frac{\varepsilon^2}{4t}}
      \frac{dt}{t^{1+s}}
\]
is a natural regularization, where $c_s$ is the same constant as in Definition~\ref{def3.3}.
\end{definition}

\begin{remark}\label{rem3.7}
If the closed manifold $M$ is replaced by Euclidean space $\mathbb{R}^n$, then
\[
\begin{aligned}
K_M^{s}(x,y)
&= c_s \int_0^{\infty} K_{\mathbb{R}^n}(t,x,y)\,\frac{dt}{t^{1+s}} \\
&= c_s\int_0^{\infty}
        \left(\frac{1}{(4\pi t)^{n/2}}
               e^{-\frac{|x-y|^2}{4t}}\right)
        \frac{dt}{t^{1+s}} \\
&= \frac{\alpha_{n,s}}{|x-y|^{\,n+2s}},
\end{aligned}
\]
where
\[
\alpha_{n,s}
    = \frac{2^{2s}\,\Gamma\!\left(\frac{n+2s}{2}\right)}
           {\pi^{n/2}\,|\Gamma(-s)|}.
\]
Thus we recover the classical fractional Laplacian kernel on $\mathbb{R}^n$.

Moreover, when $M=\mathbb{R}^n$,
\[
K^s_{\mathbb{R}^n,\varepsilon}(x,y)
= c_s\int_0^{\infty}
        K_{\mathbb{R}^n}(t,x,y)\,e^{-\frac{\varepsilon^2}{4t}}
        \frac{dt}{t^{1+s}}
= \frac{\alpha_{n,s}}{\left(|x-y|^2+\varepsilon^2\right)^{\frac{n+2s}{2}}},
\]
which is a very natural regularization of $\alpha_{n,s}\,|x-y|^{-(n+2s)}$.
It is straightforward to verify that this regularization yields the same principal value as integrating over
$\mathbb{R}^n\setminus B_\varepsilon(x)$ and letting $\varepsilon\to 0^{+}$.

The same holds on a Riemannian manifold: many regularizations of the singular kernel $K_M^s(x,y)$ lead to the same principal value under mild assumptions, as shown in \cite[Proposition~2.5]{Caselli-Florit-Simon-Serra-2024}.
\end{remark}

\begin{theorem}\label{thm3.8}
	For every $s\in(0,1)$, Definitions~\ref{def3.3} and \ref{def3.6} agree:
	\begin{itemize}
		\item[(1)] If $u\in C^\infty(M)$, the two definitions coincide pointwise everywhere.
		\item[(2)] If $u\in L^2(M)$, they coincide in the sense of distributions.
	\end{itemize}
\end{theorem}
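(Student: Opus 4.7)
I would reduce both definitions to a common heat-semigroup representation, exploiting the fact that the built-in Gaussian regularization $\KMeps$ makes Fubini available. Part (1) is proved pointwise for smooth $u$, and part (2) follows from (1) by the self-adjointness of the spectral form. Throughout, the short-time Gaussian estimate in Proposition~\ref{pro2.2}(3) and the mass conservation in Proposition~\ref{pro2.2}(2) are the essential analytic tools.

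\emph{Part (1), pointwise agreement for $u\in C^\infty(M)$.} Smoothness of $u$ on the closed manifold gives rapid decay of the spectral coefficients $u_k$ (by iterating integration against the eigenvalue equation) and polynomial growth of $\|\phi_k\|_\infty$ (by Sobolev embedding), so the spectral sum $\sum_k \lambda_k^s u_k \phi_k(x)$ converges absolutely at each $x$. Using the scalar identity $\lambda^s=c_s\int_0^\infty(1-e^{-\lambda t})t^{-1-s}\,dt$ and exchanging sum and integral yields the pointwise semigroup formula
\[
(-\Delta_g)^s u(x)=c_s\int_0^\infty\bigl(u(x)-e^{t\Delta_g}u(x)\bigr)\frac{dt}{t^{1+s}}.
\]
By mass conservation I rewrite $u(x)-e^{t\Delta_g}u(x)=\int_M K_M(t,x,y)(u(x)-u(y))\,d\mu(y)$. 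To swap the $t$ and $y$ integrations I insert the cutoff $e^{-\varepsilon^2/(4t)}$ defining $\KMeps$: the Lipschitz bound $|u(x)-u(y)|\le \|\nabla_g u\|_\infty\dist(x,y)$ together with the Gaussian upper bound make the double integrand absolutely integrable for each $\varepsilon>0$, and Fubini gives
\[
c_s\int_0^\infty\bigl(u(x)-e^{t\Delta_g}u(x)\bigr)e^{-\varepsilon^2/(4t)}\frac{dt}{t^{1+s}}
=\int_M \bigl(u(x)-u(y)\bigr)\KMeps(x,y)\,d\mu(y).
\]
Dominated convergence on the left as $\varepsilon\to 0^+$ recovers the semigroup integral, while the right-hand side is precisely the regularization that defines the principal value in Definition~\ref{def3.6}; the two limits must therefore coincide.

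\emph{Part (2) and the main obstacle.} For $u\in L^2(M)$ and $\psi\in C^\infty(M)$, self-adjointness of the spectral operator gives $\langle(-\Delta_g)^s u,\psi\rangle_{H^{-s},H^s}=\langle u,(-\Delta_g)^s\psi\rangle_{L^2(M)}$. By part (1), the inner factor $(-\Delta_g)^s\psi$ coincides pointwise with the singular integral of Definition~\ref{def3.6} applied to the smooth function $\psi$; thus the distributional actions of the two definitions agree on $C^\infty(M)$, and by density in $H^s(M)$ they agree as elements of $H^{-s}(M)$. The hardest step is the $\varepsilon\to 0^+$ limit in part (1) when $s\ge\tfrac12$, for then $(u(x)-u(y))\KM(x,y)$ is not absolutely integrable near the diagonal and convergence hinges on antisymmetric cancellation. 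I would handle this via normal coordinates centered at $x$, exploiting that the heat-kernel asymptotics force $\KMeps(x,\exp_x v)$ to be even in $v$ up to a curvature remainder that is integrable against the quadratic Taylor error $u(y)-u(x)-\langle\nabla_g u(x),\exp_x^{-1}(y)\rangle$; the odd linear term then vanishes in the limit by symmetry. Once this cancellation is secured, both halves of the theorem follow directly.
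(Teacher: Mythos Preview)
Your proposal is correct and follows essentially the same route as the paper: both establish the regularized identity
\[
c_s\int_0^\infty\bigl(u(x)-e^{t\Delta_g}u(x)\bigr)e^{-\varepsilon^2/(4t)}\,\frac{dt}{t^{1+s}}
=\int_M\bigl(u(x)-u(y)\bigr)\KMeps(x,y)\,d\mu(y)
\]
via mass conservation and Fubini, then let $\varepsilon\to0$; and both deduce Part~(2) from Part~(1) by self-adjointness (the paper uses self-adjointness of $e^{t\Delta_g}$ at the $\varepsilon$-level, you use it at the spectral level, which amounts to the same thing).

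One remark: the ``main obstacle'' you highlight is not actually an obstacle. Once the regularized identity is in hand, convergence of the right-hand side as $\varepsilon\to0$ follows \emph{for free} from convergence of the left-hand side, which you already justified by dominated convergence using $|u(x)-e^{t\Delta_g}u(x)|\le t\|\Delta_g u\|_\infty$ for small $t$. Your earlier sentence ``the two limits must therefore coincide'' already closes the argument; the subsequent normal-coordinate cancellation discussion is unnecessary. The paper dispatches this step in a single line (``Since $u$ is smooth, letting $\varepsilon\to0$ gives\ldots'') for exactly this reason.
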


\begin{proof}
	The argument follows \cite{Caselli-Florit-Simon-Serra-2024}; we include it for completeness.
	
	\textbf{Step 1.}
	Let $u\in C^\infty(M)$ and $\varepsilon>0$. By Proposition~\ref{pro2.2} and the heat kernel representation,
	\begin{equation}\label{eq3.8}
		c_s\int_{0}^{\infty}\!\big(u-e^{t\Delta_g}u\big)(x)\,e^{-\frac{\varepsilon^2}{4t}}\,
		\frac{dt}{t^{1+s}}
		=\int_M \big(u(x)-u(y)\big)\,\KMeps(x,y)\,d\mu(y),
	\end{equation}
	where
	\[
	\KMeps(x,y)=c_s\int_{0}^{\infty}K_M(t,x,y)\,e^{-\frac{\varepsilon^2}{4t}}\,
	\frac{dt}{t^{1+s}}.
	\]
	Since $u$ is smooth, letting $\varepsilon\rightarrow0$ gives
	\[
	c_s\int_{0}^{\infty}\!\big(u-e^{t\Delta_g}u\big)(x)\,\frac{dt}{t^{1+s}}
	=\mathrm{p.v.}\!\int_M \big(u(x)-u(y)\big)\,\KM(x,y)\,d\mu(y),
	\]
	which proves (1).
	
	\textbf{Step 2.}
	Let $u\in L^2(M)$ and $\varphi\in C^\infty(M)$. Multiply \eqref{eq3.8} by $u(x)$ and integrate over $M$ to get
	\begin{equation}\label{eq3.9}
		\begin{aligned}
		&c_s\int_M\!\int_{0}^{\infty}\!\big(\varphi-e^{t\Delta_g}\varphi\big)(x)\,u(x)\,
		e^{-\frac{\varepsilon^2}{4t}}\frac{dt}{t^{1+s}}\,d\mu(x)\\
		&=\iint_{M\times M}\big(\varphi(x)-\varphi(y)\big)\,u(x)\,\KMeps(x,y)\,d\mu(x)\,d\mu(y).
		\end{aligned}
	\end{equation}
	For fixed $\varepsilon>0$ both sides are absolutely convergent, so we may exchange the order of integration. Using the self-adjointness of $e^{t\Delta_g}$ in $L^2(M)$,
	\[
	c_s\int_{0}^{\infty}\!e^{-\frac{\varepsilon^2}{4t}}\frac{dt}{t^{1+s}}\,
	\langle u,\varphi-e^{t\Delta_g}\varphi\rangle_{L^2}
	=
	c_s\int_{0}^{\infty}\!e^{-\frac{\varepsilon^2}{4t}}\frac{dt}{t^{1+s}}\,
	\langle u-e^{t\Delta_g}u,\varphi\rangle_{L^2}.
	\]
	
	On the right-hand side of \eqref{eq3.9}, by the symmetry $\KMeps(x,y)=\KMeps(y,x)$,
	\[
	\iint_{M\times M}\big(\varphi(x)-\varphi(y)\big)\,u(x)\,\KMeps(x,y)\,d\mu(x)\,d\mu(y)
	=\int_M\!\bigg[\int_M\big(\varphi(x)-\varphi(y)\big)\,\KMeps(x,y)\,d\mu(y)\bigg]\,u(x)\,d\mu(x).
	\]
	Letting $\varepsilon\rightarrow0$ and invoking part (1) for the test function $\varphi$ yields
	\[
	\langle u,(-\Delta_g)^s\varphi\rangle_{L^2}
	=\int_M \bigg[\mathrm{p.v.}\!\int_M\big(\varphi(x)-\varphi(y)\big)\,\KM(x,y)\,d\mu(y)\bigg] u(x)\,d\mu(x),
	\]
	i.e. Definitions~\ref{def3.3} and \ref{def3.6} agree in the sense of distributions. This proves (2).
\end{proof}

\subsection{Dirichlet-to-Neumann map via an extension problem}
We first relate the heat semigroup to the extension problem \eqref{eq3.10} and obtain the corresponding Poisson formula.
\begin{definition}\label{def3.9}
Let $s\in(0,1)$ and $f\in H^s(M)$.
Define $U : M\times(0,\infty)\rightarrow\mathbb{R}$ by
\begin{equation}\label{eq3.11}
U(x,y)
    = \frac{y^{2s}}{2^{\,2s}\Gamma(s)}
      \int_{0}^{\infty} \big(e^{t\Delta_g}f\big)(x)\,
      e^{-\frac{y^2}{4t}}\,
      \frac{dt}{t^{1+s}}.
\end{equation}
Then $U$ solves
\begin{equation}\label{eq3.10}
\begin{cases}
\Delta_g U(x,y)
    + \dfrac{1-2s}{y}\,\partial_y U(x,y)
    + \partial_{yy}U(x,y) = 0,
    & x\in M,\ y>0,\\[4pt]
U(x,0)=f(x),
\end{cases}
\end{equation}
and the fractional Laplacian is realized as the Dirichlet--to--Neumann operator
\[
(-\Delta_g)^{s} f(x)
    =-c(s)\,\lim_{y\to 0^+} y^{1-2s}\,\partial_y U(x,y),
\qquad
c(s)=\frac{2^{\,2s-1}\Gamma(s)}{\Gamma(1-s)}.
\]
\end{definition}

\begin{lemma}\label{lem3.10}
Let $s\in(0,1)$ and $f\in C(M)$.  
The function $U$ defined by \eqref{eq3.11} solves the extension problem \eqref{eq3.10}, and it admits the Poisson kernel representation
\begin{equation}\label{eq3.12}
U(x,y)
    = \int_M P_s(x,y;\xi)\,f(\xi)\,d\mu(\xi),
\qquad
P_s(x,y;\xi)
    = \frac{y^{2s}}{2^{\,2s}\Gamma(s)}
      \int_{0}^{\infty} K_M(t,x,\xi)\,
      e^{-\frac{y^2}{4t}}
      \frac{dt}{t^{1+s}}.
\end{equation}
Furthermore, $P_s(x,y;\xi)\ge 0$ for all $x,\xi\in M$ and $y>0$, and
\[
\int_M P_s(x,y;\xi)\,d\mu(\xi)=1,\qquad y>0.
\]
Consequently, $U(\cdot,y)\to f$ uniformly on $M$ as $y\to 0^{+}$.
\end{lemma}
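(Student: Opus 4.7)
The plan is to establish the integral representation first, then the extension PDE, and finally the uniform boundary limit, using Proposition~\ref{pro2.2} throughout.

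\emph{Poisson formula, positivity, and mass.} I substitute the semigroup identity $(e^{t\Delta_g}f)(x)=\int_M K_M(t,x,\xi)f(\xi)\,d\mu(\xi)$ into \eqref{eq3.11} and apply Fubini to pull the $\xi$-integral outside. For any fixed $y>0$ the factor $e^{-y^2/4t}$ kills the $t\to 0^+$ singularity of $K_M$ coming from the short-time Gaussian bound, while the large-time bound in Proposition~\ref{pro2.2}(4) and the integrability of $t^{-1-s}$ at infinity handle the tail, so the interchange is licit and yields \eqref{eq3.12}. Nonnegativity $P_s\ge 0$ is immediate. For the mass identity, exchange the $\xi$- and $t$-integrals, use $\int_M K_M(t,x,\xi)\,d\mu(\xi)=1$, and evaluate the one-dimensional integral via $u=y^2/(4t)$:
\begin{equation*}
\int_0^\infty e^{-y^2/4t}\frac{dt}{t^{1+s}}=\frac{4^s\Gamma(s)}{y^{2s}},
\end{equation*}
which combines with the prefactor $y^{2s}/(2^{2s}\Gamma(s))$ to give $\int_M P_s(x,y;\xi)\,d\mu(\xi)=1$.

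\emph{Extension PDE.} To check \eqref{eq3.10}, I diagonalize. With $f=\sum_k a_k\phi_k$, \eqref{eq3.11} becomes
\begin{equation*}
U(x,y)=\sum_{k\ge 0}a_k\,\phi_k(x)\,g_s(y;\lambda_k),\qquad g_s(y;\lambda)=\frac{y^{2s}}{2^{2s}\Gamma(s)}\int_0^\infty e^{-\lambda t-y^2/4t}\frac{dt}{t^{1+s}}.
\end{equation*}
The inner integral is the classical representation of the modified Bessel function $K_s$, and a direct computation shows that $y\mapsto g_s(y;\lambda)$ solves
\begin{equation*}
g_s''(y;\lambda)+\frac{1-2s}{y}g_s'(y;\lambda)-\lambda\,g_s(y;\lambda)=0,\qquad \lim_{y\to 0^+}g_s(y;\lambda)=1,
\end{equation*}
the boundary value following from the same change $u=y^2/(4t)$ used above. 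Since $\Delta_g\phi_k=-\lambda_k\phi_k$, applying the operator $\Delta_g+\frac{1-2s}{y}\partial_y+\partial_{yy}$ to $U$ annihilates every term of the series and gives \eqref{eq3.10}. The termwise differentiation is routine thanks to the factor $e^{-\lambda_k t}$ in each summand. An alternative derivation avoiding Bessel functions is to differentiate under the integral in \eqref{eq3.11}, use $\Delta_g(e^{t\Delta_g}f)=\partial_t(e^{t\Delta_g}f)$, and integrate by parts in $t$ against the weight $y^{2s}t^{-1-s}e^{-y^2/4t}$, as in Stinga--Torrea \cite{Stinga-Torrea2010}.

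\emph{Uniform boundary limit, and the main obstacle.} By the mass identity, $U(x,y)-f(x)=\int_M P_s(x,y;\xi)(f(\xi)-f(x))\,d\mu(\xi)$. Given $\eta>0$, uniform continuity of $f$ on the compact $M$ yields $\delta>0$ with $|f(\xi)-f(x)|<\eta$ whenever $d_g(x,\xi)<\delta$, so the near-diagonal contribution is bounded by $\eta$ uniformly in $x$. For the far piece $\{\xi:d_g(x,\xi)\ge\delta\}$ I split the defining $t$-integral of $P_s$ at a small fixed $t_0>0$: on $\{t\le t_0\}$ the short-time Gaussian bound in Proposition~\ref{pro2.2}(3) gives $K_M(t,x,\xi)\le Ct^{-n/2}e^{-\delta^2/ct}$, and the resulting contribution is dominated by $C_\delta\,y^{2s}$; on $\{t\ge t_0\}$ the uniform bound $K_M\le C(M)$ from Proposition~\ref{pro2.2}(4) together with the integrability of $t^{-1-s}$ at infinity produces a contribution of the same order $O(y^{2s})$. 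Both vanish as $y\to 0^+$ uniformly in $x$, and letting $\eta\to 0$ completes the proof. The main technical point is precisely this far-piece estimate, where the bound must depend only on $\delta$ and not on $(x,\xi)$ individually; compactness of $M$ together with the global heat-kernel estimates in Proposition~\ref{pro2.2} are exactly what make the uniform-in-$x$ control possible.
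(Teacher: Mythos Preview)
Your proof is correct. Parts (1) and (3) follow essentially the paper's line: Fubini for the Poisson formula, the change of variable $u=y^2/4t$ for the mass identity, and a near/far split (using uniform continuity of $f$ on $M$ together with the Gaussian/boundedness heat-kernel estimates) for the uniform boundary limit. The paper organizes the far-piece estimate by first rewriting $P_s$ via the substitution $u=y^2/4t$ and then splitting the $u$-integral at $u_*=y^2/4t_0$, whereas you split the $t$-integral directly at $t_0$; these are equivalent.

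The only genuine divergence is in the verification of the extension PDE. You diagonalize via the spectral expansion $f=\sum_k a_k\phi_k$ and reduce to the Bessel-type ODE for $g_s(y;\lambda)$ mode by mode. The paper instead works directly with the integral representation: writing $U(x,y)=\int_0^\infty A(t,x)\Phi(y,t)\,dt$ with $A=e^{t\Delta_g}f$ and $\Phi(y,t)=\frac{y^{2s}}{2^{2s}\Gamma(s)}e^{-y^2/4t}t^{-1-s}$, it verifies the scalar identity $\frac{1-2s}{y}\partial_y\Phi+\partial_{yy}\Phi-\partial_t\Phi\equiv 0$, uses $\partial_t A=\Delta_g A$, and reduces the PDE to $\int_0^\infty \partial_t(A\Phi)\,dt=0$, which holds because the boundary terms vanish (this is the Stinga--Torrea argument you mention as an alternative). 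The paper's route avoids any appeal to eigenfunction $L^\infty$ bounds needed to justify termwise differentiation in $x$, and applies verbatim for $f\in C(M)$ without passing through $L^2$; your spectral argument is cleaner conceptually but requires those auxiliary estimates, which are indeed routine on a closed manifold.
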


\begin{proof}
(1).
Using the heat kernel representation $(e^{t\Delta_g}f)(x)=\int_M K_M(t,x,\xi)\,f(\xi)\,d\mu(\xi)$ and Fubini theorem,
\[
U(x,y)=\int_M \Big[\frac{y^{2s}}{2^{2s}\Gamma(s)}\int_0^\infty K_M(t,x,\xi)\,e^{-\frac{y^2}{4t}}\frac{dt}{t^{1+s}}\Big]f(\xi)\,d\mu(\xi),
\]
which is \eqref{eq3.12}.

\medskip
(2).
Set
\[
A(t,x)=(e^{t\Delta_g}f)(x),\qquad
\Phi(y,t)=\frac{y^{2s}}{2^{2s}\Gamma(s)}\,e^{-\frac{y^2}{4t}}\,t^{-1-s},
\quad\text{so }U(x,y)=\int_0^\infty A(t,x)\,\Phi(y,t)\,dt.
\]
We use $\partial_t A=\Delta_g A$ and differentiate under the integral
\[
\Delta_g U=\int_0^\infty (\partial_t A)\,\Phi\,dt,\qquad
U_y=\int_0^\infty A\,\partial_y\Phi\,dt,\qquad
U_{yy}=\int_0^\infty A\,\partial_{yy}\Phi\,dt.
\]
A direct computation shows the key scalar identity
\begin{equation}\label{eq3.13}
\frac{1-2s}{y}\,\partial_y\Phi(y,t)\ +\ \partial_{yy}\Phi(y,t)\ -\ \partial_t\Phi(y,t)\ \equiv\ 0
\qquad (y>0,\ t>0).
\end{equation}
Therefore,
\[
\Delta_g U + \frac{1-2s}{y}U_y + U_{yy}
=\int_0^\infty \Big[(\partial_t A)\,\Phi + A\Big(\frac{1-2s}{y}\partial_y\Phi+\partial_{yy}\Phi\Big)\Big]dt
=\int_0^\infty \big(\partial_t A\,\Phi + A\,\partial_t\Phi\big)\,dt.
\]
Then $t\mapsto A(t,x)\Phi(y,t)$ is $C^1$ on any $[\varepsilon,T]\subset(0,\infty)$, so
\[
\int_{\varepsilon}^{T}\partial_t\!\big(A\Phi\big)\,dt
=\big[A\Phi\big]_{t=\varepsilon}^{t=T}.
\]
We claim
\[
\lim_{T\rightarrow\infty}A(T,x)\Phi(y,T)=0
\quad\text{and}\quad
\lim_{\varepsilon\rightarrow 0}A(\varepsilon,x)\Phi(y,\varepsilon)=0,
\]
which implies $\int_{0}^{\infty}\partial_t(A\Phi)\,dt=0$.

As $t\rightarrow\infty$. Since $e^{t\Delta_g}$ is an $L^\infty$–contraction,
$|A(t,x)|\le\|f\|_{L^\infty(M)}$ for all $t>0$. Moreover
$\Phi(y,t)=\frac{y^{2s}}{2^{2s}\Gamma(s)}\,t^{-1-s}e^{-\frac{y^2}{4t}}
\sim C\,t^{-1-s}$ as $t\rightarrow\infty$, hence
\[
|A(t,x)\Phi(y,t)|\ \le\ \|f\|_{L^\infty}\,\frac{y^{2s}}{2^{2s}\Gamma(s)}\,t^{-1-s}
\ \xrightarrow[t\rightarrow\infty]{}\ 0.
\]

As $t\rightarrow 0$. Again $|A(t,x)|\le\|f\|_{L^\infty}$. Therefore
\[
|A(t,x)\Phi(y,t)|
\ \le\ \|f\|_{L^\infty}\,\frac{y^{2s}}{2^{2s}\Gamma(s)}\,
t^{-1-s}\,e^{-\frac{y^2}{4t}}.
\]
Let $a=\frac{y^2}{4}>0$ and $m=1+s>0$. The elementary limit $\lim\limits_{t\rightarrow 0} t^{-m}e^{-a/t}=0$
gives $A(t,x)\Phi(y,t)\rightarrow 0$ as $t\rightarrow 0$.
Combining both endpoints, we have
\[
\int_{0}^{\infty}\partial_t\!\big(A\Phi\big)\,dt
=\lim_{T\rightarrow\infty}\big[A\Phi\big]_{t=\varepsilon}^{t=T}\Big|_{\varepsilon\rightarrow0}
=0,
\]
and thus
\[
\Delta_g U+\frac{1-2s}{y}\,U_y+U_{yy}
=\int_{0}^{\infty}\!\big(\partial_t A\,\Phi+A\,\partial_t\Phi\big)\,dt
=\int_{0}^{\infty}\!\partial_t(A\Phi)\,dt
=0.
\]

\medskip
(3).
By Proposition~\ref{pro2.2}(2),
\[
\int_M P_s(x,y;\xi)\,d\mu(\xi)
=\frac{y^{2s}}{2^{2s}\Gamma(s)}\int_0^\infty e^{-\frac{y^2}{4t}}\,\frac{dt}{t^{1+s}}
=1,
\]
where we used the change of variables $u=\frac{y^2}{4t}$:
\[
\int_0^\infty e^{-\frac{y^2}{4t}}\,\frac{dt}{t^{1+s}}
=\int_0^\infty \Big(\frac{y^2}{4u}\Big)^{-1-s}\frac{y^2}{4u^2}e^{-u}\,du
=2^{2s}y^{-2s}\Gamma(s).
\]
Positivity is clear from $K_M\ge0$. Fix $\varepsilon>0$. By uniform continuity of $f$ on compact $M$, choose $r\in(0,1)$ so that
\[
d_g(\xi,x)<r\ \Rightarrow\ |f(\xi)-f(x)|<\varepsilon\quad\forall\,x,\xi\in M.
\]
Split
\[
U(x,y)-f(x)=\!\!\int_{B_g(x,r)}\!\!(f(\xi)-f(x))P_s(x,y;\xi)\,d\mu(\xi)
+\!\!\int_{M\setminus B_g(x,r)}\!\!(f(\xi)-f(x))P_s(x,y;\xi)\,d\mu(\xi)
=:I_{\mathrm{near}}+I_{\mathrm{far}}.
\]
Since $\int_M P_s(x,y;\xi)\,d\mu(\xi)=1$, we have $|I_{\mathrm{near}}|\le \varepsilon$. Set
\[
\Theta_r(y)=\sup_{x\in M}\int_{M\setminus B_g(x,r)} P_s(x,y;\xi)\,d\mu(\xi).
\]
Then $|I_{\mathrm{far}}|\le 2\|f\|_{L^\infty}\,\Theta_r(y)$. We only need to show $\Theta_r(y)\rightarrow0$ as $y\rightarrow0$.

Use the change of variables $u=\frac{y^2}{4t}$ to write
\[
P_s(x,y;\xi)=\frac{1}{\Gamma(s)}\int_{0}^{\infty}
K_M\!\Big(\frac{y^2}{4u},x,\xi\Big)\,e^{-u}\,u^{s-1}\,du.
\]
Let $t_0>0$ and $c_3,c_4>0$ be as in Proposition~\ref{pro2.2}(3). Fix $y>0$ and split the $u$–integral at $u_*=\frac{y^2}{4t_0}$:
\[
\begin{aligned}
\Theta_r(y)\ &\le\ \frac{1}{\Gamma(s)}\int_{0}^{u_*}\!\!\sup_{x}\!\int_{M\setminus B_g(x,r)}
K_M\!\Big(\frac{y^2}{4u},x,\xi\Big)\,d\mu(\xi)\,e^{-u}u^{s-1}du\\
&+\frac{1}{\Gamma(s)}\int_{u_*}^{\infty}\!\!\sup_{x}\!\int_{M\setminus B_g(x,r)}
K_M\!\Big(\frac{y^2}{4u},x,\xi\Big)\,d\mu(\xi)\,e^{-u}u^{s-1}du.
\end{aligned}
\]
For $u\in(0,u_*)$ we have $t=\frac{y^2}{4u}\ge t_0$, hence
\[
\sup_{x}\!\int_{M\setminus B_g(x,r)} K_M(t,x,\xi)\,d\mu(\xi)\ \le\ 1,
\]
so
\[
\frac{1}{\Gamma(s)}\int_0^{u_*}\!e^{-u}u^{s-1}du
\le\ \frac{u_*^{\,s}}{\Gamma(s+1)}
=\frac{1}{\Gamma(s+1)}\Big(\frac{y^2}{4t_0}\Big)^{\!s}\xrightarrow[y\rightarrow0]{}0.
\]
For $u\in[u_*,\infty)$ we have $t=\frac{y^2}{4u}\le t_0$, and by Proposition~\ref{pro2.2}(3),
\[
\sup_{x}\!\int_{M\setminus B_g(x,r)}\!\!K_M(t,x,\xi)\,d\mu(\xi)
\le c_3\Big(\frac{4u}{y^2}\Big)^{n/2}\exp\!\Big(-\frac{4u r^2}{c_4 y^2}\Big).
\]
Therefore
\[
\frac{1}{\Gamma(s)}\int_{u_*}^{\infty}\!\!\sup_{x}\!\int_{M\setminus B_g(x,r)}
K_M\!\Big(\frac{y^2}{4u},x,\xi\Big)\,d\mu(\xi)\,e^{-u}u^{s-1}du
\le C\,y^{-n}\int_{u_*}^{\infty} u^{s-1+\frac{n}{2}}\,
\exp\!\Big(-\frac{4u r^2}{c_4 y^2}\Big)du.
\]
Set $v=\dfrac{4u r^2}{c_4 y^2}$. Then the last term is $O(y^{2s})$, hence tends to $0$ as $y\rightarrow0$.
Together with the estimate on $(0,u_*)$ we conclude that $\Theta_r(y)\rightarrow0$ uniformly in $x$.

Combining the estimates for $I_{\mathrm{near}}$ and $I_{\mathrm{far}}$ we obtain
\[
\sup_{x\in M}|U(x,y)-f(x)|\le\varepsilon+2\|f\|_{L^\infty}\,\Theta_r(y)
\ \xrightarrow[y\rightarrow0]{}\ 0,
\]
which proves uniform convergence.
\end{proof}

\begin{theorem}\label{thm3.11}
For every $s\in(0,1)$, Definitions~\ref{def3.9} and~\ref{def3.3}
of the fractional Laplacian coincide.
\end{theorem}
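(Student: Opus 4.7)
The plan is to compute $-c(s)\lim_{y\to 0^+} y^{1-2s}\partial_y U(x,y)$ directly from the representation \eqref{eq3.11} and match it with the semigroup formula for $(-\Delta_g)^s$ in Definition~\ref{def3.3}; by Theorem~\ref{thm3.4} this immediately yields Definition~\ref{def3.9}$\equiv$Definition~\ref{def3.3}. The key step is an algebraic cancellation that exposes a convergent singular integral of the same shape as \eqref{eq3.3}.

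First, I would differentiate \eqref{eq3.11} under the integral sign to obtain
\[
y^{1-2s}\partial_y U(x,y)
=\frac{2s}{2^{2s}\Gamma(s)}\int_0^\infty \!(e^{t\Delta_g}f)(x)\,e^{-\frac{y^2}{4t}}\,\frac{dt}{t^{1+s}}
-\frac{y^2}{2\cdot 2^{2s}\Gamma(s)}\int_0^\infty \!(e^{t\Delta_g}f)(x)\,e^{-\frac{y^2}{4t}}\,\frac{dt}{t^{2+s}}.
\]
Neither term converges as $y\to 0^+$ on its own, but their ``constant parts'' (obtained by replacing $e^{t\Delta_g}f$ by $f$) cancel exactly. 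Indeed, using the elementary scalar identities
\[
\int_0^\infty e^{-a/t}\,\frac{dt}{t^{1+s}}=a^{-s}\Gamma(s),
\qquad
\int_0^\infty e^{-a/t}\,\frac{dt}{t^{2+s}}=a^{-1-s}\Gamma(1+s),
\]
with $a=y^2/4$, a direct computation shows the two constant contributions equal $2sy^{-2s}$ and cancel. Thus one may replace $e^{t\Delta_g}f$ by $e^{t\Delta_g}f-f$ in both integrals.

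Next I would pass to the limit $y\to 0^+$. In the first integral, the integrand $(e^{t\Delta_g}f-f)(x)/t^{1+s}$ is absolutely integrable on $(0,\infty)$ (near $t=0$ by the bound $\|e^{t\Delta_g}f-f\|_{L^\infty}\lesssim t$ for $f\in C^2(M)$, and near $t=\infty$ by Proposition~\ref{pro2.2}(4)), and dominated convergence applies. In the corrected second integral, the change of variable $\tau=y^2/(4t)$ shows that the whole expression is $O(y^{2-2s})\to 0$ because $e^{t\Delta_g}f-f=O(t)=O(y^2)$ uniformly on $M$. I therefore obtain
\[
\lim_{y\to 0^+} y^{1-2s}\partial_y U(x,y)
=\frac{2s}{2^{2s}\Gamma(s)}\int_0^\infty \!\big(e^{t\Delta_g}f-f\big)(x)\,\frac{dt}{t^{1+s}}.
\]

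The last step is constant-matching. By Definition~\ref{def3.3} the right-hand side equals $-\frac{2s}{2^{2s}\Gamma(s)c_s}\,(-\Delta_g)^s f$, where $c_s=1/|\Gamma(-s)|$ and $|\Gamma(-s)|=\Gamma(1-s)/s$ from $\Gamma(1-s)=-s\Gamma(-s)$. Multiplying by $-c(s)=-\frac{2^{2s-1}\Gamma(s)}{\Gamma(1-s)}$ the numerical factors telescope to $1$, giving exactly $(-\Delta_g)^s f(x)$ and closing the proof for $f\in C^\infty(M)$. For general $f\in H^s(M)$, I would promote the identity by approximation: test against $\varphi\in C^\infty(M)$, use self-adjointness of $e^{t\Delta_g}$ on $L^2(M)$ to move derivatives onto $\varphi$, and invoke the smooth case together with the continuity of both sides in the $H^{-s}(M)$ norm.

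The main obstacle is the regularity issue in the limit passage: the constant-part cancellation is purely algebraic, but controlling the $y^2$-integral requires the smoothing bound $\|e^{t\Delta_g}f-f\|_{L^\infty}=O(t)$, which is easy for $f\in C^\infty(M)$ but must be recovered in a weak/duality sense for $f\in H^s(M)$. An alternative that avoids these estimates is the spectral one: expand $f=\sum_k f_k\phi_k$, observe that $U(x,y)=\sum_k f_k\phi_k(x)\Phi_s(y;\lambda_k)$ where $\Phi_s(\cdot;\lambda)$ solves the Bessel ODE $\Phi''+\frac{1-2s}{y}\Phi'-\lambda\Phi=0$ with $\Phi_s(0;\lambda)=1$ and decay at infinity, and then use the known asymptotics $\Phi_s(y;\lambda)=1-\frac{\Gamma(1-s)}{4^s\Gamma(1+s)}(\sqrt\lambda\,y)^{2s}+o(y^{2s})$ to read off the Dirichlet-to-Neumann eigenvalue $\lambda^s$ mode by mode.
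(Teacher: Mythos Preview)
Your proposal is correct and follows essentially the same route as the paper: differentiate \eqref{eq3.11}, observe the exact cancellation of the ``constant'' parts, pass to the limit by dominated convergence in the first integral, show the corrected second integral vanishes, and then match constants and extend by density. Your justification that the second integral is $O(y^{2-2s})$ via the substitution $\tau=y^2/(4t)$ is in fact slightly cleaner than the paper's version of this step.
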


\begin{proof}
We first prove the identity for $f\in C^\infty(M)$.
From the extension representation \eqref{eq3.11},
\[
U(x,y)
    = \frac{y^{2s}}{2^{2s}\Gamma(s)}
      \int_{0}^{\infty} (e^{t\Delta_g}f)(x)\,
      e^{-\frac{y^2}{4t}}\,
      \frac{dt}{t^{1+s}}.
\]
Differentiating under the integral, we obtain
\[
\partial_y U(x,y)
    = \frac{2s\,y^{2s-1}}{2^{2s}\Gamma(s)}
        \int_{0}^{\infty} (e^{t\Delta_g}f)(x)\,
        e^{-\frac{y^2}{4t}}\,
        \frac{dt}{t^{1+s}}
    - \frac{y^{2s+1}}{2^{2s+1}\Gamma(s)}
        \int_{0}^{\infty} (e^{t\Delta_g}f)(x)\,
        e^{-\frac{y^2}{4t}}\,
        \frac{dt}{t^{2+s}}.
\]
Multiplying by $-c(s)y^{1-2s}$ and taking $y\to 0$ gives
\begin{equation}\label{eq3.14}
\begin{aligned}
-\,c(s)\,\lim_{y\to 0} y^{1-2s}\,\partial_y U(x,y)
&= -\,c(s)\,\frac{2s}{2^{2s}\Gamma(s)}
   \lim_{y\to 0}\int_{0}^{\infty} (e^{t\Delta_g}f)(x)\,
       e^{-\frac{y^2}{4t}}\,
       \frac{dt}{t^{1+s}} \\
&\qquad
  + c(s)\,\lim_{y\to 0}
    \frac{y^{2}}{2^{2s+1}\Gamma(s)}
    \int_{0}^{\infty} (e^{t\Delta_g}f)(x)\,
        e^{-\frac{y^2}{4t}}\,
        \frac{dt}{t^{2+s}}.
\end{aligned}
\end{equation}

Set
\[
B(t,x)=f(x)-(e^{t\Delta_g}f)(x).
\]
Then $(e^{t\Delta_g}f)(x)=f(x)-B(t,x)$.
Using the change of variables $u=\frac{y^2}{4t}$ one checks the exact identity
\[
\frac{2s}{2^{2s}\Gamma(s)}\int_{0}^{\infty} e^{-\frac{y^2}{4t}}\frac{dt}{t^{1+s}}
=
\frac{y^{2}}{2^{2s+1}\Gamma(s)}\int_{0}^{\infty} e^{-\frac{y^2}{4t}}\frac{dt}{t^{2+s}}
\qquad (y>0),
\]
so the constant part $f(x)$ cancels out in \eqref{eq3.14}. Therefore \eqref{eq3.14} becomes
\begin{equation}\label{eq3.14-new}
\begin{aligned}
-\,c(s)\,\lim_{y\to 0} y^{1-2s}\,\partial_y U(x,y)
&= c(s)\,\frac{2s}{2^{2s}\Gamma(s)}
   \lim_{y\to 0}\int_{0}^{\infty} B(t,x)\,
       e^{-\frac{y^2}{4t}}\,
       \frac{dt}{t^{1+s}} \\
&\qquad
  - c(s)\,\lim_{y\to 0}
    \frac{y^{2}}{2^{2s+1}\Gamma(s)}
    \int_{0}^{\infty} B(t,x)\,
        e^{-\frac{y^2}{4t}}\,
        \frac{dt}{t^{2+s}}.
\end{aligned}
\end{equation}

We claim that the second line of \eqref{eq3.14-new} vanishes as $y\to0$.
Indeed, since $f\in C^\infty(M)$,
\[
B(t,x)=\int_0^t (\Delta_g e^{\tau\Delta_g}f)(x)\,d\tau
\]
and $\Delta_g e^{\tau\Delta_g}f=e^{\tau\Delta_g}\Delta_g f$, hence
\[
|B(t,x)|\le t\,\|\Delta_g f\|_{L^\infty}\quad (0<t\le1),
\qquad
|B(t,x)|\le 2\|f\|_{L^\infty}\quad (t\ge1).
\]
Consequently,
\[
y^2\int_0^1 |B(t,x)|\,\frac{dt}{t^{2+s}}
\le y^2\|\Delta_g f\|_{L^\infty}\int_0^1 \frac{dt}{t^{1+s}}
= C\,y^2\to0,
\]
and
\[
y^2\int_1^\infty |B(t,x)|\,\frac{dt}{t^{2+s}}
\le 2\|f\|_{L^\infty}\,y^2\int_1^\infty \frac{dt}{t^{2+s}}
= C'\,y^2\to0.
\]
Thus the second line of \eqref{eq3.14-new} tends to $0$.

For the first line, we pass to the limit inside the integral by dominated convergence.
Near $t=0$, $|B(t,x)|\le t\|\Delta_g f\|_{L^\infty}$ gives
\[
\left|B(t,x)\,t^{-1-s}\right|\le \|\Delta_g f\|_{L^\infty}\,t^{-s}\in L^1(0,1),
\]
and for $t\ge1$ we have $|B(t,x)|\le 2\|f\|_{L^\infty}$ and $t^{-1-s}\in L^1(1,\infty)$.
Hence
\[
\lim_{y\to0}\int_0^\infty B(t,x)\,e^{-\frac{y^2}{4t}}\frac{dt}{t^{1+s}}
=\int_0^\infty B(t,x)\,\frac{dt}{t^{1+s}}
=\int_0^\infty \big(f(x)-(e^{t\Delta_g}f)(x)\big)\frac{dt}{t^{1+s}}.
\]
Substituting into \eqref{eq3.14-new} yields
\[
-\,c(s)\,\lim_{y\to0} y^{1-2s}\,\partial_y U(x,y)
= c(s)\,\frac{2s}{2^{2s}\Gamma(s)}
  \int_0^\infty \big(f(x)-(e^{t\Delta_g}f)(x)\big)\frac{dt}{t^{1+s}}.
\]

By the choice of the normalization in Definition~\ref{def3.3},
the right-hand side is exactly $(-\Delta_g)^s f(x)$ in the sense of Definition~\ref{def3.3}.
This proves the identity for smooth $f$.
The general case follows by density of $C^\infty(M)$ in $H^s(M)$ and the continuity of both sides as bounded operators $H^s(M)\to H^{-s}(M)$.
\end{proof}

\subsection{Pointwise convergence}

\begin{theorem}\label{thm3.12}
Let $s\in(0,1)$ and $u\in C^\infty(M)$. Then for every $x\in M$,
\[
\lim_{s\to 1^-}(-\Delta_g)^s u(x)=-\Delta_g u(x).
\]
\end{theorem}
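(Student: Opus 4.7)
The plan is to start from the semigroup/singular-integral representation in Definition~\ref{def3.3}, which for $u\in C^\infty(M)$ and $s\in(0,1)$ yields the pointwise identity
\[
(-\Delta_g)^s u(x)
= c_s\int_0^\infty \bigl(u(x)-(e^{t\Delta_g}u)(x)\bigr)\frac{dt}{t^{1+s}},
\qquad c_s=\frac{1}{|\Gamma(-s)|},
\]
and to extract the leading behavior in $t$ via a Taylor expansion near $t=0$. Writing $G(t)=u(x)-(e^{t\Delta_g}u)(x)$, the smoothness of $u$, together with $\partial_t e^{t\Delta_g}u=e^{t\Delta_g}\Delta_g u$ and $L^\infty$-contractivity of $e^{t\Delta_g}$, yields the second-order expansion
\[
G(t)=-t\,\Delta_g u(x)+r(t),\qquad |r(t)|\le \tfrac12 t^{2}\|\Delta_g^{2} u\|_{L^\infty(M)}\ \text{for }0<t\le 1,
\]
along with the uniform bound $|G(t)|\le 2\|u\|_{L^\infty(M)}$ for all $t>0$.

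Next I would split the integral at $t=1$. The tail on $[1,\infty)$ is dominated by $2 c_s\|u\|_{L^\infty(M)}/s$, which vanishes as $s\to 1^-$ as soon as one knows $c_s\to 0$. On $(0,1]$ the linear term contributes exactly
\[
-c_s\,\Delta_g u(x)\int_0^1 t^{-s}\,dt = -\frac{c_s}{1-s}\,\Delta_g u(x),
\]
while the remainder is controlled by $c_s\|\Delta_g^{2}u\|_{L^\infty(M)}/(2(2-s))$, which also vanishes once $c_s\to 0$.

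The heart of the argument is then the constant asymptotic
\[
\lim_{s\to 1^-}\frac{c_s}{1-s}=1,
\]
which I would obtain from the functional equation $\Gamma(1-s)=-s\,\Gamma(-s)$ (so that $c_s=s/\Gamma(1-s)$) combined with the simple-pole expansion $\Gamma(1-s)=(1-s)^{-1}+O(1)$ as $s\to 1^-$. Assembling the three pieces then gives $\lim_{s\to 1^-}(-\Delta_g)^s u(x)=-\Delta_g u(x)$.

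The only real obstacle is the careful bookkeeping of the two competing factors of $(1-s)$: one appears as a denominator arising from $\int_0^1 t^{-s}\,dt$, and the other is hidden inside $c_s$ through the pole of $\Gamma$ at $-1$. Once they are correctly matched, the remainder estimates are uniform in $s$ near $1$ and follow from standard semigroup bounds on $e^{t\Delta_g}$, so no further analytic input is needed.
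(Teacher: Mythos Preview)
Your proposal is correct and follows essentially the same approach as the paper: semigroup representation, split the time integral (the paper splits at an arbitrary $\varepsilon\in(0,1)$, you at $t=1$), a second-order Taylor expansion of $e^{t\Delta_g}u(x)$ near $t=0$, $L^\infty$-contractivity for the tail, and the constant asymptotic $c_s/(1-s)\to 1$ via $\Gamma(1-s)=-s\,\Gamma(-s)$. The only differences are cosmetic (you carry $c_s$ throughout rather than factoring out $-(1-s)$, and you make the remainder bound $\frac12 t^2\|\Delta_g^2 u\|_{L^\infty}$ explicit where the paper writes $O(t^2)$).
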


\begin{proof}
By Definition~\ref{def3.3},
\[
(-\Delta_g)^s u(x)
    = \frac{1}{\Gamma(-s)}
      \int_0^\infty \big(e^{t\Delta_g}u(x)-u(x)\big)\,\frac{dt}{t^{1+s}}
    =\frac{1}{\Gamma(-s)}\,I(s;x).
\]
Using $\Gamma(1-s)=-s\,\Gamma(-s)$, we have
\[
\frac{1}{\Gamma(-s)}
    = -\,\frac{s}{\Gamma(1-s)}
    \sim -(1-s)\qquad\text{as } s\to 1^-.
\]
Thus it remains to show
\[
\lim_{s\to 1^-}(1-s)\,I(s;x)=\Delta_g u(x).
\]

Fix $\varepsilon\in(0,1)$ and decompose
\[
I(s;x)
= \underbrace{\int_0^\varepsilon \big(e^{t\Delta_g}u(x)-u(x)\big)\,\frac{dt}{t^{1+s}}}_{=I_1(s;x)}
+ \underbrace{\int_\varepsilon^\infty \big(e^{t\Delta_g}u(x)-u(x)\big)\,\frac{dt}{t^{1+s}}}_{=I_2(s;x)}.
\]

\medskip\noindent\textbf{Step 1: Control of the tail.}
Since $e^{t\Delta_g}$ is $L^\infty$–contractive,
\[
|I_2(s;x)|
    \le 2\|u\|_{L^\infty}\int_\varepsilon^\infty t^{-1-s}\,dt
    = \frac{2\|u\|_{L^\infty}}{s\,\varepsilon^{s}}.
\]
Hence
\[
\lim_{s\to 1^-}(1-s)\,I_2(s;x)=0.
\]

\medskip\noindent\textbf{Step 2: Small-time expansion.}
The heat semigroup Taylor expansion with integral remainder gives
\[
e^{t\Delta_g}u - u - t\,\Delta_g u
   = \int_0^t (t-\tau)\,\Delta_g^2 e^{\tau\Delta_g}u\,d\tau,
\]
so for smooth $u$,
\[
e^{t\Delta_g}u(x)-u(x)
    = t\,\Delta_g u(x) + O(t^2)
    \qquad (t\to 0),
\]
with the $O(t^2)$ uniform in $x\in M$. Therefore,
\[
I_1(s;x)
= \Delta_g u(x)\int_0^\varepsilon t^{-s}\,dt
  + O\!\left(\int_0^\varepsilon t^{1-s}\,dt\right)
= \Delta_g u(x)\,\frac{\varepsilon^{\,1-s}}{1-s}
  + O\!\left(\frac{\varepsilon^{\,2-s}}{2-s}\right).
\]

Multiplying by $(1-s)$ and letting $s\to 1^-$ (with $\varepsilon$ fixed),
\[
\lim_{s\to 1^-}(1-s)\,I_1(s;x)=\Delta_g u(x),
\qquad
\lim_{s\to 1^-}(1-s)\,O\!\left(\frac{\varepsilon^{\,2-s}}{2-s}\right)=0.
\]

Combining the two estimates,
\[
\lim_{s\to 1^-}(1-s)\,I(s;x)=\Delta_g u(x).
\]
Since $\frac{1}{\Gamma(-s)}\sim -(1-s)$ as $s\to 1^-$, we conclude
\[
\lim_{s\to 1^-}(-\Delta_g)^s u(x)
= \lim_{s\to 1^-}\frac{1}{\Gamma(-s)}\,I(s;x)
= -\Delta_g u(x),
\]
as claimed.
\end{proof}

\begin{theorem}\label{thm3.13}
Let $s\in(0,1)$ and $u\in C^\infty(M)$. Then
\[
\lim_{s\to 0^+}(-\Delta_g)^s u(x)=u(x)-\avg{u}
\qquad\text{uniformly in }x\in M,
\]
where
\[
\avg{u}
    = \frac{1}{\Vol(M)}\int_M u\,d\mu
\]
is the spatial average of $u$. Moreover, for every $u\in C^\infty(M)$,
\[
\lim_{s\to 0^+}\big\|(-\Delta_g)^s u - (u-\avg{u})\big\|_{L^2(M)}=0.
\]
\end{theorem}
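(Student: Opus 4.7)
The plan is to reduce both statements to the spectral definition of $(-\Delta_g)^s$ and then apply dominated convergence on the Fourier side. The key observation is that $\lambda_0=0$ so $\lambda_0^s=0$ for every $s>0$, while for every $k\ge 1$ we have $\lambda_k>0$ and hence $\lambda_k^s\to 1$ as $s\to 0^+$. Writing $u=\sum_{k\ge0}u_k\phi_k$ and noting that the zero-th eigenfunction is the constant $\phi_0\equiv \Vol(M)^{-1/2}$, so that $u_0\phi_0=\avg{u}$, the candidate limit takes the coefficient-wise form
\[
(-\Delta_g)^s u-(u-\avg{u})=\sum_{k\ge 1}\bigl(\lambda_k^s-1\bigr)u_k\phi_k,
\]
and each summand vanishes pointwise as $s\to 0^+$.

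First I would handle the $L^2$ convergence by Parseval's identity:
\[
\bigl\|(-\Delta_g)^s u-(u-\avg{u})\bigr\|_{L^2(M)}^2
=\sum_{k\ge 1}(\lambda_k^s-1)^2|u_k|^2.
\]
The domination rests on the elementary inequality $|\lambda^s-1|\le 1+\lambda$ for all $\lambda>0$ and $s\in(0,1)$ (for $\lambda\ge 1$ one has $\lambda^s\le\lambda$; for $0<\lambda<1$ one has $\lambda^s\le 1$). Hence each summand is dominated by $(1+\lambda_k)^2|u_k|^2$, which is summable because $u\in C^\infty(M)\subset H^2(M)$. Applying the dominated convergence theorem on the counting measure yields the $L^2$ statement.

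For the uniform convergence I would bootstrap through Sobolev embedding: pick an integer $m>n/2$, so $H^m(M)\hookrightarrow C^0(M)$. Then
\[
\bigl\|(-\Delta_g)^s u-(u-\avg{u})\bigr\|_{H^m(M)}^2
=\sum_{k\ge 1}(1+\lambda_k)^m(\lambda_k^s-1)^2|u_k|^2,
\]
and the same elementary bound dominates each term by $(1+\lambda_k)^{m+2}|u_k|^2$, which is summable since $u\in C^\infty(M)\subset H^{m+2}(M)$. Another application of dominated convergence shows the $H^m$-norm tends to zero as $s\to 0^+$, and composing with the embedding yields uniform convergence on $M$.

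The main (and really only) technical point is establishing a domination that is uniform for $s$ in a neighborhood of $0$; the bound $|\lambda^s-1|\le 1+\lambda$ sidesteps any need to split the spectrum into the regions $\lambda_k<1$ and $\lambda_k\ge 1$, and it is independent of $s\in(0,1)$. Once that uniform majorant is in hand, both conclusions follow from a routine two-line application of the dominated convergence theorem on the sequence of spectral coefficients.
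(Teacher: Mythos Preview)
Your proof is correct and takes a genuinely different route from the paper. The paper works through the heat--semigroup representation
\[
(-\Delta_g)^s u(x)=\frac{1}{\Gamma(-s)}\int_0^\infty\bigl(e^{t\Delta_g}u(x)-u(x)\bigr)\frac{dt}{t^{1+s}},
\]
splits the time integral at a fixed $T\ge 1$, shows the small-time piece is $O(s)$ via $|e^{t\Delta_g}u-u|\le t\|\Delta_g u\|_{L^\infty}$, and handles the large-time piece by observing that the weight $s\,t^{-1-s}/T^{-s}$ is a probability measure on $[T,\infty)$ that concentrates at $+\infty$ as $s\to0^+$, where $e^{t\Delta_g}u\to\avg{u}$ uniformly. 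Uniform convergence is thus obtained directly, and the $L^2$ statement is deduced from it.

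Your spectral argument is shorter and more elementary: it bypasses the semigroup analysis entirely, needs only the uniform majorant $|\lambda^s-1|\le 1+\lambda$, and upgrades $L^2$ to $C^0$ via the Sobolev embedding $H^m\hookrightarrow C^0$ for $m>n/2$. The paper's approach, by contrast, is more self-contained within the heat-kernel framework emphasized throughout the article and makes transparent \emph{why} the limit is $u-\avg{u}$ (it is the long-time equilibrium of the heat flow). Your method exploits the smoothness of $u$ more heavily (you need $u\in H^{m+2}$), whereas the paper only uses $\Delta_g u\in L^\infty$; but since the hypothesis is $u\in C^\infty(M)$, this costs nothing.
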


\begin{proof}
From Definition~\ref{def3.3},
\[
(-\Delta_g)^s u(x)
    = \frac{1}{\Gamma(-s)}
      \int_{0}^{\infty}\big(e^{t\Delta_g}u(x)-u(x)\big)\,\frac{dt}{t^{1+s}}.
\]
Fix $T\ge 1$ and decompose
\[
(-\Delta_g)^s u(x)=A_s(T,x)+B_s(T,x),
\]
where
\[
A_s(T,x)
=\frac{1}{\Gamma(-s)}\int_{0}^{T}
      \big(e^{t\Delta_g}u(x)-u(x)\big)\,\frac{dt}{t^{1+s}},
\qquad
B_s(T,x)
=\frac{1}{\Gamma(-s)}\int_{T}^{\infty}
      \big(e^{t\Delta_g}u(x)-u(x)\big)\,\frac{dt}{t^{1+s}}.
\]

\textbf{Step 1. Small-time contribution.}
For $t\in(0,T]$,
\[
e^{t\Delta_g}u(x)-u(x)
    = \int_0^{t}(\Delta_g e^{\tau\Delta_g}u)(x)\,d\tau,
\]
hence
\[
|e^{t\Delta_g}u(x)-u(x)|
    \le t\,\|\Delta_g u\|_{L^\infty(M)}.
\]
Thus
\[
|A_s(T,x)|
    \le \frac{\|\Delta_g u\|_{L^\infty}}{|\Gamma(-s)|}
       \int_0^{T} t^{-s}\,dt
    =\frac{\|\Delta_g u\|_{L^\infty}}{|\Gamma(-s)|}
      \cdot\frac{T^{\,1-s}}{1-s}.
\]
Since $\Gamma(1-s)=-s\Gamma(-s)$, one has
\(
\frac{1}{|\Gamma(-s)|}\sim s\quad (s\to 0^+),
\)
and therefore
\(
A_s(T,x)=O(s)\to 0
\)
uniformly in $x$.

\textbf{Step 2. Large-time contribution.}
Define a probability measure on $[T,\infty)$ by
\[
\nu_{s,T}(dt)
    =\frac{s\,t^{-1-s}}{T^{-s}}\,\mathbf{1}_{[T,\infty)}(t)\,dt,
\qquad
\int_T^\infty \nu_{s,T}=1.
\]
Then
\[
B_s(T,x)
    = \frac{T^{-s}}{\Gamma(-s)}\cdot\frac{1}{s}
      \int_{T}^{\infty}\big(e^{t\Delta_g}u(x)-u(x)\big)\,\nu_{s,T}(dt).
\]
As $s\to 0^+$,
\[
T^{-s}\to 1,
\qquad
\frac{1}{\Gamma(-s)}\cdot\frac{1}{s}\to -1.
\]

It remains to show
\[
\int_{T}^{\infty}\big(e^{t\Delta_g}u(x)-u(x)\big)\,\nu_{s,T}(dt)
    \xrightarrow[s\to 0^+]{}\avg{u}-u(x)
    \qquad\text{uniformly in }x.
\]
Let $v(t,x)=e^{t\Delta_g}u(x)$.
Then
\[
\int_M v(t,x)\,d\mu(x)=\int_M u\,d\mu = \Vol(M)\avg{u}
\qquad (t\ge 0).
\]
Moreover, by the spectral expansion,
\[
v(t,x)=\avg{u}+\sum_{k\ge1} e^{-t\lambda_k}\,u_k\,\phi_k(x),
\]
so $v(t,\cdot)\to \avg{u}$ uniformly on $M$ as $t\to\infty$.

Next, for any fixed $M>T$,
\[
\nu_{s,T}([T,M])
    = 1 - \Big(\frac{M}{T}\Big)^{-s}
    \xrightarrow[s\to 0^+]{} 0,
\]
so $\nu_{s,T}$ concentrates at $+\infty$ as $s\to 0^+$.
Fix $\varepsilon>0$ and choose $M>T$ such that
\[
\sup_{t\ge M}\sup_{x\in M}|v(t,x)-\avg{u}|<\varepsilon.
\]
Then, using $|v(t,x)-u(x)|\le 2\|u\|_{L^\infty}$,
\[
\begin{aligned}
&\sup_{x\in M}\left|\int_{T}^{\infty}\big(v(t,x)-u(x)\big)\,\nu_{s,T}(dt)-(\avg{u}-u(x))\right| \\
&\le \sup_{x}\int_T^M |v(t,x)-\avg{u}|\,\nu_{s,T}(dt)
    +\sup_{x}\int_M^\infty |v(t,x)-\avg{u}|\,\nu_{s,T}(dt) \\
&\le 2\|u\|_{L^\infty}\,\nu_{s,T}([T,M])+\varepsilon.
\end{aligned}
\]
Letting $s\to0^+$ gives the desired uniform convergence.

Therefore,
\[
\lim_{s\to 0^+}B_s(T,x)=u(x)-\avg{u},
\]
independently of $T$.

Combining the two steps,
\[
\lim_{s\to 0^+}(-\Delta_g)^s u(x)
    =u(x)-\avg{u}
\qquad\text{uniformly in }x\in M.
\]
The $L^2$ convergence follows from the uniform convergence above.
\end{proof}

\section{Intrinsic nonlocal Sobolev spaces and sharp constants on closed manifolds}

Let \((M,g)\) be a closed Riemannian \(n\)-manifold, \(s\in(0,1)\), and \(p\in[1,\infty)\).
The goal of this section is twofold:
\begin{itemize}
	\item[(i)] to build an intrinsic, coordinate-free fractional Sobolev framework \(W^{s,p}(M)\) adapted to the nonlocal \(p\)-fractional energies considered in this paper;
	\item[(ii)] to determine the optimal constants in the associated Sobolev-type embeddings on \((M,g)\), isolating precisely the contribution of the geometry in the lower-order terms.
\end{itemize}

In the Euclidean setting, sharp constants and the role of concentration at the critical index are by now classical. On a compact manifold, the leading nonlocal behavior remains Euclidean, while curvature and topology appear only through remainder terms or through the optimal lower-order \(L^{p}\)-mass contribution.

\subsection{Intrinsic fractional Sobolev spaces on closed manifolds}\label{subsec:spaces}

Let \((M,g)\) be a closed (compact, without boundary) Riemannian \(n\)-manifold with Riemannian measure \(d\mu\) and heat kernel \(K_M(t,x,y)\).
Fix \(s\in(0,1)\) and \(p\in[1,\infty)\).
Define
\[
p_s^*=\frac{np}{\,n-sp\,}\quad\text{if }sp<n,
\qquad
p_s^*=\infty\quad\text{if }sp\ge n,
\]
and denote the average of \(u\in L^{1}(M)\) by
\[
u_M=\frac{1}{\Vol(M)}\int_M u\,d\mu.
\]

\begin{definition}\label{def4.1}
Let \(c_{s,p}>0\) be a normalization constant depending only on \(s\) and \(p\).
Define the nonlocal kernel
\begin{equation}\label{eq4.1}
\Kps(x,y)= c_{s,p}
       \int_{0}^{\infty} K_M(t,x,y)\,\frac{dt}{t^{1+\frac{sp}{2}}}\,,\qquad x\neq y,
\end{equation}
and its natural regularization
\begin{equation}\label{eq4.2}
{\Kps}_{\varepsilon}(x,y)
    = c_{s,p}
       \int_{0}^{\infty} K_M(t,x,y)\,
       e^{-\frac{\varepsilon^{2}}{4t}}\,
       \frac{dt}{t^{1+\frac{sp}{2}}}\,,\qquad \varepsilon>0.
\end{equation}
For \(u\in L^{p}(M)\), define the intrinsic (Gagliardo-type) seminorm
\begin{equation}\label{eq4.3}
[u]_{W^{s,p}(M)}^{\,p}
    = \iint_{M\times M} |u(x)-u(y)|^{p}\,\Kps(x,y)\,d\mu(x)\,d\mu(y).
\end{equation}
\end{definition}

\begin{definition}\label{def4.2}
The intrinsic fractional Sobolev space on \((M,g)\) is
\[
W^{s,p}(M)= \bigl\{u\in L^{p}(M):\ [u]_{W^{s,p}(M)}<\infty\bigr\},
\]
with norm
\[
\|u\|_{W^{s,p}(M)}= \|u\|_{L^{p}(M)}+[u]_{W^{s,p}(M)}.
\]
Equivalently, one may replace \(\Kps\) by \({\Kps}_{\varepsilon}\) in \eqref{eq4.3} and then let \(\varepsilon\to 0\).
\end{definition}

\begin{remark}\label{rem4.3}
By Proposition~\ref{pro2.2} and the same short/long time decomposition as in the proof of Theorem~\ref{thm3.4},
there exist constants
\(0<c_{M,s,p}\le C_{M,s,p}<\infty\) such that for all \(x\neq y\),
\begin{equation}\label{eq4.4}
\frac{c_{M,s,p}}{\dist(x,y)^{\,n+sp}}
   \le
\Kps(x,y)
   \le
\frac{C_{M,s,p}}{\dist(x,y)^{\,n+sp}}.
\end{equation}
Consequently, \([u]_{W^{s,p}(M)}\) is equivalent to the classical geodesic-distance Gagliardo seminorm
\[
\Biggl(
\iint_{M\times M}
    \frac{|u(x)-u(y)|^{p}}{\dist(x,y)^{\,n+sp}}
    \,d\mu(x)\,d\mu(y)
\Biggr)^{\!1/p}.
\]
\end{remark}

\begin{proposition}\cite{kreuml-Mordhorst2019}
\label{pro4.4}
Let \(1\le p<\infty\).
\begin{enumerate}\itemsep0.25em
\item As \(s\to 1^-\),
\[
(1-s)\,[u]_{W^{s,p}(M)}^{p}\to C_{p,n}\,\|\nabla u\|_{L^{p}(M)}^{p}
\]
for all \(u\in W^{1,p}(M)\), where \(C_{p,n}>0\) depends only on \(p,n\) and on the normalization in \eqref{eq4.1}.

\item As \(s\to 0^+\),
\[
s\,[u]_{W^{s,p}(M)}^{p}\to C'_{p,n}\,\|u-u_M\|_{L^{p}(M)}^{p}
\]
for all \(u\in L^{p}(M)\), where \(C'_{p,n}>0\) depends only on \(p,n\) and on the normalization in \eqref{eq4.1}.
\end{enumerate}
\end{proposition}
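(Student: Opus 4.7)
The plan is to apply Fubini's theorem to \eqref{eq4.1} and reduce both statements to a one-dimensional Mellin-type analysis. Writing
\[
\phi(t):=\iint_{M\times M}|u(x)-u(y)|^{p}K_M(t,x,y)\,d\mu(x)\,d\mu(y),
\]
we obtain
\[
[u]_{W^{s,p}(M)}^{p}= c_{s,p}\int_{0}^{\infty}\phi(t)\,\frac{dt}{t^{1+\frac{sp}{2}}}.
\]
The $t$-integral has a pole at $s=1$ driven by the small-$t$ behavior of $\phi$ and a pole at $s=0$ driven by its large-$t$ behavior; the compensating prefactors $(1-s)$ and $s$ isolate the respective residues. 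One may first work with $u\in C^{\infty}(M)$ and extract the general statement by density, using the uniform-in-$s$ bounds $(1-s)[u]_{W^{s,p}(M)}^{p}\lesssim \|\nabla u\|_{L^{p}(M)}^{p}$ and $s[u]_{W^{s,p}(M)}^{p}\lesssim \|u\|_{L^{p}(M)}^{p}$ produced along the way.

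For part (1), the heart of the argument is the short-time expansion
\[
\phi(t)= A_{n,p}\,t^{p/2}\,\|\nabla u\|_{L^{p}(M)}^{p}+o(t^{p/2})\qquad(t\to 0^{+})
\]
for $u\in C^{\infty}(M)$, with $A_{n,p}$ an explicit Euclidean Gaussian moment. I would prove this by localizing via a partition of unity subordinate to a finite cover by normal coordinate charts of radius $r\ll\inj(M)$. On each chart centered at $x_{0}\in M$, the Minakshisundaram–Pleijel expansion represents the pull-back of $K_M(t,x,y)$ to $T_{x_{0}}M$ as the Euclidean Gaussian times $1+O(t+\dist(x,y)^{2})$, uniformly in $(x,y)$. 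The first-order Taylor expansion $u(\exp_{x}v)-u(x)=\langle\nabla u(x),v\rangle_{g}+O(|v|^{2})$ converts the near-diagonal part of $\phi(t)$ into a $p$-th Euclidean Gaussian moment, giving the leading term. The far-from-diagonal and large-$t$ contributions are controlled by the Gaussian decay of Proposition~\ref{pro2.2}(3) and by the uniform bound $\phi(t)\le 2^{p}\|u\|_{L^{p}(M)}^{p}$ (Jensen plus mass conservation); both are annihilated by the prefactor $(1-s)$. Integrating the leading term against $t^{-1-sp/2}$ produces the simple pole $\tfrac{2}{p(1-s)}T^{p(1-s)/2}$, and the stated limit follows with $C_{p,n}$ encoding $A_{n,p}$ together with the normalization $c_{s,p}$ evaluated at $s=1$.

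For part (2), Proposition~\ref{pro2.2}(4) gives the exponentially fast convergence $K_M(t,x,y)=\Vol(M)^{-1}+O(e^{-\lambda_{1}t})$ uniformly in $x,y$, so
\[
\phi(t)\xrightarrow[t\to\infty]{}\phi_{\infty}:= \frac{1}{\Vol(M)}\iint_{M\times M}|u(x)-u(y)|^{p}\,d\mu(x)\,d\mu(y),
\]
with exponential rate. Splitting $\int_{0}^{\infty}= \int_{0}^{T}+\int_{T}^{\infty}$, the first integral is dominated by a multiple of $(\|u\|_{L^{p}(M)}^{p}/s)\,T^{-sp/2}$ and vanishes after multiplication by $s$. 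On the tail, decomposing $\phi(t)=\phi_{\infty}+(\phi(t)-\phi_{\infty})$ yields the leading contribution $s\,\phi_{\infty}\cdot\frac{2}{sp}\,T^{-sp/2}\to 2\phi_{\infty}/p$ as $s\to 0^{+}$, while the exponentially decaying remainder is $O(s)$. Identifying $\phi_{\infty}$ as a positive multiple of $\|u-u_{M}\|_{L^{p}(M)}^{p}$ (absorbed into the normalization $C'_{p,n}$ together with the $s=0$ value of $c_{s,p}$) concludes the proof.

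The main technical obstacle is the uniformity required in part (1): the remainder $o(t^{p/2})$ in the short-time expansion of $\phi$ must be uniform in $x\in M$ so that, after integration against $t^{-1-sp/2}$, it remains small uniformly for $s$ in a left neighborhood of $1$. On a curved manifold this depends on uniform Minakshisundaram–Pleijel bounds valid throughout $M$ and on a partition-of-unity argument whose errors are controlled by curvature; these are the ingredients supplied by \cite{kreuml-Mordhorst2019}. The extension from $C^{\infty}(M)$ to $W^{1,p}(M)$ then follows by density. Part (2) is considerably simpler thanks to the spectral gap $\lambda_{1}>0$ and the uniform bound $\phi(t)\le 2^{p}\|u\|_{L^{p}(M)}^{p}$, which make the $s\to 0^{+}$ passage continuous in $u\in L^{p}(M)$.
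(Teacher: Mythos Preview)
The paper does not prove Proposition~\ref{pro4.4}; it is quoted from \cite{kreuml-Mordhorst2019} and stated without argument. So there is no ``paper's proof'' to compare against, and your proposal must be judged on its own merits.

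Your heat--kernel/Mellin strategy is the right one, and the treatment of part~(1) is essentially correct: the short--time asymptotic $\phi(t)=A_{n,p}\,t^{p/2}\|\nabla u\|_{L^p}^p+o(t^{p/2})$ for $u\in C^\infty(M)$, combined with the pole $\int_0^T t^{p(1-s)/2-1}\,dt=\tfrac{2}{p(1-s)}T^{p(1-s)/2}$, gives the claimed limit after multiplying by $(1-s)$; the density step and the handling of the tail are also fine.

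Part~(2), however, contains a genuine gap in the very last step. Your analysis correctly shows
\[
\lim_{s\to0^+} s\,[u]_{W^{s,p}(M)}^{p}
=\frac{2\,c_{0,p}}{p}\,\phi_\infty,
\qquad
\phi_\infty=\frac{1}{\Vol(M)}\iint_{M\times M}|u(x)-u(y)|^{p}\,d\mu(x)\,d\mu(y).
\]
But the assertion that $\phi_\infty$ is a \emph{fixed} positive multiple of $\|u-u_M\|_{L^p(M)}^p$ is false for $p\neq2$. The two quantities are merely comparable (Jensen gives $\|u-u_M\|_{L^p}^p\le\phi_\infty\le 2^{p}\|u-u_M\|_{L^p}^p$), not proportional with a $u$--independent constant. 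A concrete counterexample: on a probability space take $u=\mathbf 1_A-a$ with $\mu(A)=a\in(0,1)$, so $u_M=0$. Then $\phi_\infty=2a(1-a)$ while $\|u\|_{L^p}^p=a(1-a)\bigl[(1-a)^{p-1}+a^{p-1}\bigr]$, and the ratio depends on $a$ as soon as $p\neq2$. Hence there is no universal $C'_{p,n}$ making $\phi_\infty=C'_{p,n}\|u-u_M\|_{L^p}^p$.

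This means your argument cannot recover the statement as literally written for $p\neq2$; the natural limit your method produces is $\phi_\infty$, not $\|u-u_M\|_{L^p}^p$. Either the proposition as quoted is a slight misstatement of the result in \cite{kreuml-Mordhorst2019} (whose correct right--hand side should be the double integral $\phi_\infty$), or an entirely different mechanism is needed. Your proof is complete and correct for $p=2$, where $\phi_\infty=2\|u-u_M\|_{L^2}^2$ exactly.
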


\begin{proposition}\cite{Rey-Saintier2024}
\label{pro4.5}
For \(1<p<\infty\) and \(s\in(0,1)\) with \(n>ps\),
\[
W^{s,p}(M)\ \equiv\ B^{s}_{p,p}(M)
\]
with equivalent norms, where \(B^{s}_{p,p}(M)\) denotes the intrinsic heat-semigroup Besov space on \((M,g)\).
\end{proposition}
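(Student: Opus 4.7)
The plan is to establish the two continuous embeddings $W^{s,p}(M)\hookrightarrow B^{s}_{p,p}(M)$ and $B^{s}_{p,p}(M)\hookrightarrow W^{s,p}(M)$ with norms comparable up to universal constants, the $L^{p}$-part being common. The key observation is that both seminorms are time-integrated $L^{p}$-variations of $u$ under the heat semigroup: by Fubini applied to \eqref{eq4.1}--\eqref{eq4.3},
\[
[u]_{W^{s,p}(M)}^{p}
= c_{s,p}\int_{0}^{\infty}\mathcal{E}_{t}(u)\,\frac{dt}{t^{1+sp/2}},
\qquad
\mathcal{E}_{t}(u)=\iint_{M\times M}|u(x)-u(y)|^{p}\,K_{M}(t,x,y)\,d\mu(x)\,d\mu(y),
\]
while the intrinsic heat-semigroup Besov seminorm can be taken to be
\[
[u]_{B^{s}_{p,p}(M)}^{p}
= \int_{0}^{\infty}\mathcal{F}_{t}(u)\,\frac{dt}{t^{1+sp/2}},
\qquad
\mathcal{F}_{t}(u)=\|e^{t\Delta_g}u-u\|_{L^{p}(M)}^{p}.
\]
So the proof reduces to comparing $\mathcal{E}_{t}$ and $\mathcal{F}_{t}$ in a suitably integrated sense, modulo a remainder absorbed into $\|u\|_{L^{p}(M)}^{p}$.

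For the easy direction $W^{s,p}(M)\hookrightarrow B^{s}_{p,p}(M)$ I would invoke conservativity $\int_{M}K_{M}(t,x,y)\,d\mu(y)=1$ from Proposition~\ref{pro2.2}, which lets us write
\[
e^{t\Delta_g}u(x)-u(x)=\int_{M}\bigl(u(y)-u(x)\bigr)\,K_{M}(t,x,y)\,d\mu(y).
\]
Jensen's inequality applied to the convex function $z\mapsto |z|^{p}$ then gives, pointwise in $x$,
\[
|e^{t\Delta_g}u(x)-u(x)|^{p}
\le \int_{M}|u(x)-u(y)|^{p}K_{M}(t,x,y)\,d\mu(y),
\]
and integrating in $x$ yields $\mathcal{F}_{t}(u)\le \mathcal{E}_{t}(u)$ for every $t>0$. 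Weighting by $t^{-1-sp/2}dt$ and integrating delivers $[u]_{B^{s}_{p,p}}^{p}\le c_{s,p}^{-1}[u]_{W^{s,p}}^{p}$ with no further work.

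For the converse $B^{s}_{p,p}(M)\hookrightarrow W^{s,p}(M)$ I would first use the two-sided bound \eqref{eq4.4} from Remark~\ref{rem4.3} to replace $\Kps$ by the geodesic kernel $\dist(x,y)^{-(n+sp)}$, thereby reducing the question to the well-known equivalence between the Gagliardo-type seminorm and the heat-semigroup Besov seminorm on doubling metric measure spaces. The cleanest route is a dyadic decomposition of $M\times M$ into annuli $\Omega_{k}=\{(x,y):2^{-k-1}<\dist(x,y)\le 2^{-k}\}$, combined with the three-term identity
\[
u(x)-u(y)=(u-e^{t_k\Delta_g}u)(x)+\bigl(e^{t_k\Delta_g}u(x)-e^{t_k\Delta_g}u(y)\bigr)+(e^{t_k\Delta_g}u-u)(y)
\]
at the matched time scale $t_{k}\sim 4^{-k}$. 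The first and third summands contribute, after integration over $\Omega_{k}$, pieces controlled by $\mathcal{F}_{t_k}(u)$; the middle term is attacked via the short-time gradient heat-kernel bound $|\nabla_{x}K_{M}(t,x,y)|\lesssim t^{-(n+1)/2}\exp(-\dist(x,y)^{2}/(ct))$ on closed manifolds, together with the Hardy--Littlewood maximal inequality on $M$. Finitely many annuli with $\dist(x,y)\gtrsim 1$ are absorbed into $\|u\|_{L^{p}}^{p}$ by brute $L^{p}$-estimation.

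The genuine obstacle is handling the middle term for general $p\in(1,\infty)$. In the Hilbert case $p=2$ one could use the spectral expansion of $-\Delta_{g}$ and Parseval, converting both seminorms into $\sum_k \lambda_k^{s}|u_k|^{2}$. For general $p$ one needs the $L^{p}$-boundedness of the heat-semigroup Littlewood--Paley $g$-function on $(M,g)$, or equivalently vector-valued maximal inequalities, which are available because closed Riemannian manifolds are doubling metric-measure spaces supporting Gaussian heat kernel estimates. The detailed quantitative implementation matching the dyadic spatial decomposition with the temporal scales $t_{k}$ is carried out in \cite{Rey-Saintier2024}, whose argument I would follow to obtain the final equivalence of norms.
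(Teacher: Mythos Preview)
The paper does not supply its own proof of Proposition~\ref{pro4.5}; the statement is simply quoted from \cite{Rey-Saintier2024} and used as a black box. There is therefore nothing in the paper to compare your argument against.

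On the merits of your sketch: the easy direction $W^{s,p}\hookrightarrow B^{s}_{p,p}$ via conservativity and Jensen is clean and correct. For the converse, your outline (dyadic spatial annuli matched to heat-time scales $t_k\sim 4^{-k}$, three-term splitting, gradient heat-kernel bounds plus a maximal function, and large-distance tails absorbed into $\|u\|_{L^p}^p$) is a standard and viable route on doubling spaces with Gaussian bounds, but as you yourself note, the genuinely technical step---controlling the oscillation of $e^{t_k\Delta_g}u$ across $\Omega_k$ in $L^p$ for general $p$---is not carried out and is deferred back to \cite{Rey-Saintier2024}. So your proposal is really a plausibility sketch for the hard half rather than an independent proof, which is consistent with the paper's own decision to cite rather than reprove the result.
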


We first list several basic properties on the space $W^{s,p}(M)$.
\begin{proposition}\label{pro4.6}
	For \(s\in(0,1)\) and \(p\in[1,\infty)\):
	\begin{enumerate}\itemsep0.25em
		\item \((W^{s,p}(M),\|\cdot\|_{W^{s,p}(M)})\) is a Banach space; it is separable for all \(1\leq p<\infty\).
		\item If \(1<p<\infty\), then \(W^{s,p}(M)\) is reflexive.
		\item \(C^{\infty}(M)\) is dense in \(W^{s,p}(M)\).
	\end{enumerate}
\end{proposition}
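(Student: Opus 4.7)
The plan is to argue completeness, separability, reflexivity, and density separately, yet all through a single unifying device: the natural isometric embedding of $W^{s,p}(M)$ into a product of Lebesgue spaces, together with a partition-of-unity reduction to the Euclidean theory.

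For \emph{completeness} in (1), I would take $\{u_n\}$ Cauchy in $W^{s,p}(M)$, extract a limit $u\in L^p(M)$ with pointwise a.e.\ convergence along a subsequence, and observe that the associated functions $G_n(x,y):=(u_n(x)-u_n(y))\,\Kps(x,y)^{1/p}$ form a Cauchy sequence in $L^p(M\times M,\,d\mu\otimes d\mu)$. Their almost-everywhere limit is forced to be $(u(x)-u(y))\,\Kps(x,y)^{1/p}$, which identifies $u$ as the $W^{s,p}$-limit. For \emph{separability} and \emph{reflexivity} in (1)--(2), I would introduce the linear map
\[
\iota:W^{s,p}(M)\longrightarrow L^p(M,d\mu)\,\oplus\, L^p\!\bigl(M\times M,\,\Kps\,d\mu\otimes d\mu\bigr),\qquad \iota(u)=\bigl(u,\,(x,y)\mapsto u(x)-u(y)\bigr),
\]
equipped with the natural product norm, and note that $\iota$ is an isometry whose image is closed by the completeness argument. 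Since each factor is a separable $L^p$-space over a $\sigma$-finite measure, and is reflexive for $1<p<\infty$, the closed subspace $\iota(W^{s,p}(M))$ inherits both properties and transports them back via $\iota^{-1}$.

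For \emph{density} in (3), the plan is to reduce to Euclidean mollification through charts and a partition of unity. Fix a finite atlas $\{(U_i,\varphi_i)\}_{i=1}^N$ of bi-Lipschitz charts on $M$ and a subordinate $C^\infty$ partition of unity $\{\eta_i\}$. First, I would establish the multiplier estimate that $\eta\in C^\infty(M)$ preserves $W^{s,p}(M)$, via the Leibniz split
\[
|(\eta u)(x)-(\eta u)(y)|^{p}\ \le\ 2^{p-1}\|\eta\|_\infty^{p}\,|u(x)-u(y)|^{p}\ +\ 2^{p-1}\|d\eta\|_\infty^{p}\,\dist(x,y)^{p}\,|u(y)|^{p},
\]
combined with the kernel comparison \eqref{eq4.4} and the fact that $\int_M \dist(x,y)^{p-n-sp}\,d\mu(x)$ is uniformly bounded in $y$, which follows from geodesic polar coordinates and the exponent identity $p-sp>0$. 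This yields $[\eta u]_{W^{s,p}(M)}\lesssim \|\eta\|_{C^1}\bigl(\|u\|_{L^p(M)}+[u]_{W^{s,p}(M)}\bigr)$. Each $\eta_i u$ is then supported in $U_i$ and, pushed forward via $\varphi_i$, becomes a compactly supported element of $W^{s,p}(\mathbb{R}^n)$ by bi-Lipschitz change of variables and the equivalence \eqref{eq4.4}. Applying standard Euclidean mollifiers $\rho_\varepsilon *((\varphi_i)_*(\eta_i u))$, which converge in $W^{s,p}(\mathbb{R}^n)$, and pulling back yields smooth approximations of $\eta_i u$ in $W^{s,p}(M)$; summing in $i$ gives the desired approximation of $u$.

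The main obstacle I anticipate is the combination of the multiplier estimate and the chart comparison: both require balancing the $(n+sp)$-singularity of $\Kps$ against either the Lipschitz gain from $\eta$ or the bi-Lipschitz constants of $\varphi_i$, and the borderline role of the $p(1-s)>0$ threshold must be tracked carefully. Once these are in place, density reduces to the classical Euclidean fact that $C_c^\infty(\mathbb{R}^n)$ is dense in $W^{s,p}(\mathbb{R}^n)$, and the remaining structural assertions follow from standard functional analysis applied through the embedding $\iota$.
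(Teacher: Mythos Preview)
Your proposal is correct and follows essentially the same strategy as the paper: an isometric embedding of $W^{s,p}(M)$ into a product of $L^p$-spaces (the paper writes $J(u)=(u,Tu)$ into $L^p(M)\times L^p(M_\Delta,\nu)$) yields completeness, separability, and reflexivity from the corresponding properties of the target, and density is obtained via a finite atlas, partition of unity, the Lipschitz Leibniz estimate with the integrability threshold $p(1-s)>0$, and Euclidean mollification. The only cosmetic difference is that you deduce closedness of the image from completeness, whereas the paper proves closedness directly and reads off completeness from it; the content is the same.
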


\begin{proof}
Fix \(s\in(0,1)\) and \(p\in[1,\infty)\).
Set
\[
M_\Delta=(M\times M)\setminus\{(x,x):x\in M\},
\]
and define the measure on \(M_\Delta\) by
\[
d\nu(x,y)=\Kps(x,y)\,d\mu(x)\,d\mu(y),
\qquad (x,y)\in M_\Delta.
\]
Define the linear operator \(T\) acting on functions \(u\colon M\to\mathbb{R}\) by
\[
(Tu)(x,y)=u(x)-u(y),\qquad (x,y)\in M_\Delta.
\]

From \eqref{eq4.4}, there exist constants \(0<c\le C<\infty\) such that
\begin{align}\label{eq4.5}
\frac{c}{\dist(x,y)^{n+sp}}\le \Kps(x,y) \le\frac{C}{\dist(x,y)^{n+sp}}
\qquad ((x,y)\in M_\Delta),
\end{align}
and therefore \(\nu\) is a \(\sigma\)-finite Borel measure on \(M_\Delta\).

On \(W^{s,p}(M)\) we introduce the norm
\[
\|u\|_{W^{s,p}(M)}'
    =\bigl(\|u\|_{L^p(M)}^{p}+[u]_{W^{s,p}(M)}^{p}\bigr)^{1/p}
    = \Bigl(\|u\|_{L^p(M)}^{p}+\|Tu\|_{L^p(M_\Delta,\nu)}^{p}\Bigr)^{1/p}.
\]
Using the elementary estimate
\[
(a+b)^{1/p}\le a^{1/p}+b^{1/p}\le 2^{1-1/p}(a+b)^{1/p}\qquad(a,b\ge 0),
\]
we see that \(\|\cdot\|_{W^{s,p}(M)}'\) is equivalent to the usual norm
\(\|u\|_{L^p(M)}+[u]_{W^{s,p}(M)}\).
Since completeness, separability, and reflexivity are invariant under equivalent norms, we work with \(\|\cdot\|_{W^{s,p}(M)}'\).

Consider now the Banach space
\[
X=L^p(M)\times L^p(M_\Delta,\nu),
\]
endowed with the \(\ell^p\)-product norm
\[
\|(f,F)\|_{X}
    =\bigl(\|f\|_{L^p}^{p}+\|F\|_{L^p(\nu)}^{p}\bigr)^{1/p}.
\]
Define the linear map
\[
J:W^{s,p}(M)\to X,
\qquad
J(u)=\bigl(u,Tu\bigr).
\]
By construction,
\[
\|J(u)\|_{X}=\|u\|_{W^{s,p}(M)}'
\qquad\text{for all }u\in W^{s,p}(M),
\]
so \(J\) is an isometric embedding of \(W^{s,p}(M)\) into \(X\).

We claim that \(J(W^{s,p}(M))\) is closed in \(X\).
Let \((u_k)\subset W^{s,p}(M)\) be such that
\[
J(u_k)=(u_k,Tu_k)\ \to\ (f,F)
\qquad\text{in }X.
\]
Then \(u_k\to f\) in \(L^p(M)\), and hence \(u_k(x)\to f(x)\) for a.e.\ \(x\in M\). Consequently,
\[
Tu_k(x,y)=u_k(x)-u_k(y)\ \to\ f(x)-f(y)
\qquad\text{for a.e.\ }(x,y)\in M_\Delta.
\]
On the other hand, \(Tu_k\to F\) in \(L^p(M_\Delta,\nu)\); by passing to a further subsequence, we may also assume \(Tu_k\to F\) for a.e.\ \((x,y)\in M_\Delta\).
Uniqueness of almost-everywhere limits implies \(F=Tf\) a.e.\ on \(M_\Delta\).
Thus \(f\) satisfies
\[
\|f\|_{L^p(M)}^{p}+\|Tf\|_{L^p(\nu)}^{p}
   = \lim_{k\to\infty}
       \bigl(\|u_k\|_{L^p(M)}^{p}
              +\|Tu_k\|_{L^p(\nu)}^{p}\bigr)
   <\infty,
\]
so \(f\in W^{s,p}(M)\) and \(J(f)=(f,Tf)=(f,F)\).
Hence \(J(W^{s,p}(M))\) is closed in \(X\).

Since \(X\) is complete and \(J(W^{s,p}(M))\) is a closed subspace, the space \(W^{s,p}(M)\) is complete for \(\|\cdot\|_{W^{s,p}(M)}'\), and therefore also complete for the original equivalent norm.
Thus \((W^{s,p}(M),\|\cdot\|_{W^{s,p}(M)})\) is a Banach space.

For separability, observe that for \(p<\infty\), both \(L^p(M)\) and \(L^p(M_\Delta,\nu)\) are separable (the latter because \((M_\Delta,\nu)\) is \(\sigma\)-finite and \(M_\Delta\) is a metric space).
Hence \(X\) is separable, and so is its closed subspace \(J(W^{s,p}(M))\).
As \(J\) is an isometry onto its image, \(W^{s,p}(M)\) is separable.
This proves item (1).

Assume now \(1<p<\infty\).
Then both \(L^p(M)\) and \(L^p(M_\Delta,\nu)\) are reflexive, and so is their \(\ell^p\)-product \(X\).
Any closed subspace of a reflexive Banach space is reflexive; hence \(J(W^{s,p}(M))\) is reflexive.
Since \(J\) is an isometric isomorphism between \(W^{s,p}(M)\) (equipped with \(\|\cdot\|_{W^{s,p}(M)}'\)) and \(J(W^{s,p}(M))\), it follows that \(W^{s,p}(M)\) is reflexive.
This proves item (2).

To prove the density of \(C^\infty(M)\), we use a localization and mollification argument based on \eqref{eq4.5}.
From \eqref{eq4.5} there exist constants \(0<c\le C<\infty\) such that, for all \(u\in L^p(M)\),
\begin{equation}\label{eq4.6}
c\!\iint_{M\times M}\frac{|u(x)-u(y)|^{p}}{\dist(x,y)^{\,n+sp}}\,d\mu(x)\,d\mu(y)
   \le [u]_{W^{s,p}(M)}^{p}
   \le C\!\iint_{M\times M}\frac{|u(x)-u(y)|^{p}}{\dist(x,y)^{\,n+sp}}\,d\mu(x)\,d\mu(y).
\end{equation}
Hence the intrinsic seminorm is equivalent to the geodesic-distance Gagliardo seminorm.

Choose a finite smooth atlas \(\{(U_i,\psi_i)\}_{i=1}^N\), where \(\psi_i:U_i\to V_i\subset\mathbb{R}^n\), and let \(\{\eta_i\}_{i=1}^N\subset C_c^\infty(U_i)\) be a smooth partition of unity subordinate to \(\{U_i\}\), with bounded overlap and \(\sup_i\|\eta_i\|_{C^1}<\infty\).
Define
\[
u_i=\eta_i u\in L^p(M),
\qquad
w_i=u_i\circ\psi_i^{-1}\in L^p(V_i),
\]
and extend \(w_i\) by zero to all of \(\mathbb{R}^n\).
Since \(\eta_i\in C_c^\infty(U_i)\), each \(w_i\) has compact support contained in \(V_i\).

Using standard change-of-variables estimates on each compact coordinate patch \(U_i\), we obtain
\begin{equation}\label{eq4.7}
\sum_{i=1}^N \Bigl(\|w_i\|_{L^p(\mathbb{R}^n)}^{p}
        +[w_i]_{W^{s,p}(\mathbb{R}^n)}^{p}\Bigr)
\ \le\
C_0\Bigl(\|u\|_{L^p(M)}^{p}
       +\iint_{M\times M}\frac{|u(x)-u(y)|^{p}}{\dist(x,y)^{\,n+sp}}\,d\mu(x)\,d\mu(y)\Bigr),
\end{equation}
for some constant \(C_0=C_0(M,g,s,p)\).
Here we used the inequality
\[
|\eta_i(x)u(x)-\eta_i(y)u(y)|^{p}
    \le 2^{p-1}\Bigl(|u(x)-u(y)|^{p}
    +|\eta_i(x)-\eta_i(y)|^{p}|u(y)|^{p}\Bigr),
\]
together with the Lipschitz bound \(|\eta_i(x)-\eta_i(y)|\le L\,\dist(x,y)\), which ensures the integrability of the second term against \(\dist(x,y)^{-n-sp}\) since \(p(1-s)>0\).

Let \(\rho_\varepsilon\) be a standard Friedrichs mollifier on \(\mathbb{R}^n\), and define
\[
w_{i,\varepsilon}=\rho_\varepsilon * w_i\in C_c^\infty(\mathbb{R}^n).
\]
For \(\varepsilon>0\) sufficiently small, one has \(\mathrm{supp}(w_{i,\varepsilon})\subset V_i\).
Define a function \(\widetilde u_{i,\varepsilon}\) on \(M\) by
\[
\widetilde u_{i,\varepsilon}(x)=
\begin{cases}
(w_{i,\varepsilon}\circ\psi_i)(x), & x\in U_i,\\
0, & x\in M\setminus U_i.
\end{cases}
\]
Since \(\mathrm{supp}(w_{i,\varepsilon})\subset V_i\) is compact, \(\widetilde u_{i,\varepsilon}\) vanishes in a neighborhood of \(\partial U_i\), hence \(\widetilde u_{i,\varepsilon}\in C^\infty(M)\).
Set
\[
u_\varepsilon=\sum_{i=1}^N \widetilde u_{i,\varepsilon}\in C^\infty(M).
\]

It is classical that
\[
\|w_{i,\varepsilon}-w_i\|_{L^p(\mathbb{R}^n)}\to 0,
\qquad
[w_{i,\varepsilon}-w_i]_{W^{s,p}(\mathbb{R}^n)}\to 0
\quad\text{as }\varepsilon\to 0.
\]
Pulling back via \(\psi_i\) and summing with bounded overlap gives
\[
\|u_\varepsilon-u\|_{L^p(M)}
    \le \sum_{i=1}^N \|\widetilde u_{i,\varepsilon}-u_i\|_{L^p(M)}
    \le C\sum_{i=1}^N \|w_{i,\varepsilon}-w_i\|_{L^p(\mathbb{R}^n)}
      \ \to\ 0,
\]
and
\[
\iint_{M\times M}
    \frac{|(u_\varepsilon-u)(x)-(u_\varepsilon-u)(y)|^{p}}
         {\dist(x,y)^{\,n+sp}}\,d\mu(x)\,d\mu(y)
    \le C_1\sum_{i=1}^N [w_{i,\varepsilon}-w_i]_{W^{s,p}(\mathbb{R}^n)}^{p}
      \ \to\ 0,
\]
for some constant \(C_1=C_1(M,g,s,p)\).

By \eqref{eq4.6}, this implies
\[
\|u_\varepsilon-u\|_{L^p(M)}\to 0,
\qquad
[u_\varepsilon-u]_{W^{s,p}(M)}\to 0
\quad\text{as }\varepsilon\to 0.
\]
Thus \(C^\infty(M)\) is dense in \(W^{s,p}(M)\), proving item (3) and completing the proof.
\end{proof}

\begin{proposition}\label{pro4.7}
	If \(u_k\rightharpoonup u\) weakly in \(L^{p}(M)\) and \(\sup_k [u_k]_{W^{s,p}(M)}<\infty\), then
	\[
	[u]_{W^{s,p}(M)}\ \le\ \liminf_{k\rightarrow\infty} [u_k]_{W^{s,p}(M)}.
	\]
\end{proposition}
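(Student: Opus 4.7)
The plan is to deduce the weak-$L^p$ lower semicontinuity of $[\,\cdot\,]_{W^{s,p}(M)}$ from Fatou's lemma, using Mazur's lemma to bridge the gap between the weak convergence in the hypothesis and the pointwise almost-everywhere convergence that Fatou requires. First, I would dispose of the trivial case in which $\liminf_{k\to\infty}[u_k]_{W^{s,p}(M)}=+\infty$, and then, along a subsequence that I do not relabel, assume
\[
[u_k]_{W^{s,p}(M)}\to L:=\liminf_{k\to\infty}[u_k]_{W^{s,p}(M)}<\infty.
\]

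Since $u_k\rightharpoonup u$ weakly in $L^p(M)$, Mazur's lemma yields convex combinations
\[
v_n=\sum_{k=n}^{N_n}\alpha_{n,k}u_k,\qquad \alpha_{n,k}\ge 0,\qquad \sum_{k=n}^{N_n}\alpha_{n,k}=1,
\]
with $v_n\to u$ strongly in $L^p(M)$. The key point is that $[\,\cdot\,]_{W^{s,p}(M)}$ is a seminorm: in the notation introduced in the proof of Proposition~\ref{pro4.6}, it is the $L^p(M_\Delta,\nu)$-norm of the linear map $Tu(x,y)=u(x)-u(y)$. Applying the triangle inequality to $Tv_n=\sum_k\alpha_{n,k}Tu_k$,
\[
[v_n]_{W^{s,p}(M)}\le\sum_{k=n}^{N_n}\alpha_{n,k}\,[u_k]_{W^{s,p}(M)}\le\sup_{k\ge n}[u_k]_{W^{s,p}(M)}\xrightarrow[n\to\infty]{}L.
\]

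I would then extract a further subsequence, still denoted $v_n$, such that $v_n\to u$ pointwise $\mu$-a.e.\ on $M$. By the product structure, $v_n(x)-v_n(y)\to u(x)-u(y)$ for $(\mu\otimes\mu)$-a.e.\ $(x,y)\in M\times M$, hence also $\nu$-a.e.\ on $M_\Delta$. Applying Fatou's lemma to the nonnegative integrands $|v_n(x)-v_n(y)|^p\,\Kps(x,y)$ yields
\[
[u]_{W^{s,p}(M)}^{p}\le\liminf_{n\to\infty}[v_n]_{W^{s,p}(M)}^{p}\le L^p,
\]
which is the desired inequality.

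The main (mild) obstacle is the mismatch between weak convergence of $u_k$ in $L^p(M)$ and the pointwise control needed to invoke Fatou: weak $L^p$-convergence gives neither a.e.\ convergence of $u_k$ nor of the difference $u_k(x)-u_k(y)$ against the kernel $\Kps$. Mazur's lemma is the right device to resolve this, since it converts weak convergence into strong convergence for convex combinations, along which a.e.\ convergence is available after passing to a subsequence. Once Mazur is in place, the seminorm property of $[\,\cdot\,]_{W^{s,p}(M)}$ propagates the uniform bound from $[u_k]$ to $[v_n]$, and Fatou closes the loop; no structure beyond the nonnegativity of $\Kps$ and the isometric representation of $[\,\cdot\,]_{W^{s,p}(M)}$ from Proposition~\ref{pro4.6} is needed, so the argument works uniformly for every $p\in[1,\infty)$.
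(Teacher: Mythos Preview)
Your argument is correct. The passage to a subsequence realizing the $\liminf$, the indexed form of Mazur's lemma (convex combinations of $u_k$ with $k\ge n$, obtained by applying the standard Mazur lemma to each tail), the seminorm bound $[v_n]\le\sup_{k\ge n}[u_k]\to L$, the extraction of an a.e.\ convergent subsequence, and the final application of Fatou are all valid, and the product-measure argument for a.e.\ convergence of $v_n(x)-v_n(y)$ is clean.

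This is, however, a genuinely different route from the paper's. The paper never passes to convex combinations or invokes Fatou; instead it truncates the kernel away from the diagonal, observes that for each fixed $\delta>0$ the map $T_\delta:L^p(M)\to L^p(M\times M,\nu_\delta)$, $u\mapsto u(x)-u(y)$, is bounded (because the truncated kernel is bounded), hence weak-to-weak continuous, and then applies weak lower semicontinuity of the $L^p(\nu_\delta)$-norm followed by monotone convergence as $\delta\to0$. Your approach is arguably more streamlined: it needs no truncation and no analysis of the kernel beyond nonnegativity, and it treats $p=1$ and $p>1$ uniformly. The paper's approach, on the other hand, stays entirely within weak convergence and makes explicit the continuity of the (regularized) difference operator, which may be of independent use elsewhere in the paper. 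Both arguments work for every $p\in[1,\infty)$.
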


\begin{proof}
Fix \(s\in(0,1)\) and \(1\le p<\infty\). Set
\[
M_\Delta=(M\times M)\setminus\{(x,x):x\in M\},
\]
and define
\[
d\nu(x,y)=\Kps(x,y)\,d\mu(x)\,d\mu(y),\qquad
(Tu)(x,y)=u(x)-u(y)\qquad ((x,y)\in M_\Delta),
\]
so that \([u]_{W^{s,p}(M)}=\|Tu\|_{L^p(M_\Delta,\nu)}\).
Let \((u_k)\subset L^{p}(M)\) satisfy \(u_k\rightharpoonup u\) weakly in \(L^p(M)\) and
\(\sup_k\|Tu_k\|_{L^p(\nu)}<\infty\).
For \(\delta>0\) introduce the truncated kernel
\[
{\Kps}^{\delta}(x,y)=\Kps(x,y)\,\mathbf{1}_{\{\dist(x,y)\ge\delta\}},\qquad
d\nu_\delta={\Kps}^{\delta}(x,y)\,d\mu(x)\,d\mu(y).
\]

\textbf{Step 1. A uniform bound for the truncated operators.}
We claim that for every fixed \(\delta>0\) there exists \(C_\delta<\infty\) such that
\begin{equation}\label{eq4.8}
\|Tu\|_{L^p(\nu_\delta)}^p
=\iint_{M\times M} |u(x)-u(y)|^p\,{\Kps}^\delta(x,y)\,d\mu(x)\,d\mu(y)
\le C_\delta\,\|u\|_{L^p(M)}^p .
\end{equation}
Indeed, using \((a+b)^p\le 2^{p-1}(a^p+b^p)\) and Fubini theorem,
\[
\begin{aligned}
\|Tu\|_{L^p(\nu_\delta)}^p
&\le 2^{p-1}
\int_M |u(x)|^p\!\left(\int_M {\Kps}^\delta(x,y)\,d\mu(y)\right)\!d\mu(x) \\
&\quad+2^{p-1}
\int_M |u(y)|^p\!\left(\int_M {\Kps}^\delta(x,y)\,d\mu(x)\right)\!d\mu(y).
\end{aligned}
\]
By the upper bound in \eqref{eq4.4}, for \(\dist(x,y)\ge\delta\),
\(\Kps(x,y)\le C\,\delta^{-(n+sp)}\). Hence
\[
\sup_{x\in M}\int_M {\Kps}^\delta(x,y)\,d\mu(y)
\le C\,\delta^{-(n+sp)}\Vol(M)<\infty,
\]
and the same bound holds with \(x\) and \(y\) interchanged. This gives \eqref{eq4.8} for some \(C_\delta<\infty\).

Thus the linear map
\[
T_\delta:L^p(M)\to L^p(M\times M,\nu_\delta),\qquad T_\delta u=Tu,
\]
is bounded. Since \(u_k\rightharpoonup u\) in \(L^p(M)\), boundedness and linearity imply
\[
T_\delta u_k \ \rightharpoonup\ T_\delta u
\qquad\text{weakly in }L^p(M\times M,\nu_\delta).
\]
By weak lower semicontinuity of the \(L^p\)-norm,
\[
\|T_\delta u\|_{L^p(\nu_\delta)}
\le \liminf_{k\to\infty}\|T_\delta u_k\|_{L^p(\nu_\delta)}.
\]
Since \({\Kps}^{\delta}\le \Kps\),
\[
\|T_\delta u_k\|_{L^p(\nu_\delta)}\le \|Tu_k\|_{L^p(\nu)}\quad\forall\,k,
\]
hence
\begin{equation}\label{eq4.9}
\|T_\delta u\|_{L^p(\nu_\delta)}^p
\le \liminf_{k\rightarrow\infty} \|Tu_k\|_{L^p(\nu)}^p.
\end{equation}

\textbf{Step 2. Passing to the full kernel as \(\delta\to0\).}
Because \({\Kps}^{\delta}(x,y)\to \Kps(x,y)\) pointwise for each \((x,y)\in M_\Delta\) and
\({\Kps}^{\delta}\to \Kps\) as \(\delta\to 0\), monotone convergence yields
\[
\begin{aligned}
\|Tu\|_{L^p(\nu)}^p
&=\iint_{M\times M} |u(x)-u(y)|^p\,\Kps(x,y)\,d\mu(x)\,d\mu(y)\\
&=\sup_{\delta>0}\iint_{M\times M} |u(x)-u(y)|^p\,{\Kps}^\delta(x,y)\,d\mu(x)\,d\mu(y)\\
&=\sup_{\delta>0}\|T_\delta u\|_{L^p(\nu_\delta)}^p .    
\end{aligned}
\]
Taking the supremum over \(\delta\) in \eqref{eq4.9} and using
\[
\sup_{\delta>0}\liminf_{k\to\infty} a_{k,\delta}
\le
\liminf_{k\to\infty}\sup_{\delta>0} a_{k,\delta},
\qquad
a_{k,\delta}=\|T_\delta u_k\|_{L^p(\nu_\delta)}^p,
\]
we obtain
\[
\|Tu\|_{L^p(\nu)}^p
\le\liminf_{k\to\infty}\sup_{\delta>0}\|T_\delta u_k\|_{L^p(\nu_\delta)}^p
=\liminf_{k\to\infty}\|Tu_k\|_{L^p(\nu)}^p.
\]
Equivalently,
\[
[u]_{W^{s,p}(M)}
\le \liminf_{k\to\infty} [u_k]_{W^{s,p}(M)}.
\]
In particular, if \(\sup_k [u_k]_{W^{s,p}(M)}<\infty\), then the right-hand side is finite, hence
\([u]_{W^{s,p}(M)}<\infty\) and \(u\in W^{s,p}(M)\).
\end{proof}

\begin{proposition}\label{pro4.8}
	For \(1\le p<\infty\) and \(s\in(0,1)\) there exists \(C=C(M,g,s,p)>0\) such that
	\[
	\|u-u_M\|_{L^{p}(M)}\le C\,[u]_{W^{s,p}(M)}\qquad\forall\,u\in W^{s,p}(M),
	\]
	where
	\[
	u_M=\frac{1}{\Vol(M)}\int_M u\,d\mu.
	\]
\end{proposition}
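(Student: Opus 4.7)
The plan is to argue by contradiction via a standard compactness--lower semicontinuity scheme. Suppose the inequality fails. Then there is a sequence $(u_k)\subset W^{s,p}(M)$ with $(u_k)_M=0$ and
\[
\|u_k\|_{L^p(M)}=1,\qquad [u_k]_{W^{s,p}(M)}\rightarrow 0.
\]
In particular $(u_k)$ is bounded in $W^{s,p}(M)$.

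The next step, and the main technical input, is a compact embedding
\[
W^{s,p}(M)\hookrightarrow\hookrightarrow L^{p}(M).
\]
I would obtain this from Remark~\ref{rem4.3}, which shows that $[\,\cdot\,]_{W^{s,p}(M)}$ is equivalent to the geodesic Gagliardo seminorm, combined with a finite smooth atlas $\{(U_i,\psi_i)\}$ and a subordinate partition of unity $\{\eta_i\}$ as in the proof of Proposition~\ref{pro4.6}. The functions $(\eta_i u_k)\circ\psi_i^{-1}$, extended by zero, are then bounded in $W^{s,p}(\mathbb{R}^n)$ with uniformly compact support, so the Euclidean fractional Rellich--Kondrachov theorem gives a subsequence converging in $L^p(\mathbb{R}^n)$; patching and using the boundedness of the coordinate changes yields a further subsequence, still denoted $(u_k)$, such that $u_k\rightarrow u$ strongly in $L^p(M)$. (Alternatively, one may invoke Proposition~\ref{pro4.5} and use the compactness of $B^s_{p,p}(M)\hookrightarrow L^p(M)$.) Since $\|\cdot\|_{L^1(M)}\le C\|\cdot\|_{L^p(M)}$ on a compact $M$, one also gets $u_M=\lim_k(u_k)_M=0$, and $\|u\|_{L^p(M)}=1$.

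Then I would apply the weak (indeed strong, hence weak in $L^p$) lower semicontinuity from Proposition~\ref{pro4.7}:
\[
[u]_{W^{s,p}(M)}\le \liminf_{k\rightarrow\infty}[u_k]_{W^{s,p}(M)}=0,
\]
so $[u]_{W^{s,p}(M)}=0$. Using the lower bound $\Kps(x,y)\ge c_{M,s,p}\,\dist(x,y)^{-(n+sp)}>0$ in Remark~\ref{rem4.3}, the vanishing of the seminorm forces $u(x)=u(y)$ for $(\mu\otimes\mu)$-a.e. $(x,y)\in M\times M$, so $u$ is constant a.e.\ on the connected manifold $M$. Combined with $u_M=0$, this gives $u\equiv 0$, contradicting $\|u\|_{L^p(M)}=1$.

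The only delicate step is the compact embedding into $L^p(M)$; everything else is formal once Propositions~\ref{pro4.7} and Remark~\ref{rem4.3} are in hand. Either the partition-of-unity localization together with the Euclidean fractional Rellich theorem, or the Besov identification in Proposition~\ref{pro4.5}, supplies it, and both reductions are straightforward in the closed-manifold setting since we only need to control finitely many charts with bounded overlap.
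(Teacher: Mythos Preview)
Your contradiction--compactness argument is correct, but it is a genuinely different route from the paper's. The paper gives a short direct proof: since $M$ is compact with diameter $D$, the kernel bound \eqref{eq4.4} yields a \emph{uniform} lower bound $\Kps(x,y)\ge k_0:=c_{M,s,p}D^{-(n+sp)}$ for all $x\ne y$. Writing $v=u-u_M$, one has $v(x)=\Vol(M)^{-1}\int_M(v(x)-v(y))\,d\mu(y)$, so Jensen's inequality and integration in $x$ give
\[
\|v\|_{L^p(M)}^p\le \frac{1}{\Vol(M)}\iint_{M\times M}|v(x)-v(y)|^p\,d\mu(x)\,d\mu(y)
\le \frac{1}{k_0\Vol(M)}\,[u]_{W^{s,p}(M)}^p,
\]
which yields the explicit constant $C=\bigl(D^{n+sp}/(\Vol(M)\,c_{M,s,p})\bigr)^{1/p}$.

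Your approach trades this explicit computation for the structural machinery of compact embedding plus lower semicontinuity. It works, but buys less: you obtain no explicit constant, and you must import (or re-derive) the fractional Rellich theorem via charts, which in the paper's organization is essentially the content of Proposition~\ref{pro4.9} and therefore logically \emph{downstream} of the present statement. There is no circularity---the proof of Proposition~\ref{pro4.9} does not invoke Proposition~\ref{pro4.8}---but your argument effectively front-loads that work. Two minor remarks: the alternative via Proposition~\ref{pro4.5} covers only $n>sp$, so your primary localization argument is the one that handles all $(s,p)$; and connectedness of $M$ is unnecessary, since the strict positivity of $\Kps$ on all of $M\times M\setminus\{\text{diagonal}\}$ already forces a function with vanishing seminorm to be a.e.\ constant.
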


\begin{proof}
Fix \(s\in(0,1)\) and \(1\le p<\infty\). Let \(D=\mathrm{diam}_g(M)<\infty\), and recall the lower bound
\begin{equation}\label{eq4.10}
	\Kps(x,y)\ge \frac{c_{M,s,p}}{\dist(x,y)^{\,n+sp}}
	\qquad (x\neq y),
\end{equation}
from \eqref{eq4.4}.  
Since \(\dist(x,y)\le D\) for all \(x,y\in M\), we obtain
\begin{equation}\label{eq4.11}
	\Kps(x,y)\ge k_0=\frac{c_{M,s,p}}{D^{\,n+sp}}
	\qquad\text{for all }x\ne y.
\end{equation}

Let \(u\in W^{s,p}(M)\) and write \(v=u-u_M\). Then \(v\in W^{s,p}(M)\), \(\int_M v\,d\mu=0\), and
\([v]_{W^{s,p}(M)}=[u]_{W^{s,p}(M)}\).  
For each fixed \(x\in M\),
\[
v(x)=\frac{1}{\Vol(M)}\int_M \bigl(v(x)-v(y)\bigr)\,d\mu(y),
\]
and Jensen’s inequality applied to the probability measure \(\Vol(M)^{-1}d\mu(y)\) yields
\[
|v(x)|^{p}\le \frac{1}{\Vol(M)}\int_M |v(x)-v(y)|^{p}\,d\mu(y).
\]
Integrating with respect to \(x\) and using Fubini theorem,
\begin{equation}\label{eq4.12}
	\|v\|_{L^{p}(M)}^{p}
	=\int_M |v(x)|^{p}\,d\mu(x)
	\le \frac{1}{\Vol(M)}
	\iint_{M\times M} |v(x)-v(y)|^{p}\,d\mu(x)\,d\mu(y).
\end{equation}

Set
\[
M_\Delta=\{(x,y)\in M\times M:\ x\ne y\}.
\]
Since \(|v(x)-v(x)|^{p}=0\), we have
\[
\iint_{M\times M} |v(x)-v(y)|^{p}\,d\mu(x)\,d\mu(y)
=\iint_{M_\Delta} |v(x)-v(y)|^{p}\,d\mu(x)\,d\mu(y).
\]
Using the uniform lower bound \eqref{eq4.11} on \(M_\Delta\),
\[
\iint_{M_\Delta} |v(x)-v(y)|^{p}\,d\mu(x)\,d\mu(y)
\le \frac{1}{k_0}
\iint_{M_\Delta} |v(x)-v(y)|^{p}\,\Kps(x,y)\,d\mu(x)\,d\mu(y)
= \frac{1}{k_0}\,[v]_{W^{s,p}(M)}^{\,p}.
\]
Combining this with \eqref{eq4.12} gives
\[
\|u-u_M\|_{L^{p}(M)}^{p}
=\|v\|_{L^{p}(M)}^{p}
\le \frac{1}{\Vol(M)\,k_0}\,[v]_{W^{s,p}(M)}^{\,p}
= \frac{D^{\,n+sp}}{\Vol(M)\,c_{M,s,p}}\,[u]_{W^{s,p}(M)}^{\,p}.
\]
Taking \(p\)-th roots yields
\[
\|u-u_M\|_{L^{p}(M)}
\le C\,[u]_{W^{s,p}(M)},\qquad
C = \Bigl(\frac{D^{\,n+sp}}{\Vol(M)\,c_{M,s,p}}\Bigr)^{1/p},
\]
where \(C\) depends only on \((M,g,s,p)\).  This completes the proof.
\end{proof}

\begin{proposition}\label{pro4.9}
	Let \(s\in(0,1)\) and \(p\in[1,\infty)\).
	\begin{enumerate}\itemsep0.25em
		\item If \(sp<n\), then \(W^{s,p}(M)\hookrightarrow L^{q}(M)\) continuously for every \(p\le q\le p_s^{*}\), and compactly for \(p\le q<p_s^{*}\).
		\item If \(sp=n\), then \(W^{s,p}(M)\hookrightarrow L^{q}(M)\) continuously for all \(q\in[p,\infty)\).
		\item If \(sp>n\), then \(W^{s,p}(M)\hookrightarrow C^{0,\alpha}(M)\) with \(\alpha=s-\frac{n}{p}\in(0,1)\).
	\end{enumerate}
\end{proposition}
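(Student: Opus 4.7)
My plan is to reduce everything to the classical Euclidean fractional Sobolev embeddings via the localization machinery already prepared in this section. The starting point is the kernel equivalence from Remark~\ref{rem4.3}, which shows that $[u]_{W^{s,p}(M)}$ is comparable to the geodesic Gagliardo seminorm. This, combined with the chart-and-partition-of-unity construction used in the density proof of Proposition~\ref{pro4.6}, yields functions $w_i=(\eta_i u)\circ\psi_i^{-1}\in W^{s,p}(\mathbb{R}^n)$ with compact support in $V_i\subset\mathbb{R}^n$ and the bound \eqref{eq4.7}, namely
\[
\sum_{i=1}^N\Bigl(\|w_i\|_{L^{p}(\mathbb{R}^n)}^{p}+[w_i]_{W^{s,p}(\mathbb{R}^n)}^{p}\Bigr)\le C\,\|u\|_{W^{s,p}(M)}^{p}.
\]
Conversely, any bound on $w_i$ in a target Euclidean space transfers back to a bound on $\eta_i u$ on $M$, and summing recovers $u=\sum_i \eta_i u$.

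For part (1) with $q=p_s^{*}$, I would apply the Euclidean fractional Sobolev embedding $W^{s,p}(\mathbb{R}^n)\hookrightarrow L^{p_s^{*}}(\mathbb{R}^n)$ of \cite{DiNezza-Palatucci-Valdinoci2012} to each $w_i$, then sum and pull back to obtain $\|u\|_{L^{p_s^{*}}(M)}\le C\|u\|_{W^{s,p}(M)}$. The intermediate range $p\le q<p_s^{*}$ follows by H\"older interpolation between $L^p(M)$ and $L^{p_s^{*}}(M)$, using that $\Vol(M)<\infty$. For the compactness statement, fix $q\in[p,p_s^{*})$ and take a bounded sequence $(u_k)$ in $W^{s,p}(M)$. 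After localization we obtain sequences $(w_{i,k})_k$ that are uniformly bounded in $W^{s,p}(\mathbb{R}^n)$ with supports contained in a fixed compact set of $V_i$; the Euclidean fractional Rellich--Kondrachov theorem produces, via a finite diagonal extraction over $i=1,\ldots,N$, a subsequence converging in $L^q$ on each chart. Reassembling through the partition of unity yields convergence in $L^q(M)$.

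For part (2), the same localization is used with the Euclidean endpoint embedding $W^{s,p}(\mathbb{R}^n)\hookrightarrow L^q(\mathbb{R}^n)$ for compactly supported functions and any $q\in[p,\infty)$ when $sp=n$. For part (3), I would invoke the Euclidean fractional Morrey embedding $W^{s,p}(\mathbb{R}^n)\hookrightarrow C^{0,\alpha}(\mathbb{R}^n)$ with $\alpha=s-n/p$ on each $w_i$, which transfers to a H\"older estimate on the compact support of $\eta_i u$ in $U_i$. Hölder continuity on $M$ then follows by combining the local H\"older estimates across the finite atlas: points $x,y$ lying in a common chart are controlled by a single $w_i$ after using the Lipschitz character of $\psi_i$, while points at geodesic distance bounded below by some $\delta>0$ are handled using the uniform $L^\infty$ bound coming from the embedding already proved.

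The main obstacle, and the step that must be done carefully rather than routinely, is the localization estimate \eqref{eq4.7} itself when applied to seminorms in the critical range, since the cross term $|\eta_i(x)-\eta_i(y)|^{p}|u(y)|^{p}$ must be controlled against the singular kernel $\dist(x,y)^{-n-sp}$. This uses the Lipschitz bound on $\eta_i$, which replaces one factor of $\dist(x,y)$ by $|\eta_i(x)-\eta_i(y)|\le L\dist(x,y)$, leaving the integrable singularity $\dist(x,y)^{-n-sp+p}=\dist(x,y)^{-n-(s-1)p}$, which is integrable on bounded sets because $p(1-s)>0$. This observation was already invoked in Proposition~\ref{pro4.6} and is the technical heart that makes the transfer between the manifold and Euclidean pictures work uniformly; once it is granted, each of (1), (2), (3) reduces to a well-known Euclidean statement.
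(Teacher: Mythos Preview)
Your proposal is correct and follows essentially the same route as the paper: kernel equivalence to the geodesic Gagliardo seminorm, localization by a finite atlas and partition of unity as in Proposition~\ref{pro4.6} and \eqref{eq4.7}, transfer to the Euclidean fractional Sobolev/Morrey embeddings, and a diagonal extraction for compactness. The only cosmetic differences are that the paper applies the Euclidean embedding directly for all $p\le q\le p_s^{*}$ rather than interpolating, and it phrases the Euclidean input on bounded Lipschitz domains rather than on $\mathbb{R}^n$ with compact support.
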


\begin{proof}
Fix \(s\in(0,1)\) and \(p\in[1,\infty)\). Recall that
\[
[u]_{W^{s,p}(M)}^{p}
=\iint_{M\times M}|u(x)-u(y)|^{p}\,\Kps(x,y)\,d\mu(x)\,d\mu(y),
\]
and that by \eqref{eq4.4} there exist constants \(0<c\le C<\infty\) such that
\begin{equation}\label{eq4.13}
\frac{c}{\dist(x,y)^{\,n+sp}}
\le
\Kps(x,y)
\le
\frac{C}{\dist(x,y)^{\,n+sp}}
\qquad (x\neq y).
\end{equation}
Define the geodesic-distance Gagliardo seminorm
\[
[u]_{\mathrm{geo};s,p}^{p}
=\iint_{M\times M}\frac{|u(x)-u(y)|^{p}}{\dist(x,y)^{\,n+sp}}\,d\mu(x)\,d\mu(y).
\]
Then \eqref{eq4.13} yields the equivalences
\begin{equation}\label{eq4.14}
[u]_{W^{s,p}(M)}\simeq [u]_{\mathrm{geo};s,p},
\qquad
\|u\|_{W^{s,p}(M)}\simeq \|u\|_{L^{p}(M)}+[u]_{\mathrm{geo};s,p},
\end{equation}
with constants depending only on \((M,g,s,p)\).

Choose a finite smooth atlas \(\{(U_i,\psi_i)\}_{i=1}^{N}\), where each
\(\psi_i:U_i\to V_i\subset\mathbb{R}^{n}\) is bi-Lipschitz onto its image, and let
\(\{\eta_i\}_{i=1}^{N}\) be a smooth partition of unity subordinate to \(\{U_i\}\) with bounded overlap and uniformly bounded \(C^{1}\)-norm. Set
\[
u_i=\eta_i u,
\qquad
w_i=(u_i\circ\psi_i^{-1})\ \text{extended by }0\text{ to }\mathbb{R}^{n}.
\]
Standard coordinate estimates together with the fractional Leibniz rule (as in the proof of Proposition~\ref{pro4.6}) imply
\begin{equation}\label{eq4.15}
\sum_{i=1}^{N}\Bigl(\|w_i\|_{L^{p}(\mathbb{R}^{n})}^{p}
+[w_i]_{W^{s,p}(\mathbb{R}^{n})}^{p}\Bigr)
\le C_{0}\Bigl(\|u\|_{L^{p}(M)}^{p}+[u]_{\mathrm{geo};s,p}^{p}\Bigr),
\end{equation}
and for all \(q\ge p\),
\begin{equation}\label{eq4.16}
\|u\|_{L^{q}(M)}^{q}
\le C_{1}\sum_{i=1}^{N}\|w_i\|_{L^{q}(\mathbb{R}^{n})}^{q}.
\end{equation}
All constants depend only on \((M,g,s,p)\) (and on \(q\) when stated).

We use the classical Euclidean fractional embeddings on bounded Lipschitz domains
\(\Omega\subset\mathbb{R}^{n}\):
\begin{itemize}
	\item If \(sp<n\), then for all \(p\le q\le p_s^{*}=\frac{np}{n-sp}\),
	\begin{equation}\label{eq4.17}
		\|f\|_{L^{q}(\Omega)}
		\le C\bigl(\|f\|_{L^{p}(\Omega)}+[f]_{W^{s,p}(\Omega)}\bigr),
	\end{equation}
	and the embedding is compact for \(q<p_s^{*}\).
	\item If \(sp=n\), then for every \(q\in[p,\infty)\),
	\begin{equation}\label{eq4.18}
		\|f\|_{L^{q}(\Omega)}
		\le C(q)\bigl(\|f\|_{L^{p}(\Omega)}+[f]_{W^{s,p}(\Omega)}\bigr).
	\end{equation}
	\item If \(sp>n\), writing \(\alpha=s-\frac{n}{p}\in(0,1)\),
	\begin{equation}\label{eq4.19}
		|f(x)-f(y)|
		\le C\,|x-y|^{\alpha}[f]_{W^{s,p}(\Omega)}
		\qquad (x,y\in\Omega).
	\end{equation}
\end{itemize}

\textbf{Case \(sp<n\).}
Let \(p\le q\le p_s^{*}\). Applying \eqref{eq4.17} to each \(w_i\) and using \eqref{eq4.16},
\[
\|u\|_{L^{q}(M)}^{q}
\le C_{1}\sum_{i=1}^{N}\Bigl(\|w_i\|_{L^{p}}+[w_i]_{W^{s,p}}\Bigr)^{q}.
\]
Since \(q\ge p\) and \(N<\infty\), Hölder’s inequality and \eqref{eq4.15} give
\[
\|u\|_{L^{q}(M)}
\le C_{3}\bigl(\|u\|_{L^{p}(M)}+[u]_{\mathrm{geo};s,p}\bigr)
\simeq C_{4}\,\|u\|_{W^{s,p}(M)}.
\]
Thus \(W^{s,p}(M)\hookrightarrow L^{q}(M)\) for \(p\le q\le p_s^{*}\).

To obtain compactness for \(q<p_s^{*}\), let \((u_k)\) be bounded in \(W^{s,p}(M)\).  
By \eqref{eq4.15}, each \((w_{i,k})\) is bounded in \(W^{s,p}(V_i)\), hence (after passing to subsequences) converges strongly in \(L^{q}(V_i)\). A diagonal argument provides a subsequence such that \(w_{i,k}\to w_i\) for all \(i\).  
Using \eqref{eq4.16},
\[
\|u_k-u\|_{L^{q}(M)}^{q}
\le C_{1}\sum_{i=1}^{N}\|w_{i,k}-w_i\|_{L^{q}}^{q}\to0,
\]
so the embedding is compact.

\textbf{Case \(sp=n\).}
For any fixed \(q\in[p,\infty)\), applying \eqref{eq4.18} to each \(w_i\) and using \eqref{eq4.15}–\eqref{eq4.16} yields
\[
\|u\|_{L^{q}(M)}
\le C(q)\bigl(\|u\|_{L^{p}(M)}+[u]_{\mathrm{geo};s,p}\bigr)
\simeq C'(q)\,\|u\|_{W^{s,p}(M)}.
\]
Thus \(W^{s,p}(M)\hookrightarrow L^{q}(M)\) continuously for all \(q\in[p,\infty)\).

\textbf{Case \(sp>n\).}
Let \(\alpha=s-\frac{n}{p}\in(0,1)\).  
Applying \eqref{eq4.19} to each \(w_i\) on \(V_i\), and using bi-Lipschitz equivalence of \(|\cdot|\) and \(\dist(\cdot,\cdot)\),
\[
|u_i(x)-u_i(y)|
\le C\,\dist(x,y)^{\alpha}[w_i]_{W^{s,p}(V_i)}\qquad (x,y\in U_i),
\]
where \(u_i=\eta_i u\). Summing over \(i\) and noting the bounded overlap,
\[
|u(x)-u(y)|
\le C'\,\dist(x,y)^{\alpha}
\bigl(\|u\|_{L^{p}(M)}+[u]_{\mathrm{geo};s,p}\bigr),
\]
and hence
\[
[u]_{C^{0,\alpha}(M)}
\le C''\,\bigl(\|u\|_{L^{p}(M)}+[u]_{\mathrm{geo};s,p}\bigr)
\simeq C'''\,\|u\|_{W^{s,p}(M)}.
\]
Moreover, since \(M\) is compact,
\[
\|u\|_{L^\infty(M)}
\le |u_M|+\sup_{x\in M}|u(x)-u_M|
\le \Vol(M)^{-1/p}\|u\|_{L^p(M)}+\mathrm{diam}_g(M)^{\alpha}[u]_{C^{0,\alpha}(M)},
\]
so \(\|u\|_{C^{0,\alpha}(M)}\le C\|u\|_{W^{s,p}(M)}\).

This proves all three embedding statements.
\end{proof}

\begin{proposition}\label{pro4.10}
	If \(sp>n\) and \(1<p<\infty\), then \(W^{s,p}(M)\) is a Banach algebra:
	there exists \(C=C(M,g,s,p)>0\) such that for all \(u,v\in W^{s,p}(M)\),
	\[
	\|uv\|_{W^{s,p}(M)}\le C\,\|u\|_{W^{s,p}(M)}\,\|v\|_{W^{s,p}(M)}.
	\]
\end{proposition}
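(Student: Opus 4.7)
The plan is to derive the Banach algebra estimate from two ingredients that are already at our disposal: the Morrey-type embedding $W^{s,p}(M)\hookrightarrow C^{0,\alpha}(M)\hookrightarrow L^{\infty}(M)$ from Proposition~\ref{pro4.9}(3), and the pointwise Leibniz identity
\[
u(x)v(x)-u(y)v(y)=\bigl(u(x)-u(y)\bigr)v(x)+u(y)\bigl(v(x)-v(y)\bigr).
\]
Note that $sp>n$ and $p\in(1,\infty)$ force $\alpha=s-n/p\in(0,1)$, so $W^{s,p}(M)\subset C(M)$ and in particular $\|u\|_{L^{\infty}(M)}\le C_{0}\|u\|_{W^{s,p}(M)}$ for a constant depending only on $(M,g,s,p)$.

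First I would handle the $L^{p}$ part: by the $L^{\infty}$ embedding,
\[
\|uv\|_{L^{p}(M)}\le \|u\|_{L^{\infty}(M)}\|v\|_{L^{p}(M)}\le C_{0}\|u\|_{W^{s,p}(M)}\|v\|_{W^{s,p}(M)}.
\]
Next, for the seminorm, I would apply the Leibniz identity, the elementary inequality $(a+b)^{p}\le 2^{p-1}(a^{p}+b^{p})$, and the pointwise bound $|v(x)|,|u(y)|\le\|u\|_{L^{\infty}}+\|v\|_{L^{\infty}}$ to obtain
\[
|(uv)(x)-(uv)(y)|^{p}\le 2^{p-1}\Bigl(\|v\|_{L^{\infty}}^{p}|u(x)-u(y)|^{p}+\|u\|_{L^{\infty}}^{p}|v(x)-v(y)|^{p}\Bigr).
\]
Multiplying by $\Kps(x,y)$ and integrating over $M\times M$ yields
\[
[uv]_{W^{s,p}(M)}^{p}\le 2^{p-1}\Bigl(\|v\|_{L^{\infty}(M)}^{p}[u]_{W^{s,p}(M)}^{p}+\|u\|_{L^{\infty}(M)}^{p}[v]_{W^{s,p}(M)}^{p}\Bigr),
\]
and applying the Morrey embedding to $\|u\|_{L^{\infty}}$ and $\|v\|_{L^{\infty}}$ bounds this by $C_{1}\bigl(\|u\|_{W^{s,p}}\|v\|_{W^{s,p}}\bigr)^{p}$.

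Combining the two estimates and using $\|uv\|_{W^{s,p}(M)}=\|uv\|_{L^{p}(M)}+[uv]_{W^{s,p}(M)}$ together with $(A^{p}+B^{p})^{1/p}\le A+B$, I obtain
\[
\|uv\|_{W^{s,p}(M)}\le C\,\|u\|_{W^{s,p}(M)}\,\|v\|_{W^{s,p}(M)},
\]
with $C=C(M,g,s,p)$. There is no genuine obstacle: the only subtlety is to ensure that the Morrey embedding indeed yields $W^{s,p}(M)\hookrightarrow L^{\infty}(M)$ with the quantitative constant (which follows from Proposition~\ref{pro4.9}(3) by adding $|u_{M}|\le \Vol(M)^{-1/p}\|u\|_{L^{p}}$ to the Hölder oscillation bound). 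All other steps are a direct splitting argument on the Gagliardo-type double integral, and no further regularity of the kernel $\Kps$ is required beyond its defining symmetry.
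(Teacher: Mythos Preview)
Your proposal is correct and follows essentially the same route as the paper: both use the Morrey embedding $W^{s,p}(M)\hookrightarrow L^{\infty}(M)$ from Proposition~\ref{pro4.9}(3), the Leibniz identity for $u(x)v(x)-u(y)v(y)$, and the inequality $(a+b)^{p}\le 2^{p-1}(a^{p}+b^{p})$ to bound the seminorm, then combine with the trivial $L^{p}$ estimate. The only cosmetic slip is the line ``$|v(x)|,|u(y)|\le\|u\|_{L^{\infty}}+\|v\|_{L^{\infty}}$'', which is weaker than what you actually use in the next display (namely $|v(x)|\le\|v\|_{L^{\infty}}$ and $|u(y)|\le\|u\|_{L^{\infty}}$); just state those bounds directly.
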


\begin{proof}
	Assume \(sp>n\) and \(1<p<\infty\). Let \(u,v\in W^{s,p}(M)\). We show that \(uv\in W^{s,p}(M)\) and derive the desired algebra estimate.

	By Proposition~\ref{pro4.9}(3), there exists \(\alpha=s-\frac{n}{p}\in(0,1)\) and a constant
	\(C_E=C_E(M,g,s,p)\) such that
	\[
	\|w\|_{C^{0,\alpha}(M)}\le C_E\,\|w\|_{W^{s,p}(M)}
	\qquad\forall\,w\in W^{s,p}(M).
	\]
	Since \(M\) is compact, this implies in particular
	\begin{equation}\label{eq4.20}
		\|w\|_{L^\infty(M)}\le C_E\,\|w\|_{W^{s,p}(M)}
		\qquad\forall\,w\in W^{s,p}(M).
	\end{equation}

	By \eqref{eq4.20} and H\"older's inequality,
	\begin{equation}\label{eq4.21}
		\|uv\|_{L^p(M)}
		\le \|u\|_{L^\infty(M)}\|v\|_{L^p(M)}
		\le C_E\,\|u\|_{W^{s,p}(M)}\,\|v\|_{W^{s,p}(M)}.
	\end{equation}

	Recall that
	\[
	[w]_{W^{s,p}(M)}^{p}
	=\iint_{M\times M}|w(x)-w(y)|^{p}\,\Kps(x,y)\,d\mu(x)\,d\mu(y),
	\]
	where \(\Kps\ge0\) is symmetric. For a.e.\ \((x,y)\in M\times M\),
	\[
	u(x)v(x)-u(y)v(y)
	=\bigl(u(x)-u(y)\bigr)v(x)
	+\bigl(v(x)-v(y)\bigr)u(y).
	\]
	Using \((a+b)^p\le 2^{p-1}(a^p+b^p)\),
	\[
	|u(x)v(x)-u(y)v(y)|^{p}
	\le 2^{p-1}\Bigl(
	|u(x)-u(y)|^{p}|v(x)|^{p}
	+|v(x)-v(y)|^{p}|u(y)|^{p}
	\Bigr).
	\]
	Integrating against \(\Kps(x,y)\,d\mu(x)\,d\mu(y)\) and applying Fubini,
	\begin{align*}
		[uv]_{W^{s,p}(M)}^{p}
		&\le 2^{p-1}
		\int_M |v(x)|^{p}
		\left(\int_M |u(x)-u(y)|^{p}\Kps(x,y)\,d\mu(y)\right)d\mu(x) \\
		&\quad
		+2^{p-1}
		\int_M |u(y)|^{p}
		\left(\int_M |v(x)-v(y)|^{p}\Kps(x,y)\,d\mu(x)\right)d\mu(y) \\
		&\le 2^{p-1}\|v\|_{L^\infty(M)}^{p}[u]_{W^{s,p}(M)}^{p}
		+2^{p-1}\|u\|_{L^\infty(M)}^{p}[v]_{W^{s,p}(M)}^{p}.
	\end{align*}
	Taking \(p\)-th roots and using \((a^{p}+b^{p})^{1/p}\le a+b\),
	\[
	[uv]_{W^{s,p}(M)}
	\le 2^{1-\frac1p}
	\Bigl(\|v\|_{L^\infty(M)}[u]_{W^{s,p}(M)}
	+\|u\|_{L^\infty(M)}[v]_{W^{s,p}(M)}\Bigr).
	\]
	Applying \eqref{eq4.20},
	\begin{equation}\label{eq4.22}
    \begin{aligned}
    [uv]_{W^{s,p}(M)}
		&\le 2^{1-\frac1p}C_E
		\Bigl(\|v\|_{W^{s,p}(M)}\|u\|_{W^{s,p}(M)}
		+\|u\|_{W^{s,p}(M)}\|v\|_{W^{s,p}(M)}\Bigr)\\
		&=2^{2-\frac1p}C_E\,\|u\|_{W^{s,p}(M)}\|v\|_{W^{s,p}(M)}.    
    \end{aligned}
	\end{equation}

	Using \(\|w\|_{W^{s,p}(M)}=\|w\|_{L^p(M)}+[w]_{W^{s,p}(M)}\) together with
	\eqref{eq4.21} and \eqref{eq4.22},
	\[
	\|uv\|_{W^{s,p}(M)}
	\le \|uv\|_{L^{p}(M)}+[uv]_{W^{s,p}(M)}
	\le \left(C_E+2^{2-\frac1p}C_E\right)
	\|u\|_{W^{s,p}(M)}\,\|v\|_{W^{s,p}(M)}.
	\]
	Thus \(uv\in W^{s,p}(M)\), and the algebra estimate holds with
	\[
	C=C_E\bigl(1+2^{2-\frac1p}\bigr).
	\]
\end{proof}

\subsection{The B--program: optimal \(L^p\)-term in fractional Sobolev inequalities}

In this subsection, for \(u\in W^{s,p}(M)\), we study the fractional Sobolev embedding in the following two equivalent forms:
\begin{equation}\label{eq4.23}
\begin{cases}
(I_{p,\mathrm{gen}}^{\,1}) &
\displaystyle \|u\|_{L^{p_s^*}(M)}
	\le A\,[u]_{W^{s,p}(M)} + B\,\|u\|_{L^{p}(M)},\\[1.2ex]
(I_{p,\mathrm{gen}}^{\,p}) &
\displaystyle \|u\|_{L^{p_s^*}(M)}^{p}
	\le A\,[u]_{W^{s,p}(M)}^{\,p}
	+ B\,\|u\|_{L^{p}(M)}^{\,p},
\end{cases}
\end{equation}
where \(A,B\ge 0\) are constants independent of \(u\).
The two inequalities \((I_{p,\mathrm{gen}}^{\,1})\) and \((I_{p,\mathrm{gen}}^{\,p})\) are equivalent up to a change of constants:
\((I_{p,\mathrm{gen}}^{\,1})\Rightarrow (I_{p,\mathrm{gen}}^{\,p})\) by \((a+b)^p\le 2^{p-1}(a^p+b^p)\),
and \((I_{p,\mathrm{gen}}^{\,p})\Rightarrow (I_{p,\mathrm{gen}}^{\,1})\) by \((a+b)^{1/p}\le a^{1/p}+b^{1/p}\).

Following Hebey \cite{Hebey1999}, we introduce
\begin{align*}
\mathcal{A}_p(M)
	&=\bigl\{A\in\mathbb{R}:\ \exists\,B\in\mathbb{R}\ \text{such that }(I_{p,\mathrm{gen}}^{\,1})\ \text{holds}\bigr\},\\
\mathcal{B}_p(M)
	&=\bigl\{B\in\mathbb{R}:\ \exists\,A\in\mathbb{R}\ \text{such that }(I_{p,\mathrm{gen}}^{\,1})\ \text{holds}\bigr\},
\end{align*}
and, for the \(p\)-power formulation,
\begin{align*}
\overline{\mathcal{A}_p(M)}
	&=\bigl\{A\in\mathbb{R}:\ \exists\,B\in\mathbb{R}\ \text{such that }(I_{p,\mathrm{gen}}^{\,p})\ \text{holds}\bigr\},\\
\overline{\mathcal{B}_p(M)}
	&=\bigl\{B\in\mathbb{R}:\ \exists\,A\in\mathbb{R}\ \text{such that }(I_{p,\mathrm{gen}}^{\,p})\ \text{holds}\bigr\}.
\end{align*}
The corresponding optimal constants are
\[
\alpha_p(M)=\inf\mathcal{A}_p(M),\qquad
\beta_p(M)=\inf\mathcal{B}_p(M),\qquad
\overline{\alpha_p(M)}=\inf\overline{\mathcal{A}_p(M)},\qquad
\overline{\beta_p(M)}=\inf\overline{\mathcal{B}_p(M)}.
\]

\begin{remark}\label{rem4.11}
We say that \(\mathcal{A}_p(M)\) is closed at the infimum if \(\alpha_p(M)\in\mathcal{A}_p(M)\).
Equivalently, there exists \(B\in\mathbb{R}\) such that
\begin{equation}\label{eq4.24}
(I_{p,\mathrm{opt}}^{\,1})\qquad
\|u\|_{L^{p_s^*}(M)}
	\le \alpha_p(M)\,[u]_{W^{s,p}(M)}
	+ B\,\|u\|_{L^{p}(M)}
	\qquad\forall\,u\in W^{s,p}(M).
\end{equation}
Similarly, \(\mathcal{B}_p(M)\) is closed at the infimum if \(\beta_p(M)\in\mathcal{B}_p(M)\); that is, if there exists \(A\in\mathbb{R}\) such that
\begin{equation}\label{eq4.25}
(J_{p,\mathrm{opt}}^{\,1})\qquad
\|u\|_{L^{p_s^*}(M)}
	\le A\,[u]_{W^{s,p}(M)}
	+ \beta_p(M)\,\|u\|_{L^{p}(M)}
	\qquad\forall\,u\in W^{s,p}(M).
\end{equation}
Analogous definitions apply to \(\overline{\mathcal{A}_p(M)}\) and \(\overline{\mathcal{B}_p(M)}\), replacing \((I_{p,\mathrm{gen}}^{\,1})\) with \((I_{p,\mathrm{gen}}^{\,p})\).
For example, closure at the infimum for \(\overline{\mathcal{B}_p(M)}\) means that there exists \(\overline{A}\in\mathbb{R}\) such that
\[
(J_{p,\mathrm{opt}}^{\,p})\qquad
\|u\|_{L^{p_s^*}(M)}^{p}
	\le \overline{A}\,[u]_{W^{s,p}(M)}^{\,p}
	+ \overline{\beta_p(M)}\,\|u\|_{L^{p}(M)}^{\,p}
	\qquad\forall\,u\in W^{s,p}(M).
\]
\end{remark}

In the remainder of this subsection we address the following questions with \(n>sp\):
\begin{enumerate}\itemsep0.2em
	\item Compute \(\beta_p(M)\) and \(\overline{\beta_p(M)}\) explicitly; equivalently, determine the optimal \(L^p\)-mass terms in the linear and \(p\)-power inequalities \eqref{eq4.23}.
	\item Prove that \(\mathcal{B}_p(M)\) and \(\overline{\mathcal{B}_p(M)}\) are closed at the infimum.
	\item Identify the precise range of exponents \(p\) for which the optimal inequality \((J_{p,\mathrm{opt}}^{\,p})\) holds on an arbitrary closed manifold \((M,g)\).
\end{enumerate}

First, we establish the validity of \eqref{eq4.23}.

\begin{lemma}\label{lem4.12}
Let \((M,g)\) be a closed \(n\)-dimensional Riemannian manifold, let \(s\in(0,1)\), and assume \(1\le p<\frac{n}{s}\).
Then there exist constants \(A,B>0\) such that for all \(u\in W^{s,p}(M)\),
\begin{equation}\label{eq4.26}
	\|u\|_{L^{p_s^*}(M)}
	\le
	A\Bigg(\iint_{M\times M}|u(x)-u(y)|^p\,\Kps(x,y)\,d\mu(x)\,d\mu(y)\Bigg)^{1/p}
	+ B\,\|u\|_{L^p(M)},
\end{equation}
where \(p_s^*=\frac{np}{n-sp}\).
\end{lemma}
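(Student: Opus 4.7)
The plan is to deduce Lemma \ref{lem4.12} essentially as a rephrasing of the subcritical embedding of Proposition \ref{pro4.9}, specialized to $q=p_s^*$, but I will still outline the intrinsic argument since the lemma is stated as a standalone result. The starting point is the kernel equivalence \eqref{eq4.4} from Remark \ref{rem4.3}, which tells us that, up to multiplicative constants depending only on $(M,g,s,p)$, the intrinsic energy in \eqref{eq4.26} is comparable to the geodesic Gagliardo seminorm
\[
[u]_{\mathrm{geo};s,p}^p = \iint_{M\times M}\frac{|u(x)-u(y)|^p}{\dist(x,y)^{n+sp}}\,d\mu(x)\,d\mu(y).
\]
Hence it suffices to prove the inequality with $[u]_{\mathrm{geo};s,p}$ in place of the intrinsic seminorm.

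Next I would localize via a finite smooth atlas $\{(U_i,\psi_i)\}_{i=1}^N$ with each $\psi_i:U_i\to V_i\subset\mathbb{R}^n$ bi-Lipschitz onto a bounded Lipschitz domain, together with a subordinate smooth partition of unity $\{\eta_i\}$ having bounded overlap and uniformly bounded $C^1$-norm. Setting $u_i=\eta_i u$ and $w_i=(u_i\circ\psi_i^{-1})$ extended by zero to $\mathbb{R}^n$, the Euclidean fractional Sobolev inequality on bounded Lipschitz domains gives
\[
\|w_i\|_{L^{p_s^*}(\mathbb{R}^n)} \le C\bigl([w_i]_{W^{s,p}(\mathbb{R}^n)} + \|w_i\|_{L^p(\mathbb{R}^n)}\bigr).
\]
The pointwise inequality $|u|^{p_s^*}\le N^{p_s^*/p-1}\sum_i|\eta_i u|^{p_s^*}$ (when $\sum\eta_i\equiv 1$, using convexity) and bi-Lipschitz change of variables on each chart then yields
\[
\|u\|_{L^{p_s^*}(M)} \le C\sum_{i=1}^N \bigl([w_i]_{W^{s,p}(\mathbb{R}^n)} + \|w_i\|_{L^p(\mathbb{R}^n)}\bigr).
\]

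The heart of the argument is controlling $[w_i]_{W^{s,p}(\mathbb{R}^n)}$ by $[u]_{\mathrm{geo};s,p}$ plus an $L^p$ remainder. Writing
\[
|u_i(x)-u_i(y)|^p \le 2^{p-1}\bigl(|\eta_i(x)|^p|u(x)-u(y)|^p + |u(y)|^p|\eta_i(x)-\eta_i(y)|^p\bigr)
\]
and using the Lipschitz bound $|\eta_i(x)-\eta_i(y)|\le L\,\dist(x,y)$, the remainder contribution is
\[
L^p \iint_{M\times M}\frac{|u(y)|^p}{\dist(x,y)^{n+sp-p}}\,d\mu(x)\,d\mu(y) = L^p\int_M |u(y)|^p\Bigl(\int_M \dist(x,y)^{-(n-p(1-s))}\,d\mu(x)\Bigr)d\mu(y).
\]
Since $p(1-s)>0$, the inner integral is uniformly bounded in $y$ on the compact manifold $M$, so this term is $\lesssim \|u\|_{L^p(M)}^p$. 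Combined with the first term, which is bounded by $\|\eta_i\|_\infty^p [u]_{\mathrm{geo};s,p}^p$, and summing over $i$, we conclude
\[
\|u\|_{L^{p_s^*}(M)}\le A\,[u]_{W^{s,p}(M)} + B\,\|u\|_{L^p(M)}.
\]

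The only step I expect to require care is the remainder estimate from the fractional Leibniz rule: one must verify that the kernel $\dist(x,y)^{-(n+sp-p)}$ is locally integrable on the compact manifold, which is exactly the condition $n-p(1-s)<n$, equivalent to the automatic inequality $p(1-s)>0$ in the range $s\in(0,1)$, $p\in[1,\infty)$. No other subtlety arises; the leading constant is nonoptimal here, and the refined $\mathcal{B}$-program developed later addresses the sharp value of $B$.
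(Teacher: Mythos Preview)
Your proposal is correct and follows essentially the same route as the paper's proof: localization via a finite atlas and partition of unity, the Euclidean fractional Sobolev inequality in each chart, and the fractional Leibniz estimate with the Lipschitz bound on the cutoffs to absorb the remainder into the $L^p$ term (using exactly the integrability $p(1-s)>0$). The only cosmetic difference is that the paper invokes a bounded extension operator $E_j:W^{s,p}(\Omega_j)\to W^{s,p}(\mathbb{R}^n)$ rather than extending $w_i$ by zero, but since $\eta_i u$ is compactly supported in $U_i$ your zero-extension is equally valid.
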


\begin{proof}
Choose a finite family of normal coordinate charts \(\{(U_j,\phi_j)\}_{j=1}^N\) with uniformly controlled geometry, and let \(\{\eta_j\}_{j=1}^N\subset C^\infty(M)\) be a partition of unity subordinate to \(\{U_j\}\), with bounded overlap and uniformly bounded derivatives.
For each \(j\), define \(\Omega_j=\phi_j(U_j)\subset\mathbb{R}^n\) and
\[
v_j = (\eta_j u)\circ\phi_j^{-1}\quad\text{on }\Omega_j.
\]

Since \(\Omega_j\) is a bounded Lipschitz domain, there exists a bounded linear extension operator
\[
E_j:W^{s,p}(\Omega_j)\to W^{s,p}(\mathbb{R}^n),
\qquad \tilde v_j=E_j v_j.
\]
By the Euclidean fractional Sobolev inequality (see, e.g., \cite{DiNezza-Palatucci-Valdinoci2012}),
\[
\|\tilde v_j\|_{L^{p_s^*}(\mathbb{R}^n)}
\le
C\big([\tilde v_j]_{W^{s,p}(\mathbb{R}^n)}+\|\tilde v_j\|_{L^p(\mathbb{R}^n)}\big),
\]
with \(C=C(n,s,p)\).
Using the boundedness of \(E_j\) and the Jacobian and distance comparability in normal coordinates (cf. \cite{Rey-Saintier2024}), we obtain on each \(U_j\) that
\begin{equation}\label{eq4.27}
\|\eta_j u\|_{L^{p_s^*}(U_j)}
\le
C\big([\eta_j u]_{W^{s,p}(U_j)}+\|\eta_j u\|_{L^p(U_j)}\big),
\end{equation}
where \(C\) now depends only on \((M,g,s,p)\).

Summing \eqref{eq4.27} over \(j\) and using \(u=\sum_{j=1}^N \eta_j u\),
\[
\|u\|_{L^{p_s^*}(M)}
= \Big\|\sum_{j=1}^N \eta_j u\Big\|_{L^{p_s^*}(M)}
\le \sum_{j=1}^N \|\eta_j u\|_{L^{p_s^*}(U_j)}
\le C\sum_{j=1}^N\big([\eta_j u]_{W^{s,p}(U_j)} + \|\eta_j u\|_{L^p(U_j)}\big).
\]

The fractional Leibniz rule and the uniform bounds on \(\eta_j\) yield
\[
[\eta_j u]_{W^{s,p}(U_j)}
\le
C\big([u]_{W^{s,p}(U_j)} + \|u\|_{L^p(U_j)}\big),
\]
with \(C\) independent of \(j\).
Summing over \(j\) and using that \(\sum_{j=1}^N \|\eta_j u\|_{L^p(U_j)}\le C\|u\|_{L^p(M)}\), we obtain
\[
\sum_{j=1}^N [\eta_j u]_{W^{s,p}(U_j)}
\le
C\big([u]_{W^{s,p}(M)} + \|u\|_{L^p(M)}\big).
\]
Consequently,
\[
\|u\|_{L^{p_s^*}(M)}
\le
C\big([u]_{W^{s,p}(M)} + \|u\|_{L^p(M)}\big).
\]

Recalling Definition~\ref{def4.1}, we have
\[
[u]_{W^{s,p}(M)}^{p}
= \iint_{M\times M}|u(x)-u(y)|^p\,\Kps(x,y)\,d\mu(x)\,d\mu(y),
\]
so \eqref{eq4.26} follows by renaming constants.
\end{proof}

\begin{theorem}\label{thm4.13}
Let \((M,g)\) be a closed Riemannian \(n\)-manifold, let \(s\in(0,1)\), and assume \(1\le p<\frac{n}{s}\).
Then
\[
\beta_p(M)=\Vol(M)^{-s/n}.
\]
In particular, \(\mathcal{B}_p(M)\) is closed at the infimum: there exists \(A>0\) such that for all
\(u\in W^{s,p}(M)\),
\begin{equation}\label{eq4.28}
\|u\|_{L^{p_s^*}(M)}
\le
A\Bigg(\iint_{M\times M}|u(x)-u(y)|^{p}\,\Kps(x,y)\,d\mu(x)\,d\mu(y)\Bigg)^{1/p}
+\beta_p(M)\,\|u\|_{L^{p}(M)},
\end{equation}
where \(p_s^*=\frac{np}{n-sp}\). Thus the optimal inequality \((J_{p,\mathrm{opt}}^{\,1})\) holds.
\end{theorem}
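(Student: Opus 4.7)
The plan splits into the lower bound $\beta_p(M)\ge \Vol(M)^{-s/n}$ and the upper bound with attainment $\beta_p(M)\le \Vol(M)^{-s/n}$; the latter also delivers the closure of $\mathcal{B}_p(M)$ at its infimum. For the lower bound I would substitute the constant test $u\equiv 1$ into $(I_{p,\mathrm{gen}}^{\,1})$ in \eqref{eq4.23}. Since $[1]_{W^{s,p}(M)}=0$, every admissible $B$ must satisfy $\Vol(M)^{1/p_s^*}\le B\,\Vol(M)^{1/p}$, and the arithmetic identity $1/p_s^*-1/p=-s/n$ forces $B\ge \Vol(M)^{-s/n}$.

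For the upper bound with attainment I would argue by contradiction: if no $A>0$ makes \eqref{eq4.28} valid, then taking $A=k$ produces a sequence $(u_k)\subset W^{s,p}(M)$ with
\[
\|u_k\|_{L^{p_s^*}(M)}>k\,[u_k]_{W^{s,p}(M)}+\Vol(M)^{-s/n}\,\|u_k\|_{L^p(M)},
\]
and the normalization $\|u_k\|_{L^{p_s^*}(M)}=1$ gives $[u_k]_{W^{s,p}(M)}<1/k$ and $\|u_k\|_{L^p(M)}<\Vol(M)^{s/n}$. The core of the argument is to establish a competing upper bound
\[
\|u_k\|_{L^{p_s^*}(M)}-\Vol(M)^{-s/n}\,\|u_k\|_{L^p(M)}\le C\,[u_k]_{W^{s,p}(M)}
\]
with $C=C(M,g,s,p)$ independent of $k$; this directly contradicts the strict growth factor $k$.

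To derive the competing bound I would subtract the mean $c_k=(u_k)_M$ and exploit the exact volume identity $\Vol(M)^{-s/n}\Vol(M)^{1/p}=\Vol(M)^{1/p_s^*}$. Applying the triangle inequality upward on $\|u_k\|_{L^{p_s^*}(M)}$ and downward on $\|u_k\|_{L^p(M)}$ cancels the constant-function pieces, leaving
\[
\|u_k\|_{L^{p_s^*}(M)}-\Vol(M)^{-s/n}\|u_k\|_{L^p(M)}
\le \|u_k-c_k\|_{L^{p_s^*}(M)}+\Vol(M)^{-s/n}\|u_k-c_k\|_{L^p(M)}.
\]
I would then bound the $L^p$-term by $[u_k]_{W^{s,p}(M)}$ via the fractional Poincar\'e inequality (Proposition~\ref{pro4.8}), and the $L^{p_s^*}$-term by Lemma~\ref{lem4.12} applied to $u_k-c_k$ followed by another application of Proposition~\ref{pro4.8}. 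The degenerate case $[u_k]_{W^{s,p}(M)}=0$ is a priori excluded, since then $u_k$ is constant and the same volume identity turns the strict inequality into an equality.

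The main obstacle is the clean cancellation in this key estimate: the two triangle inequalities must be oriented so that the constant-function contributions match exactly under the identity $1/p-s/n=1/p_s^*$. Once this is arranged, Lemma~\ref{lem4.12} and Proposition~\ref{pro4.8} together supply the fixed constant $C$, and the contradiction $k<C$ for large $k$ simultaneously yields $\beta_p(M)\le \Vol(M)^{-s/n}$ and the closure of $\mathcal{B}_p(M)$ at its infimum.
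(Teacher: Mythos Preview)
Your proposal is correct and uses essentially the same ingredients as the paper: the lower bound via the constant test function, and the upper bound via the mean decomposition $u=(u-u_M)+u_M$ together with Lemma~\ref{lem4.12} and the Poincar\'e inequality of Proposition~\ref{pro4.8}.

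The only difference is packaging. Your ``competing bound''
\[
\|u\|_{L^{p_s^*}(M)}-\Vol(M)^{-s/n}\|u\|_{L^p(M)}\le C\,[u]_{W^{s,p}(M)}
\]
holds for \emph{every} $u\in W^{s,p}(M)$ (nothing in your derivation uses that $u_k$ belongs to a special sequence), and it is precisely \eqref{eq4.28} rearranged. So the contradiction frame is superfluous: once you have this bound you are done directly, and this is exactly what the paper does. The paper's version is marginally cleaner because it bounds $\|u_M\|_{L^{p_s^*}}\le \Vol(M)^{-s/n}\|u\|_{L^p}$ via H\"older on $|u_M|$, avoiding the reverse triangle inequality on $\|u\|_{L^p}$ and hence the extra $\Vol(M)^{-s/n}\|u-u_M\|_{L^p}$ term in your constant $C$; but both routes are valid.
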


\begin{proof}
Test \((I^{\,1}_{p,\mathrm{gen}})\) with the constant function \(u\equiv 1\).
Since \([1]_{W^{s,p}(M)}=0\), the inequality reduces to
\[
\|1\|_{L^{p_s^*}(M)}\le B\,\|1\|_{L^{p}(M)}
\quad\Longleftrightarrow\quad
\Vol(M)^{1/p_s^*}\le B\,\Vol(M)^{1/p}.
\]
Thus any admissible \(B\) must satisfy
\[
B\ge \Vol(M)^{1/p_s^*-1/p}
=\Vol(M)^{-s/n}.
\]
Hence
\[
\beta_p(M)\ge \Vol(M)^{-s/n}.
\]

Let \(u\in W^{s,p}(M)\), and denote its average by
\[
u_M=\frac{1}{\Vol(M)}\int_M u\,d\mu.
\]
Set \(v=u-u_M\), so \(v_M=0\).
By Proposition~\ref{pro4.8},
\[
\|v\|_{L^p(M)}
\le C_P\,[v]_{W^{s,p}(M)}
= C_P\,[u]_{W^{s,p}(M)}.
\]

By Lemma~\ref{lem4.12}, there exist constants \(A_0,B_0>0\) such that
\begin{equation}\label{eq4.29}
\|w\|_{L^{p_s^*}(M)}
\le
A_0\Bigg(\iint_{M\times M}|w(x)-w(y)|^{p}\,\Kps(x,y)\,d\mu(x)\,d\mu(y)\Bigg)^{1/p}
+B_0\,\|w\|_{L^p(M)}
\end{equation}
for all \(w\in W^{s,p}(M)\).
Applying \eqref{eq4.29} to \(w=v\) and using the Poincar\'e inequality,
\[
\|v\|_{L^{p_s^*}(M)}
\le A_0[v]_{W^{s,p}(M)} + B_0\|v\|_{L^{p}(M)}
\le (A_0 + B_0C_P)\,[u]_{W^{s,p}(M)}.
\]
Let \(A_1=A_0+B_0C_P\); then
\begin{equation}\label{eq4.30}
\|u-u_M\|_{L^{p_s^*}(M)}
\le
A_1\Bigg(\iint_{M\times M}|u(x)-u(y)|^p\,\Kps(x,y)\,d\mu(x)\,d\mu(y)\Bigg)^{1/p}.
\end{equation}

By H\"older's inequality,
\[
|u_M|
= \frac{1}{\Vol(M)}\Big|\int_M u\,d\mu\Big|
\le \Vol(M)^{-1/p}\|u\|_{L^p(M)}.
\]
Hence
\[
\|u_M\|_{L^{p_s^*}(M)}
= \Vol(M)^{1/p_s^*}\,|u_M|
\le \Vol(M)^{1/p_s^*-1/p}\,\|u\|_{L^p(M)}
= \Vol(M)^{-s/n}\,\|u\|_{L^p(M)}.
\]
Using \(u=(u-u_M)+u_M\) together with \eqref{eq4.30} and the previous estimate,
\[
\begin{aligned}
\|u\|_{L^{p_s^*}(M)}
&\le \|u-u_M\|_{L^{p_s^*}(M)} + \|u_M\|_{L^{p_s^*}(M)}\\
&\le A_1\Bigg(\iint_{M\times M}|u(x)-u(y)|^p\,\Kps(x,y)\,d\mu(x)\,d\mu(y)\Bigg)^{1/p}
+\Vol(M)^{-s/n}\,\|u\|_{L^p(M)}.
\end{aligned}
\]
Thus \((I^{\,1}_{p,\mathrm{gen}})\) holds with
\[
A=A_1,\qquad B=\Vol(M)^{-s/n},
\]
which shows
\[
\beta_p(M)\le \Vol(M)^{-s/n}.
\]

Combining the lower and upper bounds,
\[
\Vol(M)^{-s/n}\le \beta_p(M)\le \Vol(M)^{-s/n},
\]
we obtain
\[
\beta_p(M)=\Vol(M)^{-s/n}.
\]
Since \eqref{eq4.28} is valid with this constant, the set \(\mathcal{B}_p(M)\) is closed at the infimum.
\end{proof}

\begin{theorem}\label{thm4.14}
	Let $(M,g)$ be a closed Riemannian $n$-manifold, $s\in(0,1)$, $sp<n$ and $1\le p\le 2$.
	Then there exists $A=A(M,g,s,p)>0$ such that for every $u\in W^{s,p}(M)$,
	\begin{equation}\label{eq4.31}
	\Bigl(\int_M |u|^{p_s^*}\,d\mu\Bigr)^{\!p/p_s^*}
	\ \le\
	A \iint_{M\times M}\!|u(x)-u(y)|^p\,\Kps(x,y)\,d\mu(x)\,d\mu(y)
	\ +\ \Vol(M)^{-\,\frac{sp}{n}}\int_M |u|^p\,d\mu.
\end{equation}
	In particular, the optimal inequality \((J^{\,p}_{p,\mathrm{opt}})\) holds for all $n\ge2$ and all $p\in[1,2]$ with $sp<n$.
\end{theorem}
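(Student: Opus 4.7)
The plan is to strengthen the optimal linear inequality of Theorem~\ref{thm4.13} into its $p$-power version via a mean-value decomposition of $u$ together with a scalar inequality that is sharp only for $p\in[1,2]$. First, the pointwise Kato-type bound $\big\lvert|u(x)|-|u(y)|\big\rvert\le|u(x)-u(y)|$ gives $[|u|]_{W^{s,p}(M)}\le[u]_{W^{s,p}(M)}$ while leaving all $L^q$-norms invariant, so I may reduce to $u\ge 0$. Write $u=v+c$ with $c=u_M\ge 0$ and $v=u-c$ of mean zero, so $[v]_{W^{s,p}(M)}=[u]_{W^{s,p}(M)}$. The fractional Poincar\'e inequality (Proposition~\ref{pro4.8}) combined with Lemma~\ref{lem4.12}, as in the proof of Theorem~\ref{thm4.13}, yields the zero-mean Sobolev bound $\|v\|_{L^{p_s^*}(M)}\le A_1[u]_{W^{s,p}(M)}$, while Jensen's inequality for the convex map $t\mapsto t^p$ gives $\|u\|_{L^p(M)}^p\ge c^p\Vol(M)$, which rearranges into
\[
\Vol(M)^{-sp/n}\|u\|_{L^p(M)}^p\ \ge\ c^p\Vol(M)^{p/p_s^*}\ =\ \|c\|_{L^{p_s^*}(M)}^p.
\]
Thus the sharp $L^p$-mass term already dominates $\|c\|_{L^{p_s^*}(M)}^p$ exactly, and the target inequality reduces to the \emph{gap estimate}
\[
\|u\|_{L^{p_s^*}(M)}^p-\|c\|_{L^{p_s^*}(M)}^p\ \le\ A\,[u]_{W^{s,p}(M)}^p.
\]

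The analytic heart of the argument is the scalar inequality
\[
(a+b)^p\ \le\ a^p+p\,a^{p-1}b+b^p,\qquad a,b\ge 0,\ p\in[1,2],
\]
whose trailing coefficient $1$ in front of $b^p$ is sharp: it follows by checking that $g(t)=(1+t)^p-1-pt-t^p$ vanishes at $t=0$ and satisfies $g'(t)=p\bigl[(1+t)^{p-1}-1-t^{p-1}\bigr]\le 0$ on $[0,\infty)$, via the subadditivity of $s\mapsto s^{p-1}$ when $p-1\in[0,1]$. Plugging $a=\|v\|_{L^{p_s^*}}$, $b=\|c\|_{L^{p_s^*}}$ and combining with Minkowski yields
\[
\|u\|_{L^{p_s^*}}^p\ \le\ \|v\|_{L^{p_s^*}}^p+p\,\|v\|_{L^{p_s^*}}^{p-1}\|c\|_{L^{p_s^*}}+\|c\|_{L^{p_s^*}}^p.
\]
The extreme terms are handled by $\|v\|_{L^{p_s^*}}^p\le A_1^p[u]^p$ and $\|c\|_{L^{p_s^*}}^p\le\Vol(M)^{-sp/n}\|u\|_{L^p}^p$; the cross term $p\,\|v\|_{L^{p_s^*}}^{p-1}\|c\|_{L^{p_s^*}}$ is absorbed into $A\,[u]^p$ using $\|v\|_{L^{p_s^*}}^{p-1}\le A_1^{p-1}[u]^{p-1}$ together with a Young splitting in the conjugate exponents $(p/(p-1),p)$, carefully tuned so that the coefficient of $\|u\|_{L^p}^p$ remains exactly $\Vol(M)^{-sp/n}$.

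The main obstacle is precisely this cross-term absorption: the Young parameter has to be chosen so that no additional $\Vol(M)^{-sp/n}$-mass leaks into the lower-order term, since the worst case, $v$ small relative to $c$, corresponds exactly to testing on constants and pins the leading constant at its Hölder value. The restriction $p\le 2$ enters decisively in two places: first through the sharp trailing coefficient $1$ in the scalar inequality above, which fails for $p>2$, and second through the Taylor expansion $\|v+c\|_{L^{p_s^*}}^p-c^p\Vol(M)^{p/p_s^*}\sim c^{p-2}\|v\|_{L^2}^2$ as $c\to\infty$, which is bounded for $p\le 2$ and diverges for $p>2$ — the latter producing exactly the counter-examples of (B3) of Theorem~\ref{thm1.1} in the range $p\in(2,n)$. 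The endpoints $p=1$ (pure Minkowski, no cross term) and $p=2$ (the binomial identity, recovering the result of \cite{Rey-Saintier2024} mentioned in Remark~\ref{rem1.3}) serve as the boundary consistency checks.
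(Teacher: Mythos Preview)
Your cross-term absorption is where the argument breaks. Test with $u_\varepsilon=1+\varepsilon\phi$ for a fixed smooth $\phi$ with $\int_M\phi\,d\mu=0$. Then $c=1$, $v=\varepsilon\phi$, and
\[
[u_\varepsilon]_{W^{s,p}}^{\,p}=\varepsilon^{p}[\phi]^p,
\qquad
p\,\|v\|_{L^{p_s^*}}^{\,p-1}\|c\|_{L^{p_s^*}}\ \sim\ \varepsilon^{\,p-1},
\]
while the only slack available in the $L^p$-budget is
\[
\Vol(M)^{-sp/n}\|u_\varepsilon\|_{L^p}^p-\|c\|_{L^{p_s^*}}^p
=\Vol(M)^{-sp/n}\Bigl(\int_M|1+\varepsilon\phi|^p\,d\mu-\Vol(M)\Bigr)
=O(\varepsilon^{2}),
\]
since $\int\phi=0$ kills the first-order term. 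For every $p\in(1,2]$ one has $\varepsilon^{p-1}\gg\max(\varepsilon^{p},\varepsilon^{2})$ as $\varepsilon\to0$, so no finite $A$ and no Young splitting can absorb the cross term without inflating the coefficient in front of $\|u\|_{L^p}^p$ beyond $\Vol(M)^{-sp/n}$. Your ``carefully tuned'' Young step is therefore impossible: any weight that kills the $\|c\|^p$ contribution sends the $[u]^p$ coefficient to infinity.

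The underlying reason is that Minkowski followed by your scalar inequality is a \emph{first-order} bound, tight when $v$ and the constant $c$ are positively aligned; but here $v$ has mean zero, so the true gap $\|v+c\|_{L^{p_s^*}}^p-\|c\|_{L^{p_s^*}}^p$ is genuinely second order in $v$. The paper exploits exactly this: it proves a Bakry-type convexity inequality
\[
\Bigl(\int_M|u|^{p_s^*}d\mu\Bigr)^{2/p_s^*}
\le
\Vol(M)^{-\frac{2(p_s^*-1)}{p_s^*}}\Bigl|\int_M u\,d\mu\Bigr|^{2}
+(p_s^*-1)\Bigl(\int_M|u-u_M|^{p_s^*}d\mu\Bigr)^{2/p_s^*}
\]
by differentiating $t\mapsto\bigl(\int|1+tv|^{p_s^*}\bigr)^{2/p_s^*}$ twice and bounding $\varphi''$ via H\"older; only then does one raise to the power $p/2\le1$ and use subadditivity. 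This second-order comparison is what your Minkowski route misses, and the restriction $p\le2$ enters precisely through the exponent $p/2\le1$ in that last step (and through a separate elementary argument when $p_s^*\le2$), not through the scalar inequality you isolate.
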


\begin{proof}
Let
\[
u_M=\frac{1}{\Vol(M)}\int_M u\,d\mu,
\qquad
p_s^*=\frac{np}{n-sp},
\qquad
V=\Vol(M).
\]
We distinguish the cases \(p_s^*\ge 2\) and \(p_s^*\le 2\).

Case A: \(p_s^*\ge 2\).
We first establish the Bakry-type convexity inequality: for all \(u\in L^{p_s^*}(M)\),
\begin{equation}\label{eq4.32}
\Bigl(\int_M |u|^{p_s^*}\,d\mu\Bigr)^{2/p_s^*}
\le
V^{-\,\frac{2(p_s^*-1)}{p_s^*}}\Bigl|\int_M u\,d\mu\Bigr|^2
+(p_s^*-1)\Bigl(\int_M |u-u_M|^{p_s^*}\,d\mu\Bigr)^{2/p_s^*}.
\end{equation}
If \(\int_M u\,d\mu=0\), then \(u_M=0\) and \eqref{eq4.32} is trivial since \(p_s^*-1\ge 1\).
Assume \(\int_M u\,d\mu\ne 0\). By homogeneity and by replacing \(u\) with \(-u\) if needed,
it suffices to consider \(u\in C^0(M)\) with \(\int_M u\,d\mu=V\), hence \(u_M=1\).

Write \(u=1+t v\) with \(t\ge 0\) and \(\int_M v\,d\mu=0\). Define
\[
\varphi(t)=\Bigl(\int_M |1+t v|^{p_s^*}\,d\mu\Bigr)^{2/p_s^*}.
\]
Since \(p_s^*\ge 2\), the map \(r\mapsto |r|^{p_s^*}\) is \(C^2\), hence \(\varphi\in C^2([0,\infty))\).
A direct computation gives \(\varphi(0)=V^{2/p_s^*}\) and \(\varphi'(0)=0\). Moreover,
\begin{align*}
\varphi''(t)
&=2p_s^*\Bigl(\frac{2}{p_s^*}-1\Bigr)
\Bigl(\int_M |1+t v|^{p_s^*}\,d\mu\Bigr)^{\frac{2}{p_s^*}-2}
\Bigl(\int_M |1+t v|^{p_s^*-1}\mathrm{sgn}(1+t v)\,v\,d\mu\Bigr)^2 \\
&\quad+2(p_s^*-1)\Bigl(\int_M |1+t v|^{p_s^*}\,d\mu\Bigr)^{\frac{2}{p_s^*}-1}
\int_M |1+t v|^{p_s^*-2}v^2\,d\mu.
\end{align*}
Since \(p_s^*\ge 2\), one has \(\frac{2}{p_s^*}-1\le 0\), hence the first term is nonpositive. By Hölder's inequality,
\[
\int_M |1+t v|^{p_s^*-2} v^2\,d\mu
\le
\Bigl(\int_M |1+t v|^{p_s^*}\,d\mu\Bigr)^{1-\frac{2}{p_s^*}}
\Bigl(\int_M |v|^{p_s^*}\,d\mu\Bigr)^{\frac{2}{p_s^*}}.
\]
Therefore,
\[
\varphi''(t)\le 2(p_s^*-1)\Bigl(\int_M |v|^{p_s^*}\,d\mu\Bigr)^{2/p_s^*}
\quad\text{for all }t\ge 0.
\]
Integrating twice and using \(\varphi'(0)=0\) yields
\[
\varphi(t)\le V^{2/p_s^*}+(p_s^*-1)t^2\Bigl(\int_M |v|^{p_s^*}\,d\mu\Bigr)^{2/p_s^*}.
\]
Taking \(t=1\) and recalling \(u=1+v\) gives \eqref{eq4.32}.

Raise both sides of \eqref{eq4.32} to the power \(p/2\in(0,1]\) and use
\((a+b)^{p/2}\le a^{p/2}+b^{p/2}\) for \(a,b\ge 0\):
\[
\Bigl(\int_M |u|^{p_s^*}\,d\mu\Bigr)^{p/p_s^*}
\le
V^{-\,\frac{(p_s^*-1)p}{p_s^*}}\Bigl|\int_M u\,d\mu\Bigr|^{p}
+(p_s^*-1)^{p/2}\Bigl(\int_M |u-u_M|^{p_s^*}\,d\mu\Bigr)^{p/p_s^*}.
\]
By Hölder's inequality,
\[
\Bigl|\int_M u\,d\mu\Bigr|
\le V^{1-\frac{1}{p}}\|u\|_{L^p(M)},
\]
hence
\[
V^{-\,\frac{(p_s^*-1)p}{p_s^*}}\Bigl|\int_M u\,d\mu\Bigr|^{p}
\le V^{(p-1)-\frac{(p_s^*-1)p}{p_s^*}} \|u\|_{L^p(M)}^p
=V^{-\,\frac{sp}{n}}\|u\|_{L^p(M)}^p,
\]
since \((p-1)-\frac{(p_s^*-1)p}{p_s^*}=-1+\frac{p}{p_s^*}=-\frac{sp}{n}\).

It remains to control \(\|u-u_M\|_{L^{p_s^*}(M)}^p\) by the energy.
Apply Lemma~\ref{lem4.12} to \(w=u-u_M\):
\[
\|u-u_M\|_{L^{p_s^*}(M)}
\le A_*\,[u]_{W^{s,p}(M)} + B_*\,\|u-u_M\|_{L^p(M)}.
\]
By Proposition~\ref{pro4.8}, \(\|u-u_M\|_{L^p(M)}\le C_P [u]_{W^{s,p}(M)}\), hence
\[
\|u-u_M\|_{L^{p_s^*}(M)}\le (A_*+B_*C_P)\,[u]_{W^{s,p}(M)}.
\]
Taking \(p\)-th powers and recalling
\(
[u]_{W^{s,p}(M)}^p=\iint_{M\times M}|u(x)-u(y)|^p\Kps(x,y)\,d\mu(x)\,d\mu(y)
\),
we obtain
\[
\Bigl(\int_M |u-u_M|^{p_s^*}\,d\mu\Bigr)^{p/p_s^*}
\le (A_*+B_*C_P)^p
\iint_{M\times M}|u(x)-u(y)|^p\Kps(x,y)\,d\mu(x)\,d\mu(y).
\]
Combining the last three displays yields \eqref{eq4.31} in Case A with
\[
A=(p_s^*-1)^{p/2}(A_*+B_*C_P)^p.
\]

Case B: \(p_s^*\le 2\).
If \(u_M=0\), then Lemma~\ref{lem4.12} and Proposition~\ref{pro4.8} give
\[
\Bigl(\int_M |u|^{p_s^*}\,d\mu\Bigr)^{p/p_s^*}
\le C_B
\iint_{M\times M}|u(x)-u(y)|^p\Kps(x,y)\,d\mu(x)\,d\mu(y),
\]
and \eqref{eq4.31} follows since the \(L^p\)-term has a nonnegative coefficient.

Assume \(u_M\ne 0\) and write \(u=u_M(1+v)\) with \(\int_M v\,d\mu=0\).
We claim that for every \(w\in L^{p_s^*}(M)\),
\begin{equation}\label{eq4.33}
\Bigl(\int_M |w|^{p_s^*}\,d\mu\Bigr)^{p/p_s^*}
\le
V^{\frac{p}{p_s^*}-p}\Bigl|\int_M w\,d\mu\Bigr|^p
+ C_{p_s^*} \Bigl(\int_M |w-w_M|^{p_s^*}\,d\mu\Bigr)^{p/p_s^*},
\end{equation}
where \(C_{p_s^*}>0\) depends only on \(p_s^*\).

To prove \eqref{eq4.33}, by homogeneity it suffices to take \(w=1+v\) with \(\int_M v\,d\mu=0\).
For \(p_s^*\in(1,2]\) we use
\[
(1+x)^{p_s^*}\le 1+p_s^* x+x^{p_s^*}\ (x\ge 0),
\qquad
(1-x)^{p_s^*}\le 1-p_s^* x+x^{p_s^*}\ (0\le x\le 1),
\qquad
(x-1)^{p_s^*}\le x^{p_s^*}\ (x\ge 1).
\]
Decompose \(M=A\cup B\cup C\) with
\[
A=\{v\ge 0\},
\qquad
B=\{-1\le v<0\},
\qquad
C=\{v<-1\}.
\]
Then
\begin{align*}
\int_M |1+v|^{p_s^*}\,d\mu
&\le \mu(\{v\ge -1\})+\int_M |v|^{p_s^*}\,d\mu + p_s^*\int_{\{v<-1\}} |v|\,d\mu.
\end{align*}
By Hölder,
\[
\int_{\{v<-1\}} |v|\,d\mu
\le \|v\|_{L^{p_s^*}(M)}\,\mu(\{v<-1\})^{1-\frac{1}{p_s^*}}.
\]
Set \(X_0=\mu(\{v\ge -1\})\in[0,V]\), \(t=V-X_0\), and \(s=\|v\|_{L^{p_s^*}(M)}^{p_s^*}\). Then
\[
\int_M |1+v|^{p_s^*}\,d\mu
\le V+s+\Bigl(-t+p_s^* s^{1/p_s^*} t^{(p_s^*-1)/p_s^*}\Bigr).
\]
A direct maximization in \(t\ge 0\) gives
\[
\sup_{t\ge 0}\Bigl(-t+p_s^* s^{1/p_s^*} t^{(p_s^*-1)/p_s^*}\Bigr)
=(p_s^*-1)^{p_s^*-1}s.
\]
Therefore,
\[
\int_M |1+v|^{p_s^*}\,d\mu
\le V+\Bigl(1+(p_s^*-1)^{p_s^*-1}\Bigr)\int_M |v|^{p_s^*}\,d\mu.
\]
Since \(p/p_s^*\le 1\), \((a+b)^{p/p_s^*}\le a^{p/p_s^*}+b^{p/p_s^*}\) for \(a,b\ge 0\), hence
\[
\Bigl(\int_M |1+v|^{p_s^*}\,d\mu\Bigr)^{p/p_s^*}
\le V^{p/p_s^*}
+\Bigl(1+(p_s^*-1)^{p_s^*-1}\Bigr)^{p/p_s^*}
\Bigl(\int_M |v|^{p_s^*}\,d\mu\Bigr)^{p/p_s^*}.
\]
This is \eqref{eq4.33} with
\[
C_{p_s^*}=\Bigl(1+(p_s^*-1)^{p_s^*-1}\Bigr)^{p/p_s^*}.
\]
The general \(w\) follows by writing \(w=w_M(1+v)\).

Apply \eqref{eq4.33} to \(w=u\). Since \(u_M=\frac{1}{V}\int_M u\,d\mu\),
\[
V^{p/p_s^*}|u_M|^p=V^{\frac{p}{p_s^*}-p}\Bigl|\int_M u\,d\mu\Bigr|^p.
\]
Moreover, by Hölder,
\[
|u_M|^p\le V^{-1}\|u\|_{L^p(M)}^p,
\qquad
V^{p/p_s^*}|u_M|^p\le V^{\frac{p}{p_s^*}-1}\|u\|_{L^p(M)}^p
=V^{-\,\frac{sp}{n}}\|u\|_{L^p(M)}^p.
\]
Finally, Lemma~\ref{lem4.12} and Proposition~\ref{pro4.8} give
\[
\Bigl(\int_M |u-u_M|^{p_s^*}\,d\mu\Bigr)^{p/p_s^*}
\le (A_*+B_*C_P)^p
\iint_{M\times M}|u(x)-u(y)|^p\Kps(x,y)\,d\mu(x)\,d\mu(y).
\]
Combining these estimates yields \eqref{eq4.31} in Case B with
\[
A=C_{p_s^*}(A_*+B_*C_P)^p.
\]

This completes the proof of \eqref{eq4.31}. Testing \eqref{eq4.31} with \(u\equiv 1\) shows that the coefficient
\(V^{-sp/n}\) in front of \(\int_M |u|^p\,d\mu\) is optimal.
\end{proof}

\begin{theorem}\label{thm4.15}
Let $(M,g)$ be a closed Riemannian $n$--manifold with $n\ge3$, let $s\in(0,1)$, and let $p\in(2,n)$ satisfy $sp<n$.  
Then the optimal inequality $(J^{\,p}_{p,\mathrm{opt}})$ cannot hold for all $u\in W^{s,p}(M)$.
\end{theorem}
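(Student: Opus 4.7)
The plan is to argue by contradiction and test $(J^{\,p}_{p,\mathrm{opt}})$ on a near-constant family $u_\varepsilon = 1 + \varepsilon v$ with $v\in C^\infty(M)$ satisfying $\int_M v\,d\mu=0$ and $\|v\|_{L^2(M)}\neq 0$ (such $v$ is obtained by subtracting the mean from any non-constant smooth function). Suppose there existed $\overline{A}>0$ with
\[
\|u\|_{L^{p_s^*}(M)}^p \le \overline{A}\,[u]_{W^{s,p}(M)}^p + \Vol(M)^{-sp/n}\|u\|_{L^p(M)}^p \qquad\forall\,u\in W^{s,p}(M),
\]
the lower-order constant being $\overline{\beta_p(M)}=\Vol(M)^{-sp/n}$ from the $\mathcal{B}$--program. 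For $|\varepsilon|$ small, $1+\varepsilon v$ lies in a fixed compact subinterval of $(0,\infty)$ on which $r\mapsto r^q$ is smooth, so one gets a uniform second-order Taylor expansion with cubic remainder. Integrating and using $\int_M v\,d\mu=0$ yields, with $V=\Vol(M)$,
\[
\int_M |1+\varepsilon v|^{q}\,d\mu = V + \frac{q(q-1)}{2}\,\varepsilon^2\|v\|_{L^2(M)}^2 + O(\varepsilon^3),\qquad q\in\{p,p_s^*\}.
\]

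The next step is to extract the $\varepsilon^2$-balance of $(J^{\,p}_{p,\mathrm{opt}})$. Raising the $L^{p_s^*}$-integral to the power $p/p_s^*$ and expanding once more, together with the identity $V^{p/p_s^*-1}=V^{-sp/n}$, gives
\[
\|u_\varepsilon\|_{L^{p_s^*}(M)}^{p}
= V^{p/p_s^*} + V^{-sp/n}\,\frac{p(p_s^*-1)}{2}\,\varepsilon^2\|v\|_{L^2(M)}^2 + O(\varepsilon^3),
\]
and
\[
V^{-sp/n}\|u_\varepsilon\|_{L^p(M)}^{p}
= V^{p/p_s^*} + V^{-sp/n}\,\frac{p(p-1)}{2}\,\varepsilon^2\|v\|_{L^2(M)}^2 + O(\varepsilon^3),
\]
so the constant terms $V^{p/p_s^*}$ cancel exactly on the two sides. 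Because the nonlocal energy is $p$-homogeneous, $[u_\varepsilon]_{W^{s,p}(M)}^p=\varepsilon^p[v]_{W^{s,p}(M)}^p$. Subtracting the common constant, dividing by $\varepsilon^2$, and reorganizing turns the hypothetical inequality into
\[
V^{-sp/n}\,\frac{p(p_s^*-1)}{2}\|v\|_{L^2(M)}^2
\le V^{-sp/n}\,\frac{p(p-1)}{2}\|v\|_{L^2(M)}^2
+ \overline{A}\,\varepsilon^{p-2}[v]_{W^{s,p}(M)}^p + O(\varepsilon).
\]

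Now comes the crucial scaling observation: since $p>2$, the energy contribution $\overline{A}\,\varepsilon^{p-2}[v]_{W^{s,p}(M)}^p$ tends to $0$ as $\varepsilon\to 0^+$, no matter how large $\overline{A}$ is chosen. Letting $\varepsilon\to 0^+$ and using $\|v\|_{L^2(M)}^2>0$ would force $p_s^*-1\le p-1$, i.e.\ $p_s^*\le p$; but $p_s^*=\frac{np}{n-sp}>p$, a contradiction. The mechanism is that in the superquadratic range $p>2$ the nonlocal seminorm vanishes too fast (to order $\varepsilon^p$) to compensate for the strict gap $p_s^*-p>0$ between the quadratic coefficients produced by the $L^{p_s^*}$ and $L^p$ norms. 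The only nontrivial technicality is the uniform control of the cubic remainder in the Taylor expansion of $r\mapsto r^q$ at $r=1$ for non-integer exponents; this reduces to a routine $C^2$ bound on the compact interval where $1+\varepsilon v$ takes values, so the heart of the argument is the elementary exponent inequality $p_s^*>p$ combined with the degree mismatch between the quadratic $L^p$-scaling and the $p$-homogeneous energy scaling.
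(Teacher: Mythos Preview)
Your proof is correct and follows essentially the same approach as the paper: both test the hypothetical optimal inequality on the near-constant family $1+\varepsilon v$, Taylor-expand the $L^{p_s^*}$ and $L^p$ norms to second order, use that the Gagliardo term scales as $\varepsilon^p=o(\varepsilon^2)$ for $p>2$, and derive a contradiction from the $\varepsilon^2$-coefficients. Your choice of $v$ with zero mean is a minor streamlining of the paper's argument (which keeps $m_1=\int_M u\,d\mu$ general and concludes via Cauchy--Schwarz), but the mechanism is identical.
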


\begin{proof}
Fix a nonconstant function \(u\in C^\infty(M)\). Set
\[
V=\Vol(M),\qquad 
m_1=\int_M u\,d\mu,\qquad 
m_2=\int_M u^2\,d\mu.
\]
Since \(M\) is compact, \(\|u\|_{L^\infty(M)}<\infty\). Choose
\[
\varepsilon_0=\frac{1}{2\|u\|_{L^\infty(M)}}>0.
\]
For \(0<\varepsilon<\varepsilon_0\) define \(u_\varepsilon=1+\varepsilon u\). Then
\(u_\varepsilon(x)\ge \frac12\) on \(M\), hence \(|u_\varepsilon|^t=u_\varepsilon^t\) for every \(t>0\).

Let \(t>2\). By Taylor's formula for the \(C^2\) function \(r\mapsto r^t\) around \(r=1\),
\[
(1+\varepsilon u)^t
=1+t\varepsilon u+\frac{t(t-1)}{2}\varepsilon^2 u^2 + \varepsilon^2 r_{t,\varepsilon}(x),
\]
where \(r_{t,\varepsilon}\to 0\) uniformly on \(M\) as \(\varepsilon\to 0\).
Integrating over \(M\) yields
\begin{equation}\label{eq4.34}
\int_M |1+\varepsilon u|^{t}\,d\mu
=V+t\,\varepsilon\,m_1+\frac{t(t-1)}{2}\varepsilon^2\,m_2+o(\varepsilon^2)
\qquad (\varepsilon\to0).
\end{equation}

Applying \eqref{eq4.34} with \(t=p\) gives
\begin{equation}\label{eq4.35}
\int_M |u_\varepsilon|^{p}\,d\mu
=V+p\varepsilon m_1+\frac{p(p-1)}{2}\varepsilon^2 m_2+o(\varepsilon^2).
\end{equation}
Applying \eqref{eq4.34} with \(t=p_s^*\) and then raising to \(\alpha=p/p_s^*\in(0,1)\), we use
\[
(V+a\varepsilon+b\varepsilon^2)^\alpha
=
V^\alpha+\alpha V^{\alpha-1}a\varepsilon
+\Bigl(\alpha V^{\alpha-1}b+\frac{\alpha(\alpha-1)}{2}V^{\alpha-2}a^2\Bigr)\varepsilon^2
+o(\varepsilon^2),
\]
with \(a=p_s^* m_1\), \(b=\frac{p_s^*(p_s^*-1)}{2}m_2\), to obtain
\begin{equation}\label{eq4.36}
\Bigl(\int_M |u_\varepsilon|^{p_s^*}\,d\mu\Bigr)^{p/p_s^*}
=V^{p/p_s^*}
+p\,V^{\frac{p}{p_s^*}-1}\varepsilon m_1
+\Bigl[\frac{p(p_s^*-1)}{2}V^{\frac{p}{p_s^*}-1}m_2
+\frac{p(p-p_s^*)}{2}V^{\frac{p}{p_s^*}-2}m_1^2\Bigr]\varepsilon^2
+o(\varepsilon^2).
\end{equation}

For the Gagliardo term, since \(u_\varepsilon(x)-u_\varepsilon(y)=\varepsilon(u(x)-u(y))\),
\[
\iint_{M\times M}|u_\varepsilon(x)-u_\varepsilon(y)|^p\,\Kps(x,y)\,d\mu(x)\,d\mu(y)
=\varepsilon^p
\iint_{M\times M}|u(x)-u(y)|^p\,\Kps(x,y)\,d\mu(x)\,d\mu(y).
\]
Because \(p>2\), we have \(\varepsilon^p=o(\varepsilon^2)\) as \(\varepsilon\to 0\), hence the above term is \(o(\varepsilon^2)\).

Assume, toward a contradiction, that \((J^{\,p}_{p,\mathrm{opt}})\) holds for all \(u\in W^{s,p}(M)\), namely
\[
\Bigl(\int_M |u|^{p_s^*}\,d\mu\Bigr)^{p/p_s^*}
\le
A
\iint_{M\times M}|u(x)-u(y)|^p\,\Kps(x,y)\,d\mu(x)\,d\mu(y)
+
V^{-sp/n}\int_M |u|^p\,d\mu
\]
for some fixed constant \(A\in\mathbb{R}\).

Apply this inequality to \(u_\varepsilon\) and insert \eqref{eq4.35}--\eqref{eq4.36}. Using the estimate above for the
Gagliardo term, we obtain
\[
\Bigl(\int_M |u_\varepsilon|^{p_s^*}\,d\mu\Bigr)^{p/p_s^*}
\le
A\,o(\varepsilon^2)
+
V^{-sp/n}\int_M |u_\varepsilon|^p\,d\mu.
\]
Since \(\frac{p}{p_s^*}=1-\frac{sp}{n}\), the constant and linear terms in \(\varepsilon\) match identically.
Comparing the coefficients of \(\varepsilon^2\) yields
\[
\frac{p(p_s^*-1)}{2}V^{\frac{p}{p_s^*}-1}m_2
+\frac{p(p-p_s^*)}{2}V^{\frac{p}{p_s^*}-2}m_1^2
\le
\frac{p(p-1)}{2}V^{-sp/n}m_2.
\]
Using \(V^{\frac{p}{p_s^*}-1}=V^{-sp/n}\) and \(V^{\frac{p}{p_s^*}-2}=V^{-1-sp/n}\), this becomes
\[
(p_s^*-1)m_2+(p-p_s^*)V^{-1}m_1^2\le (p-1)m_2,
\]
that is,
\[
(p_s^*-p)m_2\le (p_s^*-p)V^{-1}m_1^2.
\]
Since \(sp<n\) implies \(p_s^*>p\), we have \(p_s^*-p>0\), and hence
\[
\int_M u^2\,d\mu\le \frac{1}{V}\Bigl(\int_M u\,d\mu\Bigr)^2.
\]
By Cauchy--Schwarz,
\[
\int_M u^2\,d\mu\ge \frac{1}{V}\Bigl(\int_M u\,d\mu\Bigr)^2,
\]
with equality if and only if \(u\) is constant. This contradicts the choice of \(u\) nonconstant.

Therefore the optimal inequality \((J^{\,p}_{p,\mathrm{opt}})\) cannot hold for all \(u\in W^{s,p}(M)\).
\end{proof}

\subsection{The A--program: optimal and improved leading coefficients}

In this subsection, we pursue the following two goals.
\begin{enumerate}\itemsep0.2em
\item We establish an almost sharp fractional Sobolev embedding
\[
W^{s,p}(M)\hookrightarrow L^{p_s^*}(M),
\qquad
p_s^*=\frac{np}{n-sp},
\]
in the precise form stated in Theorem~\ref{thm4.16}.
\item As a consequence of the almost sharp inequality, we derive an improved fractional Sobolev inequality under the constraint \eqref{eq4.41}.
\end{enumerate}

Let \(K(n,s,p)\) be the sharp constant in the Euclidean embedding
\(W^{s,p}(\mathbb{R}^n)\hookrightarrow L^{p_s^*}(\mathbb{R}^n)\), namely the smallest constant such that
\[
\|u\|_{L^{p_s^*}(\mathbb{R}^n)}^{p}\le K(n,s,p)\,[u]_{s,p}^{p}
\qquad\text{for all }u\in W^{s,p}(\mathbb{R}^n).
\]
Equivalently,
\[
K(n,s,p)^{-1}
=\inf_{0\neq u\in W^{s,p}(\mathbb{R}^n)}
\frac{[u]_{s,p}^p}{\|u\|_{L^{p_s^*}(\mathbb{R}^n)}^{p}},
\]
where
\[
[u]_{s,p}^p=\iint_{\mathbb{R}^n\times\mathbb{R}^n}
\frac{|u(x)-u(y)|^p}{|x-y|^{n+sp}}\,dx\,dy.
\]

In the same spirit as \cite{Hang2022, Yan2023}, we obtain an almost sharp fractional Sobolev embedding on \((M,g)\) by means of the concentration--compactness principle.

\begin{theorem}\label{thm4.16}
Let $(M,g)$ be a closed $n$-dimensional Riemannian manifold. Let $s\in(0,1)$ and $p\in(1,\infty)$ with $n>sp$.
Then, for every $\varepsilon>0$ there exists a constant $B=B(M,g,s,p,\varepsilon)>0$ such that for all $u\in W^{s,p}(M)$,
\begin{equation}\label{eq4.37}
  \Bigl(\int_M |u|^{p_s^*}\,d\mu\Bigr)^{\!\frac{p}{p_s^*}}
\le
\bigl(K(n,s,p)+\varepsilon\bigr)\iint_{M\times M} |u(x)-u(y)|^{p}\,\Kps(x,y)d\mu(x)d\mu(y)
+B\int_M |u|^pd\mu.  
\end{equation}
\end{theorem}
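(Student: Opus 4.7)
The plan is to argue by contradiction via a concentration--compactness scheme in the spirit of \cite{Hang2022,Yan2023}, adapted to the intrinsic kernel $\Kps$. Suppose, toward contradiction, that \eqref{eq4.37} fails for some $\varepsilon_0>0$. Then for each $k\ge 1$ there exists $u_k\in W^{s,p}(M)$ with $\|u_k\|_{L^{p_s^*}(M)}=1$ satisfying
\[
[u_k]_{W^{s,p}(M)}^{\,p} < \frac{1}{K(n,s,p)+\varepsilon_0},
\qquad
\|u_k\|_{L^p(M)}^{\,p} < \frac{1}{k}.
\]
Proposition~\ref{pro4.9} gives that $(u_k)$ is bounded in $W^{s,p}(M)$ and the embedding into $L^p(M)$ is compact. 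Up to a subsequence, $u_k\rightharpoonup u$ weakly in $W^{s,p}(M)$, $u_k\to u$ strongly in $L^p(M)$ and a.e.\ on $M$. The $L^p$--decay forces $u\equiv 0$.

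Next, I would introduce the concentration measures
\[
d\nu_k = |u_k|^{p_s^*}\,d\mu,
\qquad
d\mu_k(x) = \Bigl(\int_M |u_k(x)-u_k(y)|^{p}\,\Kps(x,y)\,d\mu(y)\Bigr)\,d\mu(x),
\]
so that $\nu_k(M)=1$ and $\mu_k(M)=[u_k]_{W^{s,p}(M)}^{\,p}$. These are uniformly bounded positive Radon measures on the compact set $M$; hence, after extracting a further subsequence, $\nu_k\rightharpoonup\nu$ and $\mu_k\rightharpoonup\mu$ in the weak-$*$ sense.

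The technical heart of the proof is a fractional manifold analogue of the Lions concentration--compactness principle: since $u\equiv 0$, the limit $\nu$ is purely atomic, and one has
\[
\nu = \sum_{j\in J}\nu_j\,\delta_{x_j},
\qquad
\mu \ge \sum_{j\in J}\mu_j\,\delta_{x_j},
\qquad
\nu_j^{\,p/p_s^*} \le K(n,s,p)\,\mu_j \quad (j\in J).
\]
I would establish this atom by atom by localization. Fix a candidate concentration point $x_0$ and a small $r>0$, take a cutoff $\varphi\in C_c^\infty(B_g(x_0,r))$ with $\varphi\equiv 1$ on $B_g(x_0,r/2)$, transplant $\varphi u_k$ into normal coordinates at $x_0$, and apply the Euclidean sharp inequality with constant $K(n,s,p)$ to the transplanted function. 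The kernel equivalence \eqref{eq4.4} in Remark~\ref{rem4.3} together with the normal--coordinate expansion of $\dist$ gives a $(1+O(r))$--comparison between the intrinsic and Euclidean Gagliardo seminorms on short distances, while the cutoff error and the long--range interactions generate $L^p$--type contributions that vanish because $u_k\to 0$ in $L^p(M)$. Sending $k\to\infty$ and then $r\to 0$ yields the required atomic inequality at $x_0$.

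Finally, I would combine these pieces. Since $\nu(M)=\lim_k\nu_k(M)=1$ and $p/p_s^*=1-sp/n\in(0,1)$, the subadditivity $(\sum a_j)^{p/p_s^*}\le \sum a_j^{p/p_s^*}$ on $[0,\infty)$ gives
\[
\sum_{j\in J}\nu_j^{\,p/p_s^*}
\ge \Bigl(\sum_{j\in J}\nu_j\Bigr)^{p/p_s^*}
= 1.
\]
Weak-$*$ convergence on the compact set $M$ gives $\mu_k(M)\to\mu(M)$, hence
\[
\frac{1}{K(n,s,p)+\varepsilon_0}
\;>\; \lim_{k}[u_k]_{W^{s,p}(M)}^{\,p}
= \mu(M)
\ge \sum_{j\in J}\mu_j
\ge \frac{1}{K(n,s,p)}\sum_{j\in J}\nu_j^{\,p/p_s^*}
\ge \frac{1}{K(n,s,p)},
\]
which is impossible; this contradiction proves \eqref{eq4.37} for every $\varepsilon>0$. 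The main obstacle is the nonlocal concentration--compactness step: because the seminorm is a genuinely double integral, one cannot simply pushforward the energy through a chart, and the cutoff $\varphi$ does not commute with the kernel $\Kps$. One must split the double integral into a short--range piece near the diagonal (comparable to the Euclidean kernel up to $1+O(r)$) and a long--range remainder absorbed by $L^p$--decay, and verify that no loss in the sharp constant occurs when passing to the blow--up limit.
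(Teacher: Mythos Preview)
Your proposal is correct and follows essentially the same concentration--compactness route as the paper: contradiction, normalization $\|u_k\|_{L^{p_s^*}}=1$, the weak limit $u\equiv 0$ from $L^p$--decay, extraction of weak-$*$ limiting measures for $|u_k|^{p_s^*}\,d\mu$ and the energy density, the localized sharp inequality via normal coordinates (with cutoff errors absorbed by $\|u_k\|_{L^p}\to 0$) to obtain the atomic inequality $\nu_j^{p/p_s^*}\le K(n,s,p)\,\mu_j$, and the final contradiction from subadditivity of $t\mapsto t^{p/p_s^*}$. The only cosmetic slip is that the strict inequality in your last display should be a weak one, $\mu(M)\le \frac{1}{K(n,s,p)+\varepsilon_0}$, but this still contradicts $\mu(M)\ge \frac{1}{K(n,s,p)}$ and does not affect the argument.
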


\begin{proof}
Set \(\alpha=K(n,s,p)+\varepsilon\).
Assume by contradiction that \eqref{eq4.37} is false. Then for each \(j\in\mathbb N\) there exists
\(u_j\in W^{s,p}(M)\) such that
\[
\Bigl(\int_M |u_j|^{p_s^*}\,d\mu\Bigr)^{\frac{p}{p_s^*}}
>
\alpha\iint_{M\times M}|u_j(x)-u_j(y)|^p\Kps(x,y)d\mu(x)d\mu(y)
+j\int_M |u_j|^p\,d\mu .
\]
By scaling we may assume
\[
\int_M |u_j|^{p_s^*}\,d\mu=1.
\]
Then
\begin{equation}\label{eq4.37a}
\iint_{M\times M}|u_j(x)-u_j(y)|^p\Kps(x,y)d\mu(x)d\mu(y)<\frac{1}{\alpha},
\qquad
\int_M |u_j|^p\,d\mu<\frac{1}{j}.
\end{equation}
In particular \((u_j)\) is bounded in \(W^{s,p}(M)\) and \(u_j\to0\) strongly in \(L^p(M)\).
Since \(1<p<\infty\), \(W^{s,p}(M)\) is reflexive, hence up to a subsequence
\(u_j\rightharpoonup u\) weakly in \(W^{s,p}(M)\). The strong \(L^p\) convergence forces \(u=0\), so
\[
u_j\rightharpoonup0 \quad\text{weakly in }W^{s,p}(M).
\]

Define finite Borel measures on \(M\) by
\[
\nu_j=|u_j|^{p_s^*}d\mu,
\qquad
\sigma_j=\Bigl(\int_M |u_j(x)-u_j(y)|^p\Kps(x,y)d\mu(y)\Bigr)d\mu(x).
\]
Then \(\nu_j(M)=1\) and \(\sigma_j(M)\) equals the energy in \eqref{eq4.37a}, hence \(\sigma_j(M)<1/\alpha\).
By compactness of \(M\), after passing to a subsequence we have weak-* convergence of measures
\[
\nu_j\rightharpoonup\nu,
\qquad
\sigma_j\rightharpoonup\sigma
\quad\text{in }\mathcal M(M).
\]
In particular,
\begin{equation}\label{eq4.39}
\nu(M)=1,
\qquad
\sigma(M)\le\frac{1}{\alpha}.
\end{equation}

We next record the localized inequality that links \(\nu\) and \(\sigma\).
Fix \(\delta>0\). By normal coordinates and the Euclidean sharp inequality with constant \(K(n,s,p)\),
there exists \(r_\delta>0\) such that for every \(\phi\in C^\infty(M)\) with \(\mathrm{supp}\,\phi\subset B_{r_\delta}(x_0)\)
one has
\begin{equation}\label{eq4.loc}
\Bigl(\int_M |\phi u|^{p_s^*}d\mu\Bigr)^{\frac{p}{p_s^*}}
\le (K(n,s,p)+\delta)\iint_{M\times M}|\phi(x)u(x)-\phi(y)u(y)|^p\Kps(x,y)d\mu(x)d\mu(y)
+C_{\delta,\phi}\int_M |u|^p\,d\mu
\end{equation}
for all \(u\in W^{s,p}(M)\), where \(C_{\delta,\phi}<\infty\).

Apply \eqref{eq4.loc} to \(u=u_j\). Using the inequality
\[
|\phi(x)u_j(x)-\phi(y)u_j(y)|^p
\le C\bigl(|u_j(x)-u_j(y)|^p+|u_j(y)|^p|\phi(x)-\phi(y)|^p\bigr),
\]
the second term contributes at most \(C'_{\phi}\|u_j\|_{L^p(M)}^p\), hence tends to \(0\) by \eqref{eq4.37a}.
Therefore, passing to the limit \(j\to\infty\) in \eqref{eq4.loc} and then letting \(\delta\to0\), we obtain
\begin{equation}\label{eq4.key}
\Bigl(\int_M |\phi|^{p_s^*}d\nu\Bigr)^{\frac{p}{p_s^*}}
\le K(n,s,p)\int_M |\phi|^p\,d\sigma
\qquad\text{for all }\phi\in C^\infty(M).
\end{equation}

The estimate \eqref{eq4.key} implies the concentration--compactness decomposition \cite{Lions1984-1,Lions1984-2}: there exist at most countably many
points \(\{x_i\}\subset M\) and numbers \(\nu_i,\sigma_i\ge0\) such that
\begin{equation}\label{eq4.38}
\nu=\sum_i \nu_i\delta_{x_i},
\qquad
\sigma\ge\sum_i \sigma_i\delta_{x_i},
\qquad
\nu_i^{\frac{p}{p_s^*}}\le K(n,s,p)\sigma_i.
\end{equation}

Since \(\theta=\frac{p}{p_s^*}\in(0,1)\), we have \((a+b)^\theta\le a^\theta+b^\theta\) for \(a,b\ge0\). Using \eqref{eq4.38} and \eqref{eq4.39},
\[
1=\nu(M)^\theta=\Bigl(\sum_i \nu_i\Bigr)^\theta
\le\sum_i \nu_i^\theta
\le K(n,s,p)\sum_i \sigma_i
\le K(n,s,p)\sigma(M)
\le \frac{K(n,s,p)}{\alpha}<1,
\]
a contradiction. Hence \eqref{eq4.37} holds.
\end{proof}

\begin{remark}\label{rem4.17}
In Theorem~\ref{thm4.16}, it is natural to ask whether the following sharp fractional Sobolev inequality holds:
\begin{equation}\label{eq4.40}
\Bigl(\int_M |u|^{p_s^*}\,d\mu\Bigr)^{\frac{p}{p_s^*}}
\le
K(n,s,p)\iint_{M\times M} |u(x)-u(y)|^{p}\Kps(x,y)\,d\mu(x)\,d\mu(y)
+
B\int_M |u|^{p}\,d\mu,
\qquad u\in W^{s,p}(M).
\end{equation}
This would be a fractional analogue of the main result in \cite{Zeitler2025}.

A natural idea, at least when \(p=2\), is to try to adapt the integer-order argument based on a shifted operator \(-\Delta_g+\alpha\) and an identity of the form
\[
\langle (-\Delta_g)u,u\rangle_{L^2}
=
\langle (-\Delta_g+\alpha)u,u\rangle_{L^2}
-\alpha\|u\|_{L^2}^2.
\]
However, for \(s\in(0,1)\) the map \(\lambda\mapsto \lambda^{s}\) is concave on \((0,\infty)\), and one has for every \(\lambda\ge0\) and \(\alpha>0\),
\[
(\lambda+\alpha)^s-\alpha^s \le \lambda^s.
\]
By spectral calculus this yields the inequality, for \(u\in C^\infty(M)\),
\[
\langle (-\Delta_g+\alpha)^s u,u\rangle_{L^2}
-\alpha^s\|u\|_{L^2}^2
\le
\langle (-\Delta_g)^s u,u\rangle_{L^2},
\]
which is the opposite direction from the exact linearization available at \(s=1\).
This lack of a suitable linearization mechanism is one of the obstructions to extending the classical (integer-order) argument to \eqref{eq4.40}.
\end{remark}

\begin{theorem}\label{thm4.18}
Let $(M,g)$ be a closed $n$-dimensional Riemannian manifold. Let $s\in(0,1)$ and $p\in(1,\infty)$ with $n>sp$.
Let $f_i\in C^1(M)$, $i=1,\ldots,N$, be sign-changing functions satisfying
\[
\sum_{i=1}^N |f_i|^p \equiv 1 \text{ on } M,
\]
and assume the orthogonality conditions
\begin{equation}\label{eq4.41}
\int_M f_i |f_i|^{p_s^*-1} |u|^{p_s^*} d\mu = 0,
\qquad i=1,\ldots,N.
\end{equation}
Then for every $\varepsilon>0$ there exists a constant
$B=B(M,g,s,\{f_i\},\varepsilon)>0$ such that for all $u\in W^{s,p}(M)$,
\begin{equation}\label{eq4.42}
\Bigl(\int_M |u|^{p_s^*} d\mu\Bigr)^{\frac{p}{p_s^*}}
\le
\Bigl(\frac{K(n,s,p)}{2^{sp/n}}+\varepsilon\Bigr)
\iint_{M\times M} |u(x)-u(y)|^p \Kps(x,y) d\mu(x) d\mu(y)
+ B\int_M |u|^p d\mu.
\end{equation}
\end{theorem}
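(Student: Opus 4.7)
\textbf{Proof plan for Theorem~\ref{thm4.18}.}
The plan is to follow the contradiction-and-concentration-compactness strategy of Theorem~\ref{thm4.16}, now exploiting the orthogonality relations \eqref{eq4.41} to rule out single-point concentration and to force a balanced splitting of the limit measure that delivers the improvement factor $2^{-sp/n}$.

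First I would fix $\varepsilon>0$, set $\alpha=K(n,s,p)/2^{sp/n}+\varepsilon$, and assume for contradiction that no $B$ makes \eqref{eq4.42} hold. Extracting a sequence $(u_j)\subset W^{s,p}(M)$ satisfying \eqref{eq4.41}, normalized by $\int_M|u_j|^{p_s^*}\,d\mu=1$, with
$[u_j]_{W^{s,p}(M)}^{p}<1/\alpha$ and $\|u_j\|_{L^{p}(M)}\to 0$, I would pass to a subsequence $u_j\rightharpoonup 0$ weakly in $W^{s,p}(M)$ using reflexivity (Proposition~\ref{pro4.6}) and the Rellich-type embedding (Proposition~\ref{pro4.9}).

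Next I would reuse the concentration–compactness analysis already carried out in Theorem~\ref{thm4.16}. Setting $\theta=p/p_s^*=1-sp/n$, this yields weak-$*$ limit measures $\nu=\sum_{k}\nu_k\delta_{x_k}$ and $\sigma\ge\sum_k\sigma_k\delta_{x_k}$ in $\mathcal{M}(M)$, with $\sum_k\nu_k=1$, $\sigma(M)\le 1/\alpha$, and the atomic estimate $\nu_k^{\theta}\le K(n,s,p)\,\sigma_k$ for each $k$. Since each $f_i|f_i|^{p_s^*-1}\in C(M)$ is bounded, weak-$*$ convergence of $\nu_j=|u_j|^{p_s^*}\,d\mu$ transfers the orthogonality to the limit:
\[
\sum_{k} f_i(x_k)\,|f_i(x_k)|^{p_s^*-1}\,\nu_k=0,\qquad i=1,\dots,N.
\]

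The crux is the combinatorial estimate $\sum_k\nu_k^{\theta}\ge 2^{1-\theta}=2^{sp/n}$. A single-atom limit is immediately excluded, for if $\nu=\delta_{x_0}$ then the orthogonality forces $f_i(x_0)=0$ for every $i$, contradicting $\sum_{i}|f_i(x_0)|^p=1$. To upgrade this to the sharp lower bound, the key claim is $\max_k\nu_k\le 1/2$; this is the main obstacle and is where the sign-changing structure of each $f_i$ enters crucially. The approach is to take absolute values in the $i$-th orthogonality at a maximal atom $x_{k_0}$, obtaining
\[
|f_i(x_{k_0})|^{p_s^*}\nu_{k_0}\le\sum_{k\ne k_0}|f_i(x_k)|^{p_s^*}\nu_k,
\]
and then to combine these estimates over $i$ using a suitable convex combination dictated by $\sum_i|f_i|^p\equiv 1$, so that the sign changes of each $f_i$ produce an effective two-way cancellation at $x_{k_0}$ and force $\nu_{k_0}\le 1/2$. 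Granted this, the convexity inequality $\nu_k^{\theta}\ge 2^{1-\theta}\nu_k$ (valid whenever $0\le\nu_k\le 1/2$ and $\theta\in(0,1)$, since $\nu_k^{\theta-1}\ge 2^{1-\theta}$) gives $\sum_k\nu_k^{\theta}\ge 2^{1-\theta}\sum_k\nu_k=2^{sp/n}$. Combining with $\nu_k^{\theta}\le K(n,s,p)\sigma_k$ and $\sum_k\sigma_k\le\sigma(M)\le 1/\alpha$ yields
\[
2^{sp/n}\ \le\ K(n,s,p)\sum_k\sigma_k\ \le\ \frac{K(n,s,p)}{\alpha}\ =\ \frac{K(n,s,p)\,2^{sp/n}}{K(n,s,p)+2^{sp/n}\varepsilon},
\]
which simplifies to $\varepsilon\le 0$, a contradiction, proving \eqref{eq4.42}.
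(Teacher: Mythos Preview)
Your route is genuinely different from the paper's. The paper does not argue by contradiction and concentration--compactness; it applies Theorem~\ref{thm4.16} directly. For each $i$ the orthogonality \eqref{eq4.41} forces
\[
\int_M (f_{i,+})^{p_s^*}|u|^{p_s^*}\,d\mu=\int_M (f_{i,-})^{p_s^*}|u|^{p_s^*}\,d\mu,
\]
whence the exact identity $\|f_i u\|_{L^{p_s^*}}^{p}=2^{-sp/n}\bigl(\|f_{i,+}u\|_{L^{p_s^*}}^{p}+\|f_{i,-}u\|_{L^{p_s^*}}^{p}\bigr)$. One then feeds $f_{i,\pm}u$ into Theorem~\ref{thm4.16}, controls the product energy by a Leibniz-type estimate, sums using $\sum_i|f_i|^p\equiv1$, and recovers $\|u\|_{L^{p_s^*}}^{p}\le\sum_i\|f_iu\|_{L^{p_s^*}}^{p}$ via the triangle inequality in $L^{p_s^*/p}$. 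The factor $2^{-sp/n}$ thus appears \emph{before} any limit is taken, directly from the $\pm$ splitting.

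Your argument, by contrast, rests on the claim $\max_k\nu_k\le\tfrac12$ for the atomic limit measure, and this claim is false. Take $N=2$, three atoms $x_1,x_2,x_3$, and values
\[
f_1(x_1)=f_2(x_1)=2^{-1/p},\quad f_1(x_2)=-1,\ f_2(x_2)=0,\quad f_1(x_3)=0,\ f_2(x_3)=-1,
\]
all consistent with $|f_1|^p+|f_2|^p\equiv1$. The limiting orthogonality relations force $\nu_2=\nu_3=2^{-p_s^*/p}\nu_1$, hence
\[
\nu_1=\frac{1}{1+2^{\,1-p_s^*/p}}>\frac12
\]
since $p_s^*/p>1$. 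So the inequality you isolate at $x_{k_0}$,
\[
|f_i(x_{k_0})|^{p_s^*}\nu_{k_0}\le\sum_{k\ne k_0}|f_i(x_k)|^{p_s^*}\nu_k,
\]
cannot be ``summed over $i$'' to yield $\nu_{k_0}\le 1-\nu_{k_0}$: the hypothesis controls $\sum_i|f_i|^p$, not $\sum_i|f_i|^{p_s^*}$, and the mismatch of exponents breaks the combination you sketch.

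In the counterexample above the target estimate $\sum_k\nu_k^{\theta}\ge 2^{1-\theta}$ \emph{does} survive, so your approach might be salvageable, but not through the intermediate bound $\max_k\nu_k\le\tfrac12$. Any repair essentially forces you to rediscover the paper's mechanism at the measure level: split each $\int_M|f_i|^{p_s^*}\,d\nu$ into its $f_{i,+}$ and $f_{i,-}$ halves (equal by orthogonality), get the $2^{-sp/n}$ gain from $(2A)^{\theta}=2^{\theta-1}(A^\theta+A^\theta)$, and only then compare with $\sigma$.
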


\begin{proof}
For each $i$, set $f_{i,+}=\max\{f_i,0\}$ and $f_{i,-}=\max\{-f_i,0\}$, so that
$f_i=f_{i,+}-f_{i,-}$, $|f_i|=f_{i,+}+f_{i,-}$, and $f_{i,+} f_{i,-}\equiv 0$.
Since $f_i\in C^1(M)$, both $f_{i,+}$ and $f_{i,-}$ are Lipschitz on $M$.

From \eqref{eq4.41} we get
\[
0=\int_M \bigl((f_{i,+})^{p_s^*}-(f_{i,-})^{p_s^*}\bigr)|u|^{p_s^*} d\mu,
\]
hence
\[
A_i=\int_M (f_{i,+})^{p_s^*}|u|^{p_s^*} d\mu
=
\int_M (f_{i,-})^{p_s^*}|u|^{p_s^*} d\mu
=B_i.
\]
Since $\frac{p}{p_s^*}=1-\frac{sp}{n}\in(0,1)$, we have the identity
\begin{equation}\label{eq4.43}
\|f_i u\|_{L^{p_s^*}(M)}^p
=
(A_i+B_i)^{\frac{p}{p_s^*}}
=
(2A_i)^{\frac{p}{p_s^*}}
=
2^{-sp/n}\bigl(\|f_{i,+}u\|_{L^{p_s^*}(M)}^p+\|f_{i,-}u\|_{L^{p_s^*}(M)}^p\bigr).
\end{equation}

Fix $\varepsilon>0$. Apply Theorem~\ref{thm4.16} to $f_{i,+}u$ and $f_{i,-}u$ with a parameter
$\varepsilon_1>0$ to be chosen later. For each $i$ and $\sigma\in\{+,-\}$ we obtain
\[
\|f_{i,\sigma}u\|_{L^{p_s^*}(M)}^p
\le
\bigl(K(n,s,p)+\varepsilon_1\bigr)
\iint_{M\times M} |f_{i,\sigma}(x)u(x)-f_{i,\sigma}(y)u(y)|^p \Kps(x,y) d\mu(x)d\mu(y)
+ B_{i,\sigma}\int_M |u|^p d\mu,
\]
where $B_{i,\sigma}=B(M,g,s,p,\varepsilon_1,f_{i,\sigma})$ and we used $|f_{i,\sigma}u|^p\le \|f_i\|_{L^\infty}^p |u|^p$.

Insert these bounds into \eqref{eq4.43}. It remains to estimate the product energy.
Fix a Lipschitz function $f$ and write
\[
f(x)u(x)-f(y)u(y)=f(x)(u(x)-u(y))+(f(x)-f(y))u(y).
\]
For any $\delta\in(0,1)$, Young's inequality gives
\[
|a+b|^p \le (1+\delta)^{p-1}|a|^p+\Bigl(1+\frac{1}{\delta}\Bigr)^{p-1}|b|^p.
\]
Applying this with $a=f(x)(u(x)-u(y))$ and $b=(f(x)-f(y))u(y)$ yields
\[
|f(x)u(x)-f(y)u(y)|^p
\le
(1+\delta)^{p-1}|f(x)|^p|u(x)-u(y)|^p
+\Bigl(1+\frac{1}{\delta}\Bigr)^{p-1}|u(y)|^p|f(x)-f(y)|^p.
\]
Integrating against $\Kps(x,y)\,d\mu(x)d\mu(y)$ and using Fubini, we obtain
\begin{equation}\label{eq4.45}
\begin{aligned}
&\iint_{M\times M} |f(x)u(x)-f(y)u(y)|^p \Kps(x,y)\,d\mu(x)d\mu(y)\\
&\le
(1+\delta)^{p-1}\iint_{M\times M} |f(x)|^p |u(x)-u(y)|^p \Kps(x,y)\,d\mu(x)d\mu(y)
+ C(f,\delta)\int_M |u|^p d\mu,
\end{aligned}
\end{equation}
where $C(f,\delta)<\infty$ is obtained as follows: by the upper bound
$\Kps(x,y)\le C\,\dist(x,y)^{-n-sp}$ and the Lipschitz bound $|f(x)-f(y)|\le L_f\dist(x,y)$,
\[
\sup_{y\in M}\int_M |f(x)-f(y)|^p \Kps(x,y)\,d\mu(x)
\le
C L_f^p \sup_{y\in M}\int_M \dist(x,y)^{p-n-sp}\,d\mu(x)
<\infty,
\]
since $p-sp>0$.

Apply \eqref{eq4.45} with $f=f_{i,\sigma}$, sum over $\sigma\in\{+,-\}$, and use
$|f_{i,+}|^p+|f_{i,-}|^p=|f_i|^p$ to deduce
\[
\|f_i u\|_{L^{p_s^*}(M)}^p
\le
2^{-sp/n}\bigl(K(n,s,p)+\varepsilon_1\bigr)(1+\delta)^{p-1}
\iint_{M\times M} |f_i(x)|^p|u(x)-u(y)|^p \Kps(x,y)\,d\mu(x)d\mu(y)
+ \widetilde B_i\int_M |u|^p d\mu.
\]
Summing over $i=1,\ldots,N$ and using $\sum_i |f_i|^p\equiv 1$ yields
\begin{equation}\label{eq4.46}
\sum_{i=1}^N \|f_i u\|_{L^{p_s^*}(M)}^p
\le
2^{-sp/n}\bigl(K(n,s,p)+\varepsilon_1\bigr)(1+\delta)^{p-1}
\iint_{M\times M} |u(x)-u(y)|^p \Kps(x,y)\,d\mu(x)d\mu(y)
+ B_0\int_M |u|^p d\mu.
\end{equation}

Since $p_s^*>p$, the exponent $\frac{p_s^*}{p}>1$ and the triangle inequality in $L^{p_s^*/p}(M)$ gives
\[
\|u\|_{L^{p_s^*}(M)}^p
=\||u|^p\|_{L^{p_s^*/p}(M)}
=\Bigl\|\sum_{i=1}^N |f_i|^p|u|^p\Bigr\|_{L^{p_s^*/p}(M)}
\le
\sum_{i=1}^N \||f_i|^p|u|^p\|_{L^{p_s^*/p}(M)}
=
\sum_{i=1}^N \|f_i u\|_{L^{p_s^*}(M)}^p.
\]
Combining this with \eqref{eq4.46} we arrive at
\[
\|u\|_{L^{p_s^*}(M)}^p
\le
2^{-sp/n}\bigl(K(n,s,p)+\varepsilon_1\bigr)(1+\delta)^{p-1}
\iint_{M\times M} |u(x)-u(y)|^p \Kps(x,y)\,d\mu(x)d\mu(y)
+ B_0\int_M |u|^p d\mu.
\]

Finally choose $\varepsilon_1>0$ and $\delta\in(0,1)$ so small that
\[
2^{-sp/n}\bigl(K(n,s,p)+\varepsilon_1\bigr)(1+\delta)^{p-1}
\le
\frac{K(n,s,p)}{2^{sp/n}}+\varepsilon.
\]
Absorbing constants into a new $B$ yields \eqref{eq4.42}.
\end{proof}

\section*{Acknowledgments}
Z. Yang is supported by National Natural Science Foundation of China (12301145, 12261107, 12561020) and Yunnan Fundamental Research Projects (202301AU070144, 202401AU070123).

\medskip
{\bf Data availability:}  Data sharing is not applicable to this article as no new data were created or analyzed in this study.

\medskip
{\bf Conflict of Interests:} The Author declares that there is no conflict of interest.

\bibliographystyle{plain}
\bibliography{ref}

\end{document}